\title{Classification of continuously transitive circle groups}
\author{James Giblin}
\address{Mathematics Institute\\University of Warwick\\\newline
Coventry, CV4 7AL\\UK}
\email{giblin@maths.warwick.ac.uk}
\urladdr{http://www.maths.warwick.ac.uk/~giblin/}
\author{Vladimir Markovic}
\email{markovic@maths.warwick.ac.uk}
\urladdr{http://www.maths.warwick.ac.uk/~markovic/}
\def\notin{\not\in}}
\def\cnewtheorem#1[#2]#3{\newtheorem{#1}{#3}[section]
\expandafter\let\csname c@#1\endcsname\c@thm}
\newtheorem{thm}{Theorem}[section]
\theoremstyle{definition}
\newcommand{\bb}{\mathbb}
\renewcommand{\S}{$\bb{S}^{1}$}
\renewcommand{\H}{$\mathrm{Homeo}(\bb{S}^{1})$}
\begin{document}

\begin{asciiabstract}
Let G be a closed transitive subgroup of Homeo(S^1) which contains a
non-constant continuous path f: [0,1] --> G.  We show that up to
conjugation G is one of the following groups: SO(2,R), PSL(2,R),
PSL_k(2,R), Homeo_k(S^1), Homeo(S^1).  This verifies the
classification suggested by Ghys [Enseign. Math. 47 (2001) 329-407].
As a corollary we show that the group PSL(2,R) is a maximal closed
subgroup of Homeo(S^1) (we understand this is a conjecture of de la
Harpe).  We also show that if such a group G < Homeo(S^1) acts
continuously transitively on k-tuples of points, k>3, then the closure
of G is Homeo(S^1) (cf Bestvina's collection of `Questions in geometric
group theory').

\end{asciiabstract}

\begin{htmlabstract}
Let G be a closed transitive subgroup of
Homeo(<b>S</b><sup>1</sup>) which contains a non-constant
continuous path f:[0,1]&rarr;G.  We show that up to
conjugation G is one of the following groups:
SO(2,<b>R</b>), PSL(2,<b>R</b>),
PSL<sub>k</sub>(2,<b>R</b>), Homeo<sub>k</sub>(<b>S</b><sup>1</sup>),
Homeo(<b>S</b><sup>1</sup>).  This verifies the classification
suggested by Ghys in [Enseign. Math. 47 (2001) 329-407].  As a
corollary we show that the group PSL(2,<b>R</b>) is a maximal
closed subgroup of Homeo(<b>S</b><sup>1</sup>) (we understand this
is a conjecture of de la Harpe).  We also show that if such a group
G&lt;Homeo(<b>S</b><sup>1</sup>) acts continuously transitively on
k&ndash;tuples of points, k&gt;3, then the closure of G is
Homeo(<b>S</b><sup>1</sup>) (cf Bestvina's collection of `Questions
in geometric group theory').
\end{htmlabstract}

\begin{webabstract}
Let $G$ be a closed transitive subgroup of
$\mathrm{Homeo}(\mathbb{S}^1)$ which contains a non-constant
continuous path $f\colon\thinspace [0,1]\to G$.  We show that up to
conjugation $G$ is one of the following groups:
$\mathrm{SO}(2,\mathbb{R})$, $\mathrm{PSL}(2,\mathbb{R})$,
$\mathrm{PSL}_{k}(2,\mathbb{R})$, $\mathrm{Homeo}_{k}(\mathbb{S}^{1})$,
$\mathrm{Homeo}(\mathbb{S}^1)$.  This verifies the classification
suggested by Ghys in [Enseign. Math. 47 (2001) 329-407].  As a
corollary we show that the group $\mathrm{PSL}(2,\mathbb{R})$ is a maximal
closed subgroup of $\mathrm{Homeo}(\mathbb{S}^1)$ (we understand this
is a conjecture of de la Harpe).  We also show that if such a group
$G<\mathrm{Homeo}(\mathbb{S}^1)$ acts continuously transitively on
$k$--tuples of points, $k>3$, then the closure of $G$ is
$\mathrm{Homeo}(\mathbb{S}^1)$ (cf Bestvina's collection of `Questions
in geometric group theory').
\end{webabstract}

\begin{abstract}
Let $G$ be a closed transitive subgroup of
$\mathrm{Homeo}(\mathbb{S}^1)$ which contains a non-constant
continuous path $f\colon\thinspace [0,1]\to G$.  We show that up to
conjugation $G$ is one of the following groups:
$\mathrm{SO}(2,\mathbb{R})$, $\mathrm{PSL}(2,\mathbb{R})$,
$\mathrm{PSL}_{k}(2,\bb{R})$, $\mathrm{Homeo}_{k}(\bb{S}^{1})$,
$\mathrm{Homeo}(\mathbb{S}^1)$.  This verifies the classification
suggested by Ghys in \cite{GH}.  As a corollary we show that the group
$\mathrm{PSL}(2,\bb{R})$ is a maximal closed subgroup of
$\mathrm{Homeo}(\mathbb{S}^1)$ (we understand this is a conjecture of
de la Harpe).  We also show that if such a group
$G<\mathrm{Homeo}(\mathbb{S}^1)$ acts continuously transitively on
$k$--tuples of points, $k>3$, then the closure of $G$ is
$\mathrm{Homeo}(\mathbb{S}^1)$ (cf \cite{BE}).
\end{abstract}

\maketitle

\section{Introduction}
Let \H denote the group of orientation preserving homeomorphisms
of $\bb{S}^{1}$ which we endow with the uniform topology. Let $G$
be a subgroup of \H with the topology induced from \H. We say that $G$ is transitive if
for every two points $x,y \in \bb{S}^{1}$, there exists a map $f\in G$, such that $f(x)=y$.
We say that a group $G$ is closed if it is closed in the topology of \H.
A continuous path in $G$ is a continuous map $f\co[0,1]\to G$.

Let $\mathrm{SO}(2,\bb{R})$ denote the group of rotations of
$\bb{S}^{1}$ and $\mathrm{PSL}(2,\bb{R})$ the group of M\"obius
transformations.
The first main result we prove describes transitive subgroups of \H that contain a non constant continuous path.

\begin{thm}\label{thm:one}
Let $G$ be a transitive subgroup of \H which contains a non
constant continuous path. Then one of the following mutually
exclusive possibilities holds:
\begin{enumerate}
\item $G$ is conjugate to $\mathrm{SO}(2,\bb{R})$ in \H.
\item $G$ is conjugate to $\mathrm{PSL}(2,\bb{R})$ in \H.
\item For every $f\in$ \H and each finite set of points $x_{1},\dots,x_{n}\in\bb{S}^{1}$ there exists $g\in G$ such that $g(x_{i})=f(x_{i})$ for each $i$.
\item $G$ is a cyclic cover of a conjugate of $\mathrm{PSL}(2,\bb{R})$ in \H and hence conjugate to $\mathrm{PSL}_{k}(2,\bb{R})$ for some
$k>1$.
\item $G$ is a cyclic cover of a group satisfying condition 3 above.
\end{enumerate}
\end{thm}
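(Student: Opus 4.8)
The plan is to split according to two invariants of $G$: its centralizer in $\mathrm{Homeo}(\mathbb{S}^1)$, which detects the cyclic covers, and whether $G$ enjoys the convergence property, which separates the $\mathrm{PSL}(2,\mathbb{R})$--type groups from the dense ones. First I would record the single structural consequence of the path hypothesis: since $f\colon [0,1]\to G$ is continuous and non-constant, there is a parameter $s$ and a sequence $s_n\to s$ with $f(s_n)\neq f(s)$, so the elements $f(s_n)f(s)^{-1}\in G$ are non-trivial and converge to the identity. Thus $G$ contains non-trivial elements arbitrarily close to $\id$; in particular $\overline{G}$ is non-discrete. This is essentially the only way the path hypothesis enters, and it is what rules out exotic totally disconnected transitive groups.

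Next I would analyse the centralizer $Z$ of $G$ in $\mathrm{Homeo}(\mathbb{S}^1)$. If $z\in Z$ fixes a point $p$, then for any $q$ pick $g\in G$ with $g(p)=q$; since $z$ commutes with $g$ one gets $z(q)=q$, so $z=\id$. Hence $Z$ acts freely, and a closed freely acting subgroup of $\mathrm{Homeo}(\mathbb{S}^1)$ is conjugate to a closed subgroup of $\mathrm{SO}(2,\mathbb{R})$, so $Z$ is trivial, a finite cyclic group $\mathbb{Z}/k$, or all of $\mathrm{SO}(2,\mathbb{R})$. If $Z\cong\mathrm{SO}(2,\mathbb{R})$ then $G$ lies in the centralizer of a rotation circle, which is that circle, and transitivity forces $G=\mathrm{SO}(2,\mathbb{R})$ up to conjugacy (case 1). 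If $Z\cong\mathbb{Z}/k$ with $k>1$, conjugate $Z$ to the rotations by multiples of $2\pi/k$; then $G$ commutes with the order-$k$ deck transformation of the $k$--fold cover $\mathbb{S}^1\to\mathbb{S}^1$, so $G$ is a cyclic cover of the quotient group $H:=G/Z$ acting on the quotient circle, and $H$ has trivial centralizer and inherits a non-constant path. It then suffices to treat the case $Z$ trivial: the outcomes for $H$ (cases 2 and 3) lift to cases 4 and 5 for $G$.

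Assume now $Z$ is trivial, and invoke the convergence dichotomy. If $\overline{G}$ satisfies the convergence property, the convergence group theorem of Gabai, Casson--Jungreis and Tukia shows $\overline{G}$ is topologically conjugate to a subgroup of $\mathrm{PSL}(2,\mathbb{R})$; being transitive and closed with $Z(\overline{G})=Z(G)$ trivial, it is conjugate to $\mathrm{PSL}(2,\mathbb{R})$ itself (the rotation circles, the only other transitive closed subgroups, have non-trivial centralizer). A further argument from the path --- the subgroup it generates is a positive-dimensional connected Lie subgroup by Yamabe's theorem on arcwise connected subgroups --- then shows that $G$ itself is closed, giving case 2. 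If instead $\overline{G}$ fails the convergence property, there is a sequence of distinct $g_n\in G$ and genuine triples $\tau_n\to\tau$, $g_n\tau_n\to\tau'$ admitting no collapsing subsequence. Normalising by elements returning $\tau'$ to $\tau$ and passing to a limit, one extracts a homeomorphism in $\overline{G}$ that is the identity on a non-degenerate arc but not globally --- a ``partial identity''. Sliding its support around the circle by transitivity and combining it with the near-identity elements from the path, I would bootstrap an arbitrarily fine supply of locally supported homeomorphisms and realise any prescribed finite matching, which is exactly condition 3.

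The main obstacle is this last step: producing a non-trivial partial identity from the failure of the convergence property and then bootstrapping it to full condition 3. Extracting the partial identity requires care in the normalisation so that the limit is a genuine homeomorphism on an arc rather than a constant map, and the bootstrapping requires showing that locally supported elements with movable, shrinkable supports generate enough of $\mathrm{Homeo}(\mathbb{S}^1)$ to match any homeomorphism on any finite set; the non-discreteness supplied by the path is essential both to keep the limits non-trivial and to shrink supports continuously. The remaining book-keeping, namely mutual exclusivity of the five cases, follows from the invariants used to organise the split: the isomorphism type of $Z$ separates the covers, and the convergence property separates case 2 from case 3.
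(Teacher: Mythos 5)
Your overall architecture (split by the centralizer $Z$ of $G$ in $\mathrm{Homeo}(\bb{S}^{1})$ to detect cyclic covers, then a convergence/non-convergence dichotomy) is genuinely different from the paper's, which never looks at $\overline{G}$ at all: the paper builds a theory of \emph{continuous $n$--transitivity} (the sets $J_{x}$ and $J_{x_{1}\dots x_{n}}$, the induction from continuous $4$--transitivity upward, and the analysis of point stabilizers as translation/affine actions) entirely inside $G$. Unfortunately, your non-convergence branch has a structural gap exactly where the paper's machinery does its work. Every object you extract there --- the ``partial identity'' and the locally supported homeomorphisms obtained by normalising and passing to limits --- lies in $\overline{G}$, not in $G$, and products of such elements with near-identity elements of $G$ still lie only in $\overline{G}$. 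So the most this branch can prove is that $\overline{G}=\mathrm{Homeo}(\bb{S}^{1})$, i.e.\ that $G$ is dense. But condition 3 demands \emph{exact} matching on finite sets by elements of $G$ itself, which is strictly stronger than density: Thompson's group $T$ is dense in $\mathrm{Homeo}(\bb{S}^{1})$ yet maps dyadic rationals to dyadic rationals, so it matches nothing exactly off that set. You cannot argue that density plus transitivity plus a path upgrades to exact matching, because that implication is precisely (part of) the theorem being proved. The paper's proof avoids this trap by constructing the matching elements as values of paths in $G$ (Lemma \ref{lem:equ}, Propositions \ref{prop:ind}, \ref{prop:ttt}, Theorem \ref{thm:ncts}), never taking limits; your proposal has no substitute for this, and it is the heart of the result.

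Two secondary problems. First, your lemma ``a closed freely acting subgroup of $\mathrm{Homeo}(\bb{S}^{1})$ is conjugate to a closed subgroup of $\mathrm{SO}(2,\bb{R})$'' is false as stated: the cyclic group generated by a Denjoy counterexample acts freely and is closed (its iterates shrink every wandering interval, so no subsequence of distinct iterates can converge to a homeomorphism), but it is not conjugate into the rotation group. For the centralizer of a \emph{transitive} $G$ this is repairable --- a Denjoy-like element of $Z$ has a canonical minimal Cantor set which $G$ would have to preserve, contradicting transitivity, and a freely acting element of rational rotation number has finite order --- but the repair must use transitivity of $G$, not the false general statement. Relatedly, your claim that the quotient $H=G/Z$ automatically has trivial centralizer is unjustified (one can instead iterate the covering construction, since finite coverings of the circle compose to cyclic coverings). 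Second, in the convergence branch the step from $\overline{G}$ conjugate to $\mathrm{PSL}(2,\bb{R})$ to $G$ itself equal to $\mathrm{PSL}(2,\bb{R})$ needs more than a gesture at Yamabe: you need that the arcwise connected subgroup generated by the path is a positive-dimensional Lie subgroup and that it together with a suitable $G$--conjugate of itself generates the full Lie algebra $\mathfrak{sl}(2,\bb{R})$. That gap is fillable; the gap in the dense branch is not, without rebuilding something like the paper's continuous transitivity apparatus.
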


Here we write $\mathrm{PSL}_{k}(2,\bb{R})$ and $\mathrm{Homeo}_{k}(\bb{S}^{1})$ to denote the cyclic covers of the groups
$\mathrm{PSL}(2,\bb{R})$ and $\mathrm{Homeo}(\bb{S}^{1})$ respectively, for some $k \in \bb{N}$.

The proof begins by showing that the assumptions of the theorem imply that $G$ is continuously 1--transitive. This means that if we vary points $x,y\in\bb{S}^{1}$ in a continuous fashion, then we can choose corresponding elements of $G$ which map $x$ to $y$ that also vary in a continuous fashion. In Theorems \ref{thm:so} and \ref{thm:nempty} we show that this leads us to two possibilities, either $G$ is conjugate to $\mathrm{SO}(2,\bb{R})$, or $G$ is a cyclic cover of a group which is continuously $2$--transitive.

We then analyse groups which are continuously 2--transitive and show that they are infact all continuously 3--transitive. Furthermore, if such a group is not continuously 4--transitive, we show that it is a convergence group and hence conjugate to $\mathrm{PSL}(2,\bb{R})$. On the other hand if it is continuously 4--transitive, then we use an induction argument to show that it is continuously $n$--transitive for all $n\geq 4$. This implies that for every $f\in$ \H and each finite set of points $x_{1},\dots,x_{n}\in\bb{S}^{1}$ there exists a group element $g$ such that $g(x_{i})=f(x_{i})$ for each $i$.

The remaining possibilities, namely cases 2 and 3, arise when the aforementioned cyclic cover is trivial.

In the case where the group $G$ is also closed we can use \fullref{thm:one} to make the following classification.

\begin{thm}
Let $G$ be a closed transitive subgroup of \H which contains a non
constant continuous path. Then one of the following mutually
exclusive possibilities holds:
\begin{enumerate}
\item{$G$ is conjugate to $\mathrm{SO}(2,\bb{R})$ in \H}.
\item{$G$ is conjugate to $\mathrm{PSL}_{k}(2,\bb{R})$ in \H for some $k\geq 1$}.
\item{$G$ is conjugate to $\mathrm{Homeo}_{k}(\bb{S}^{1})$ in \H for some $k\geq 1$}.
\end{enumerate}
\end{thm}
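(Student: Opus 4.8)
The plan is to derive this statement from \fullref{thm:one} by showing that the extra hypothesis that $G$ is closed collapses the five alternatives of that theorem onto the three listed here. Applying \fullref{thm:one} to $G$, we land in one of its five cases. Cases (1), (2) and (4) already assert that $G$ is conjugate to $\mathrm{SO}(2,\bb{R})$, to $\mathrm{PSL}(2,\bb{R})=\mathrm{PSL}_{1}(2,\bb{R})$, or to $\mathrm{PSL}_{k}(2,\bb{R})$ with $k>1$; since these model groups are themselves closed subgroups of \H and conjugation is a homeomorphism of \H, these cases land directly in the present conclusions (1) and (2), the latter with $k\geq 1$. All of the genuine work therefore lies in cases (3) and (5), where the available information is the finite interpolation property of condition (3). (Mutual exclusivity of the three resulting classes is inherited from that of \fullref{thm:one}, the three model groups being pairwise non-conjugate.)

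The heart of the matter is the following density lemma: a closed subgroup $G$ of \H with the interpolation property of condition (3) must equal \H. To prove it, fix $f\in$ \H and $\varepsilon>0$, and choose a cyclically ordered finite set $x_{1},\dots,x_{n}\in\bb{S}^{1}$ fine enough that each image arc $[f(x_{i}),f(x_{i+1})]$ has diameter less than $\varepsilon$; this is possible by uniform continuity of $f$. By condition (3) there is $g\in G$ with $g(x_{i})=f(x_{i})$ for every $i$. Since both $f$ and $g$ preserve the cyclic order, each maps the arc $[x_{i},x_{i+1}]$ onto the same arc $[f(x_{i}),f(x_{i+1})]$, so for every $x$ the points $f(x)$ and $g(x)$ lie in a common arc of diameter less than $\varepsilon$. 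Hence $d(f,g)<\varepsilon$ in the uniform metric, $G$ is dense in \H, and being closed it equals \H, which is $\mathrm{Homeo}_{1}(\bb{S}^{1})$. This settles case (3).

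For case (5), $G$ is a cyclic cover of a group $H$ satisfying condition (3). I would write $\pi\co G\to H$ for the covering homomorphism induced by the degree-$k$ map $\bb{S}^{1}\to\bb{S}^{1}$ that quotients by the order-$k$ deck rotation, observe that $G$ contains this deck rotation and is therefore saturated under the finite kernel $\ker\pi\cong\bb{Z}/k$, and then use the fact that the quotient by a finite (hence compact) group is a closed map. Consequently $H=\pi(G)$ is a closed subgroup of the homeomorphism group of the base circle and inherits property (3) from $G$. The density lemma then forces $H=\mathrm{Homeo}(\bb{S}^{1})$, whence $G=\pi^{-1}(H)$ is the full cyclic cover $\mathrm{Homeo}_{k}(\bb{S}^{1})$, giving conclusion (3) with $k>1$. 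Combined with case (3), this yields all $k\geq 1$.

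I expect the main obstacle to be the transfer of closedness through the covering in case (5): one must confirm that $\pi(G)$ is closed for the uniform topology, and for this it is essential to know that $G$ is genuinely the full preimage of $H$, that is, that it contains the entire deck group, rather than a proper closed subgroup whose projection is merely dense. Verifying this saturation, and checking that the covering homomorphism is indeed a closed map for the uniform topologies on the two circles, is the delicate point; the density lemma itself is, by contrast, a soft approximation argument.
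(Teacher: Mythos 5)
You follow essentially the same route as the paper: apply \fullref{thm:one} (restated as \fullref{thm:cat}), observe that its cases (1), (2), (4) give the conclusion immediately, and dispose of the interpolation cases by a density argument. Your density lemma is the same argument the paper gives, except that the paper works with a countable dense set $\{a_{n}\}$ and simply asserts that the interpolating sequence $g_{n}$ converges uniformly to $f$; your $\varepsilon$--argument via preservation of cyclic order is exactly the missing justification (one small point: the paper's metric $d_{G}$ also controls inverses, so either take the partition fine enough that both the arcs $[x_{i},x_{i+1}]$ and their $f$--images have diameter less than $\varepsilon$, or invoke the standard fact that uniform convergence of circle homeomorphisms to a homeomorphism forces uniform convergence of the inverses). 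As for case (5), the paper's proof of this theorem is silent --- it explicitly treats only case (3) --- so your covering argument fills in what the paper leaves implicit rather than diverging from it. The ``delicate point'' you flag is not actually an obstacle within the paper's framework: \fullref{thm:nempty}, which is where the cyclic cover in \fullref{thm:one} comes from, states that the covering transformations of $G\to G_{\Gamma}$ are the full cyclic group $\langle R\rangle$, i.e.\ $\ker(\pi_{\Gamma}|_{G})=\langle R\rangle$, so $G$ contains the deck rotation by hypothesis and you may cite this rather than prove it. Granting that, the rest of your case (5) is sound: $\pi_{\Gamma}^{-1}(\pi_{\Gamma}(G))=\bigcup_{j=0}^{k-1}G\circ R^{j}$ is a finite union of closed sets, and since the projection from the centralizer of $R$ onto $\mathrm{Homeo}(\bb{S}^{1}/\Gamma)$ is an open surjection (hence a quotient map), $\pi_{\Gamma}(G)$ is closed; your density lemma then gives $\pi_{\Gamma}(G)=\mathrm{Homeo}(\bb{S}^{1}/\Gamma)$, and saturation gives $G=\pi_{\Gamma}^{-1}(\mathrm{Homeo}(\bb{S}^{1}/\Gamma))$, which is conjugate to $\mathrm{Homeo}_{k}(\bb{S}^{1})$ because $R$ is conjugate to a finite order rotation.
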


The above theorem provides the classification of closed, transitive subgroups of \H that contain a non-trivial continuous path. This classification was suggested by Ghys for all transitive and closed subgroups of \H (See \cite{GH}).

One well known problem in the theory of circle groups is to prove that the group of M\"obius transformations is a maximal
closed subgroup of \H. We understand that this is a conjecture of  de la Harpe (see \cite{BE}). The following theorem follows directly from our work and answers this question.

\begin{thm}
$\mathrm{PSL}(2,\bb{R})$ is a maximal closed subgroup of \H.
\end{thm}

In the following five sections we develop the techniques needed to prove our results. Here we prove the results about the transitivity
on $k$--tuples of points. In Section 7 we give the proofs of all the main results stated above.

\section{Continuous Transitivity}

Let $G<\mathrm{Homeo}(\bb{S}^{1})$ be a transitive group of
orientation preserving homeomorphisms of $\bb{S}^{1}$. We begin
with some definitions which generalize the notion of transitivity.

Set,
$$
P_{n}=\{(x_{1},\dots,x_{n}):x_{i}\in\bb{S}^{1},x_{i}=x_{j}\iff
i=j\}
$$
to be the set of distinct $n$--tuples of points in \S. Two $n$--tuples
$$
(x_{1},\dots,x_{n}),(y_{1},\dots,y_{n})\in P_{n}
$$
have matching orientations if there exists $f\in$ \H such that
$f(x_{i})=y_{i}$ for each $i$.

\begin{defn}
$G$ is $n$--transitive if for every pair
$(x_{1},\dots,x_{n}),(y_{1},\dots,y_{n})\in P_{n}$ with matching
orientations there exists $g\in G$ such that $g(x_{i})=y_{i}$ for
each $i$.
\end{defn}

\begin{defn}
$G$ is uniquely $n$--transitive if it is $n$--transitive
and for each pair $(x_{1},\dots,x_{n}),(y_{1},\dots,y_{n})\in
P_{n}$ with matching orientations there is exactly one element
$g\in G$ such that $g(x_{i})=y_{i}$. Equivalently, the only
element of $G$ fixing $n$ distinct points is the identity.
\end{defn}

Endow $\bb{S}^{1}$ with the standard topology and $P_{n}$ with the topology it
inherits as a subspace of the $n$--fold Cartesian product
$\bb{S}^{1}\times\cdots\times\bb{S}^{1}$. These are metric
topologies. With the topology on $P_{n}$ being induced by the distance function
$$
d_{P_{n}}((x_{1},\dots,x_{n}),(y_{1},\dots,y_{n}))=\max\{d_{\bb{S}^{1}}(x_{i},y_{i}):i=1,\dots,n\},
$$
where $d_{\bb{S}^{1}}$ is the standard Euclidean distance function on $\bb{S}^{1}$.

Endow $G$ with the uniform topology. This is also a metric topology, induced by the
distance function,
$$
d_{G}(g_{1},g_{2})=\sup\{\max\{d_{\bb{S}^{1}}(g_{1}(x),g_{2}(x)),d_{\bb{S}^{1}}(g_{1}^{-1}(x),g_{2}^{-1}(x))\}:x\in\bb{S}^{1}\}
$$

A path in a topological space $X$ is a continuous map
$\gamma\co[0,1]\to X$. If $\mathcal{X}\co[0,1]\to P_{n}$ is a path in
$P_{n}$ we will write $x_{i}(t)=\pi_{i}\circ \mathcal{X}(t)$,
where $\pi_{i}$ is projection onto the $i$--th component of
$\bb{S}^{1}\times\cdots\times\bb{S}^{1}$, so that we can write
$\mathcal{X}(t)=(x_{1}(t),\cdots,x_{n}(t))$. We will call a pair
of paths $\mathcal{X},\mathcal{Y}\co[0,1]\to P_{n}$
\emph{compatible} if there exists a path
$h\co[0,1]\to\mathrm{Homeo}(\bb{S}^{1})$ with
$h(t)(x_{i}(t))=y_{i}(t)$ for each $i$ and $t$.

\begin{defn}
$G$ is continuously $n$--transitive if for every
compatible pair of paths $\mathcal{X},\mathcal{Y}\co[0,1]\to P_{n}$
there exists a path $g\co[0,1]\to G$ with the property that
$g(t)(x_{i}(t))=y_{i}(t)$ for each $i$ and $t$.
\end{defn}

\begin{defn}
A continuous deformation of the identity in $G$ is a non
constant path of homeomorphisms $f_{t}\in G$ for $t\in[0,1]$ with
$f_{0}=\id$.
\end{defn}

We have the following lemma.

\begin{lem}\label{lem:equ}
For $n\geq 2$ the following are equivalent:
\begin{enumerate}
\item{$G$ is continuously $n$--transitive}.
\item{$G$ is continuously $n-1$--transitive and the following holds. 
For every $n-1$--tuple $(a_{1},\dots,a_{n-1})\in P_{n-1}$ and $x\in\bb{S}^{1}\setminus\{a_{1},\dots,a_{n-1}\}$ there exists
a continuous map $F_{x}\co I_{x}\to G$ satisfying the following
conditions,
\begin{enumerate}
\item{$F_{x}(y)$ fixes $a_{1},\dots,a_{n-1}$ for all $y\in I_{x}$}
\item{$(F_{x}(y))(x)=y$ for all $y\in I_{x}$}
\item{$F_{x}(x)=\id$}
\end{enumerate}\vspace{-5pt}
where $I_{x}$ is the component of $\bb{S}^{1}\setminus\{a_{1},\dots,a_{n-1}\}$ containing $x$.}
\item{$G$ is continuously $n-1$--transitive and there exists $(a_{1},\dots,a_{n-1})\in P_{n-1}$ with the following property.
There is a component $I$ of
$\bb{S}^{1}\setminus\{a_{1},\dots,a_{n-1}\}$, a point $\tilde{x}\in I$ and
a continuous map $F_{\tilde{x}}\co I\to G$ satisfying the following
conditions,
\begin{enumerate}
\item{$F_{\tilde{x}}(y)$ fixes $a_{1},\dots,a_{n-1}$ for all $y\in I$}
\item{$(F_{\tilde{x}}(y))(\tilde{x})=y$ for all $y\in I$}
\item{$F_{\tilde{x}}(\tilde{x})=\id$.}
\end{enumerate}}
\item{$G$ is continuously $n-1$--transitive and there exists $(a_{1},\dots,a_{n-1})\in P_{n-1}$ with the following property. There is a component $I$ of $\bb{S}^{1}\setminus\{a_{1},\dots,a_{n-1}\}$, such that for each $x\in I$ there exists a continuous deformation of the identity $f_{t}$, satisfying $f_{t}(a_{i})=a_{i}$ for each $t$ and $i$ and $f_{t}(x)\neq x$ for some $t$.}
\end{enumerate}
\end{lem}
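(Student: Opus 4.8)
The plan is to establish the cycle of implications $(1)\Rightarrow(2)\Rightarrow(3)\Rightarrow(4)\Rightarrow(1)$, the last being the substantial one.

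For $(1)\Rightarrow(2)$ I would first note that continuous $n$--transitivity implies continuous $(n-1)$--transitivity: given a compatible pair $\mathcal{X}',\mathcal{Y}'\co[0,1]\to P_{n-1}$ with witnessing ambient path $h(t)$, choose a continuous $x_{n}(t)$ distinct from the other coordinates and set $y_{n}(t)=h(t)(x_{n}(t))$; the extended paths into $P_{n}$ are compatible via the same $h$, so applying $(1)$ and forgetting the last coordinate gives the required path. For the map $F_{x}$, fix $(a_{1},\dots,a_{n-1})$ and a component $I_{x}$, parametrize $I_{x}$ by a homeomorphism $\gamma$ from an interval with $\gamma(t_{0})=x$, and feed the constant path $(a_{1},\dots,a_{n-1},x)$ and the path $(a_{1},\dots,a_{n-1},\gamma(t))$ into $(1)$ (these are compatible, since one can push $x$ along $I_{x}$ while fixing the $a_{i}$). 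This yields $g(t)\in G$; the normalization $\tilde g(t)=g(t)\circ g(t_{0})^{-1}$ (note $g(t_{0})$ fixes $a_{1},\dots,a_{n-1}$ and $x$) gives $F_{x}:=\tilde g\circ\gamma^{-1}$, which fixes the $a_{i}$, sends $x\mapsto y$, and satisfies $F_{x}(x)=\id$.

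The step $(2)\Rightarrow(3)$ is immediate specialization of the universal statement. For $(3)\Rightarrow(4)$, given $x\in I$ an orientation argument shows $F_{\tilde x}(y)(x)\neq x$ for some $y\in I$: if $F_{\tilde x}(y)$ fixed $x$ for every $y$, then, being orientation preserving and fixing the $a_{i}$ and $x$, it would map each sub-arc of $I\setminus\{x\}$ to itself, forcing $y=F_{\tilde x}(y)(\tilde x)$ to remain on $\tilde x$'s side of $x$, contradicting that $y$ ranges over all of $I$. Then $f_{t}:=F_{\tilde x}(\gamma(t))$, along a path $\gamma$ in $I$ from $\tilde x$ to such a $y$, is a continuous deformation of the identity fixing the $a_{i}$ with $f_{t}(x)\neq x$ for some $t$.

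The heart is $(4)\Rightarrow(1)$. Given a compatible pair $\mathcal{X},\mathcal{Y}\co[0,1]\to P_{n}$, I would first use continuous $(n-1)$--transitivity to produce $g_{1}(t)\in G$ matching the first $n-1$ coordinates, so that it remains to find a path $g_{2}(t)\in G$ fixing $y_{1}(t),\dots,y_{n-1}(t)$ and carrying $w(t):=g_{1}(t)(x_{n}(t))$ to $y_{n}(t)$; matching orientations guarantee $w(t)$ and $y_{n}(t)$ lie in a common component. Conjugating by a continuous $k(t)\in G$ (again from continuous $(n-1)$--transitivity) that moves the fixed base tuple $(a_{1},\dots,a_{n-1})$ onto $(y_{1}(t),\dots,y_{n-1}(t))$ reduces everything to a single statement: the stabilizer $G_{a}=\{g\in G:g(a_{i})=a_{i}\ \forall i\}$ acts continuously transitively on the component $I$. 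To prove transitivity I would argue that the path-component $G_{a}^{0}$ of the identity is a subgroup acting on $I$ by orientation-preserving homeomorphisms, so each orbit is path-connected, hence a subinterval; by $(4)$ each orbit has at least two points, and transitivity on an orbit forbids an attained maximum or minimum (an order-preserving self-bijection fixes an attained extremum, whose orbit would then be a single point); thus each orbit is open, and since the orbits partition the connected set $I$ there is exactly one, so $G_{a}^{0}$ is transitive on $I$.

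The main obstacle is upgrading this transitivity to a \emph{continuous} selection, i.e.\ building a global continuous $F_{x_{0}}\co I\to G_{a}^{0}$ with $F_{x_{0}}(y)(x_{0})=y$ and $F_{x_{0}}(x_{0})=\id$ from the merely pointwise deformation data of $(4)$. I would exploit order-preservation: once transitivity is known, a path in $G_{a}^{0}$ moving $x_{0}$ across a subarc can be reparametrized monotonically into a genuine continuous section of the evaluation map $g\mapsto g(x_{0})$ over that subarc, and because $I$ is an interval these local sections patch (correcting by group elements on overlaps), the normalization $F_{x_{0}}(x_{0})=\id$ removing the remaining ambiguity. Setting $F(x,y):=F_{x_{0}}(y)\circ F_{x_{0}}(x)^{-1}$ then gives an element of $G_{a}$ depending continuously on $(x,y)$ and carrying $x$ to $y$; composing with $k(t)$ and $g_{1}(t)$ produces the path required for continuous $n$--transitivity, closing the cycle.
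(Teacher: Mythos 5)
Your cycle $(1)\Rightarrow(2)\Rightarrow(3)\Rightarrow(4)\Rightarrow(1)$ runs opposite to the paper's ($1\Rightarrow4$, $4\Rightarrow3$, $3\Rightarrow2$, $2\Rightarrow1$), so the entire content of the lemma is concentrated in your step $(4)\Rightarrow(1)$. Much of that step is sound: the reduction via $g_{1}(t)$ and a conjugating path $k(t)$ is essentially the paper's $2\Rightarrow1$ argument, and your orbit argument for transitivity of the identity path-component $G_{a}^{0}$ on $I$ (orbits are intervals, an attained extremum would be fixed by every element and hence have singleton orbit, so orbits are open and partition $I$) is correct and in fact cleaner than the paper's corresponding argument with the sets $K_{\tilde{x}}$. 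Two smaller points you gloss over: in $(1)\Rightarrow(2)$ the component $I_{x}$ is an open arc, so a single application of $(1)$ cannot parametrize all of it -- you need the same exhaustion-and-patching you invoke later; and in $(4)\Rightarrow(1)$ condition $(4)$ concerns one particular component $I$ of one particular tuple, so $k(t)$ must be chosen with a cyclic re-indexing of the $a_{i}$ so that $k(t)(I)$ is the component containing both $w(t)$ and $y_{n}(t)$ -- this transport is what the paper carries out in its $3\Rightarrow2$ step.

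The genuine gap is the sentence claiming that a path in $G_{a}^{0}$ moving $x_{0}$ across a subarc ``can be reparametrized monotonically into a genuine continuous section.'' Transitivity gives you, for a target $z$, a path $g_{t}$ in $G_{a}$ with $g_{0}=\id$ and $g_{1}(x_{0})=z$, but its trace $t\mapsto g_{t}(x_{0})$ may backtrack, and no monotone reparametrization of the \emph{time} variable can remove backtracking. Nor can you define the section by hitting times: the first (or last) time the trace passes through $y$ is discontinuous in $y$ wherever the trace reverses direction, and the group elements attached to two different hitting times of the same point need not coincide, because at this stage the action of $G_{a}^{0}$ on $I$ is not known to be free (freeness is established much later in the paper, and only for groups that fail to be continuously 4--transitive). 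What is needed is surgery on the path itself, using the group structure: this is exactly the paper's argument in $4\Rightarrow3$, where one takes the largest subinterval $[x^{*},\tilde{x}]$ over which a path with \emph{simple} trace exists and, if $x^{*}\neq x_{1}$, extends it by concatenating with maps of the form $f_{s+\epsilon}\circ f_{s}^{-1}\circ\bar{f}_{1}$, contradicting maximality. A simple trace is a homeomorphism onto its image, so inverting it yields the continuous local section, and the nested-interval patching you describe then finishes. Without this (or an equivalent) construction, the crucial step of your proof is asserted rather than proved.
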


\begin{proof}
We start by showing $[1\Rightarrow 4]$. As $G$ is continuously $n$--transitive, it will automatically be continuously $n-1$ transitive. Take $(a_{1},\dots,a_{n-1})\in P_{n-1}$ and $x\in\bb{S}^{1}\setminus\{a_{1},\dots,a_{n-1}\}$. Let $I_{x}$ be the component of $\bb{S}^{1}\setminus\{a_{1},\dots,a_{n-1}\}$ which contains $x$. Take $y\in I_{x}\setminus\{x\}$ and let $x_{t}$ be an injective path in $I_{x}$ with $x_{0}=x$ and $x_{1}=y$.

Let $\mathcal{X}\co [0,1]\to P_{n}$ be the constant path defined by $\mathcal{X}(t)=(a_{1},\dots,a_{n-1},x_{0})$ and let $\mathcal{Y}\co [0,1]\to P_{n}$ be the path defined by $\mathcal{Y}(t)=(a_{1},\dots,a_{n-1},x_{t})$. Then since $x_{t}\in I_{x}$ for every time $t$ these form an compatible pair of paths. Consequently, there exists a path $g_{t}\in G$ which fixes each $a_{i}$ and such that $g_{t}(x)=(x_{t})$. Defining $f_{t}=g_{t}\circ (g_{0}^{-1})$ gives us the required continuous deformation of the identity.

We now show that $[4\Rightarrow 3]$. For $\tilde{x}\in I$
 set $K_{\tilde{x}}$ to be the set of points $x\in I$ for which there is a path of homeomorphisms $f_{t}\in G$ satisfying,
\begin{enumerate}
\item{$f_{0}=\id$}
\item{$f_{t}(a_{i})=a_{i}$ for each $i$ and $t$}
\item{$f_{1}(\tilde{x})=x$.}
\end{enumerate}
Obviously, $K_{\tilde{x}}$ will be a connected subset of $I$ and
hence an interval for each $\tilde{x}\in I$.

Choose $\tilde{x}\in I$ and take $x\in K_{\tilde{x}}$. Let $f_{t}$ and $g_{t}$ be
continuous deformations of the identity which
fix the $a_{i}$ for all $t$ and such that
$f_{t_{0}}(x)\neq x$ for some $t_{0}\in(0,1]$ and
$g_{1}(\tilde{x})=x$. $f_{t}$ exists by the assumptions of condition 4. and $g_{t}$ exists because $x\in K_{\tilde{x}}$. The following paths show that the
interval between $f_{t_{0}}(x)$ and $(f_{t_{0}})^{-1}(x)$ is
contained in $K_{\tilde{x}}$:
$$
h_{1}(t)=\left\{\begin{array}{ll} g_{2t} & t\in[0,1/2]\\
f_{t_{0}(2t-1)}\circ g_{1} & t\in[1/2,1] \end{array}\right.
$$
$$
h_{2}(t)=\left\{\begin{array}{ll} g_{2t} & t\in[0,1/2]\\
(f_{t_{0}(2t-1)})^{-1}\circ g_{1} & t\in[1/2,1]
\end{array}\right. 
$$
As $x$ is contained in this interval and cannot be equal to either
of its endpoints we see that $K_{\tilde{x}}$ is open for every
$\tilde{x}\in I$. On the other hand, $\tilde{x}\in K_{\tilde{x}}$
for each $\tilde{x}\in I$ and if $x_{1}\in K_{x_{2}}$ then
$K_{x_{1}}=K_{x_{2}}$. Consequently, the sets
$\{K_{\tilde{x}}:\tilde{x}\in I\}$ form a partition of $I$ and
hence $K_{\tilde{x}}=I$ for every $\tilde{x}\in I$.

We now construct the map $F_{\tilde{x}}$. To do this, take a
nested sequence of intervals $[x_{n},y_{n}]$ containing
$\tilde{x}$ for each $n$ and such that $x_{n},y_{n}$ converge to
the endpoints of $I$ as $n\to\infty$. We define $F_{\tilde{x}}$
inductively on these intervals. Since $K_{\tilde{x}}=I$ we can
find a path of homeomorphisms $f_{t}\in G$ satisfying,
\begin{enumerate}
\item{$f_{0}=\id$}
\item{$f_{t}(a_{i})=a_{i}$ for each $i$ and $t$}
\item{$f_{1}(\tilde{x})=x_{1}$.}
\end{enumerate}
We now show that there exists a path $\bar{f}_{t}\in G$, which
also satisfies the above, but with the additional condition that
the path $\bar{f}_{t}(\tilde{x})$ is simple.

To see this, let $[x^{*},\tilde{x}]$ be the largest subinterval of
$[x_{1},\tilde{x}]$ for which there exists a path
$\bar{f}_{t}\in G$ which satisfies,
\begin{enumerate}
\item{$\bar{f}_{0}=\id$}
\item{$\bar{f}_{t}(a_{i})=a_{i}$ for each $i$ and $t$}
\item{$\bar{f}_{1}(\tilde{x})=x^{*}$}
\item{$\bar{f}_{t}(\tilde{x})$ is simple.}
\end{enumerate}
We want to show that $x^{*}=x_{1}$. Assume for contradiction that
$x^{*}\neq x_{1}$. Then since $x^{*}\in[x_{1},\tilde{x}]$ there
exists $s\in[0,1]$ such that $f_{s}(\tilde{x})=x^{*}$ and for
small $\epsilon>0$, we have that
$f_{s+\epsilon}(\tilde{x})\notin[x^{*},\tilde{x}]$. Then if we
concatenate the path $\bar{f}_{t}$ with $f_{s+\epsilon}\circ
f_{s}^{-1}\circ\bar{f}_{1}$ for small $\epsilon$ we can construct
a simple path satisfying the same conditions as $\bar{f}_{t}$ but
on a interval strictly bigger than $[x^{*},\tilde{x}]$, this
contradicts the maximality of $x^{*}$ and we deduce that
$x^{*}=x_{1}$.

We can use the path $\bar{f}_{t}$ to define a map
$F_{\tilde{x}}^{1}\co [x_{1},y_{1}]\to G$ satisfying,

\begin{enumerate}
\item{$F_{\tilde{x}}^{1}(y)$ fixes each $a_{i}$ for each $y\in I$}
\item{$(F_{\tilde{x}}^{1}(y))(\tilde{x})=y$ for all $y\in I$}
\item{$F_{\tilde{x}}^{1}(\tilde{x})=\id$.}
\end{enumerate}

by taking paths of homeomorphisms that move $\tilde{x}$ to $x_{1}$
and $y_{1}$ along simple paths in $\bb{S}^{1}$.

Now assume we have defined a map
$F_{\tilde{x}}^{k}\co [x_{k},y_{k}]\to G$ satisfying,

\begin{enumerate}
\item{$F_{\tilde{x}}^{k}(y)$ fixes each $a_{i}$ for each $y\in I$}
\item{$(F_{\tilde{x}}^{k}(y))(\tilde{x})=y$ for all $y\in I$}
\item{$F_{\tilde{x}}^{k}(\tilde{x})=\id$.}
\end{enumerate}

We can use the same argument used to produce $F_{\tilde{x}}^{1}$
to show that there exists a map
$\mathfrak{F}_{x_{k}}\co [x_{k+1},x_{k}]\to G$ such that
$\mathfrak{F}_{x_{k}}(x)$ fixes the $a_{i}$ for each $x$,
$\mathfrak{F}_{x_{k}}(x_{k})=\id$ and
$(\mathfrak{F}_{x_{k}}(x))(x_{k})=x$. Similarly there exists a map
$\mathfrak{F}_{y_{k}}\co [y_{k},y_{k+1}]\to G$ such that
$\mathfrak{F}_{y_{k}}(x)$ fixes the $a_{i}$ for each $x$,
$\mathfrak{F}_{y_{k}}(y_{k})=\id$ and
$(\mathfrak{F}_{y_{k}}(x))(y_{k})=x$.

This allows us to define, $F_{\tilde{x}}^{k+1}\co [x_{k+1},y_{k+1}]\to G$ by:
$$
F_{\tilde{x}}^{k+1}(x)=\left\{\begin{array}{ll} F_{\tilde{x}}^{k}(x) & x\in[x_{k},y_{k}]\\
(\mathfrak{F}_{x_{k}}(x))\circ F_{\tilde{x}}^{k}(x_{k}) & x\in[x_{k+1},x_{k}]\\
(\mathfrak{F}_{y_{k}}(x))\circ F_{\tilde{x}}^{k}(y_{k}) & x\in[y_{k},y_{k+1}] \end{array}\right.
$$

Inductively, we can now define the full map $F_{\tilde{x}}\co I\to G$.

We now show that $[3\Rightarrow 2]$. So take $x'\in I$ with $x'\neq \tilde{x}$ and define $F_{x'}\co I\to G$ by
\begin{equation}\label{eqn:fx}
F_{x'}(y)=F_{\tilde{x}}(y)\circ(F_{\tilde{x}}(x'))^{-1}
\end{equation}
Then $F_{x'}$ satisfies,
\begin{enumerate}
\item{$F_{x'}(y)$ fixes $a_{1},\dots,a_{n-1}$ for all $y\in I$}
\item{$(F_{x'}(y))(x')=y$ for all $y\in I$}
\item{$F_{x'}(x')=\id$.}
\end{enumerate}
 Moreover, we can use \eqref{eqn:fx} to define a map $F\co I\times I\to G$ which is continuous in each variable and satisfies,
\begin{enumerate}
\item{$F(x,y)$ fixes $a_{1},\dots,a_{n-1}$ for all $x,y\in I$}
\item{$(F(x,y))(x)=y$ for all $x,y\in I$}
\item{$F(x,x)=\id$ for all $x\in I$.}
\end{enumerate}

Now take
$x'$ to be a point in $\bb{S}^{1}\setminus I\cup\{a_{1},\dots,a_{n-1}\}$ and let
$I'$ be the component of
$\bb{S}^{1}\setminus\{a_{1},\dots,a_{n-1}\}$ which contains $x'$.
Then since $G$ is continuously $n-1$--transitive there exists $g\in
G$ which permutes the $a_{i}$ so that $g(I)=I'$. Define
$F_{x'}\co I'\to G$ by
$$
F_{x'}(y)=g\circ F_{g^{-1}(x')}(g^{-1}(y)) \circ g^{-1}
$$
for $y\in I'$. Then $F_{x'}$ satisfies,
\begin{enumerate}
\item{$F_{x'}(y)$ fixes $a_{1},\dots,a_{n-1}$ for all $y\in I$}
\item{$(F_{x'}(y))(x')=y$ for all $y\in I'$}
\item{$F_{x'}(x')=\id$.}
\end{enumerate}

Now let $(b_{1},\dots,b_{n-1})\in P_{n-1}$ have the same
orientation as $(a_{1},\dots,a_{n-1})$ then since $G$ is
continuously $n-1$--transitive there exists $g\in G$ so that
$g(a_{i})=b_{i}$ for each $i$. Let
$x'\in\bb{S}^{1}\setminus\{b_{1},\dots,b_{n-1}\}$ and let $I'$ be
the component of $\bb{S}^{1}\setminus\{b_{1},\dots,b_{n-1}\}$ in
which it lies. Define $F_{x'}\co I'\to G$ by
$$
F_{x'}(y)=g\circ F_{g^{-1}(x')}(g^{-1}(y)) \circ g^{-1}
$$
for $y\in I'$. Then $F_{x'}$ satisfies,
\begin{enumerate}
\item{$F_{x'}(y)$ fixes $b_{1},\dots,b_{n-1}$ for all $y\in I$}
\item{$(F_{x'}(y))(x')=y$ for all $y\in I'$}
\item{$F_{x'}(x')=\id$}
\end{enumerate}
 and we have that $[3\Rightarrow 2]$

Finally we have to show that $[2\Rightarrow 1]$. Let $\mathcal{X},\mathcal{Y}\co [0,1]\to P_{n}$ be an compatible pair of paths. We define $\mathcal{X'}\co [0,1]\to P_{n-1}$ by
$$
\mathcal{X'}(t)=(x_{1}(t),\dots, x_{n-1}(t))
$$
and $\mathcal{Y'}\co [0,1]\to P_{n-1}$ by
$$
\mathcal{Y}'(t)=(y_{1}(t),\dots, y_{n-1}(t)).
$$
Notice that $\mathcal{X}'$ and $\mathcal{Y}'$ will also be a compatible pair of paths. Furthermore, as $G$ is continuously $n-1$--transitive there will exist a path $g'\co [0,1]\to G$ such that $g'(t)(x_{i}(t))=y_{i}(t)$ for $1\leq i\leq n-1$.

The paths $\mathcal{X}',\mathcal{Y}'\co [0,1]\to P_{n-1}$ will also be compatible with the constant paths,
$$\mathcal{X}'_{0}\co [0,1]\to P_{n-1}$$
$$\mathcal{X}'_{0}(t)=\mathcal{X}'(0)$$
$$\mathcal{Y}'_{0}\co [0,1]\to P_{n-1}\leqno{\rm and}$$
$$\mathcal{Y}'_{0}(t)=\mathcal{Y}'(0)$$
respectively. So that there exist paths $g_{x}',g_{y}'\co [0,1]\to G$ with $g_{x}'(x_{i}(0))=x_{i}(t)$ and $g_{y}'(y_{i}(0))=y_{i}(t)$ for $1\leq i\leq n-1$. Furthermore, by pre composing with $(g_{x}'(0))^{-1}$ and $(g_{y}'(0))^{-1}$ if necessary, we can assume that $g_{x}'(0)=g_{y}'(0)=\id$.

We now construct a path $g_{x}\co [0,1]\to G$ which satisfies,
$$
g_{x}(t)(x_{i}(0))=x_{i}(t)
$$
for $1\leq i\leq n$. To do this let
$I$ be the component of
$\bb{S}^{1}\setminus\{x_{1}(0),\dots,x_{n-1}(0)\}$ containing
$x_{n}(0)$. By assumption we have a continuous map
$F_{x_{n}(0)}\co I\to G$ satisfying
\begin{enumerate}
\item{$F_{x_{n}(0)}(y)$ fixes $x_{1}(0),\dots,x_{n-1}(0)$ for all $y\in I$}
\item{$(F_{x_{n}(0)}(y))(x)=y$ for all $y\in I$}
\item{$F_{x_{n}(0)}(x)=\id$.}
\end{enumerate}
Define $g_{x}\co [0,1]\to G$ by
$$
g_{x}(t)=g_{x}'(t)\circ(F_{x_{n}(0)}((g_{x}'(t))^{-1}(x_{n}(t))))^{-1}.
$$
Then $g_{x}(t)(x_{i}(0))=x_{i}(t)$ for $1\leq i\leq n$. We can repeat this process with $g_{y}'$ to construct a path $g_{y}\co [0,1]\to G$ satisfying $g_{y}(t)(y_{i}(0))=y_{i}(t)$ for $1\leq i\leq n$.

The map $g'(0)$ which we defined earlier will map $x_{i}(0)$ to $y_{i}(0)$ for $1\leq i\leq n-1$. Moreover, $g'(0)(x_{n}(0))$ will lie in the same component of $\bb{S}^{1}\setminus\{y_{1}(0),\dots y_{n-1}(0)\}$ as $y_{n}(0)$. So we have a map $F_{g'(0)(x_{n}(0))}(y_{n}(0))$ which maps $g'(0)(x_{n}(0))$ to $y_{n}(0)$ and fixes the other $y_{i}(0)$. Putting all of this together allows us to define $g\co [0,1]\to G$ by
$$
g(t)=g_{y}(t)\circ F_{g'(0)(x_{n}(0))}(y_{n}(0))\circ g'(0)\circ(g_{x}(t))^{-1}.
$$
This is a path in $G$ which satisfies $g_{t}(x_{i}(t))=y_{i}(t)$ for each $i$ and $t$. Since we can do this for any two compatible paths, $G$ is continuously $n$--transitive and we have shown that $[2\Rightarrow 1]$.
\end{proof}

\begin{prop}\label{prop:one}
If $G$ is $1$--transitive and there exists a continuous deformation
of the identity $f_{t}\co [0,1]\to G$ in $G$, then $G$ is
continuously $1$--transitive.
\end{prop}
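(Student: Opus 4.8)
The plan is to reduce continuous $1$--transitivity to the problem of continuously ``following'' a single moving point by elements of $G$, and to assemble such a following path out of small local pieces. For $n=1$ we have $P_{1}=\bb{S}^{1}$, and every pair of paths $\mathcal{X},\mathcal{Y}\co[0,1]\to\bb{S}^{1}$ is automatically compatible (take $h(t)$ to be the rotation carrying $x(t)$ to $y(t)$), so continuous $1$--transitivity merely asks that for any two paths $x(t),y(t)$ in $\bb{S}^{1}$ there is a path $g\co[0,1]\to G$ with $g(t)(x(t))=y(t)$. Suppose I can produce, for any path $x(t)$, a path $g_{x}\co[0,1]\to G$ with $g_{x}(0)=\id$ and $g_{x}(t)(x(0))=x(t)$. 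Then, choosing $g_{0}\in G$ with $g_{0}(x(0))=y(0)$ by $1$--transitivity and building $g_{y}$ analogously for $y(t)$, the path $g(t)=g_{y}(t)\circ g_{0}\circ (g_{x}(t))^{-1}$ lies in $G$, is continuous (composition and inversion being continuous for $d_{G}$), and satisfies $g(t)(x(t))=y(t)$. So everything reduces to constructing the following path $g_{x}$.

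The one ingredient I lack compared with \fullref{lem:equ} is a supply of deformations of the identity moving \emph{each} point, and this is exactly where $1$--transitivity enters. From the hypothesised non constant deformation $f_{t}$ there is a point $z$ and a time $t_{1}$ with $f_{t_{1}}(z)\neq z$. For an arbitrary $a\in\bb{S}^{1}$ pick $g\in G$ with $g(z)=a$ and set $\widehat{f}_{t}=g\circ f_{t}\circ g^{-1}$; this lies in $G$, is a continuous deformation of the identity, and since $g$ is injective $\widehat{f}_{t_{1}}(a)=g(f_{t_{1}}(z))\neq g(z)=a$. Thus every point of $\bb{S}^{1}$ is moved by some deformation of the identity in $G$.

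I then mimic the set $K_{\tilde{x}}$ of \fullref{lem:equ}: let $K_{\tilde{x}}$ be the set of $x\in\bb{S}^{1}$ reachable from $\tilde{x}$ by a path $f_{t}\in G$ with $f_{0}=\id$ and $f_{1}(\tilde{x})=x$. The relation $x\in K_{\tilde{x}}$ is an equivalence relation, by concatenating paths and inverting them (using continuity of inversion for $d_{G}$). To see $K_{\tilde{x}}$ is open, take $x\in K_{\tilde{x}}$ via a path $g_{t}$ and a deformation $\phi_{t}$ with $\phi_{t_{1}}(x)\neq x$; concatenating $g_{t}$ with $r\mapsto\phi_{rs}\circ g_{1}$ (resp.\ $r\mapsto\phi_{rs}^{-1}\circ g_{1}$) shows $\phi_{s}(x)\in K_{\tilde{x}}$ (resp.\ $\phi_{s}^{-1}(x)\in K_{\tilde{x}}$) for all $s\in[0,t_{1}]$. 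Passing to lifts $\widetilde{\phi}_{s}\co\bb{R}\to\bb{R}$ with $\widetilde{\phi}_{0}=\id$ and a lift $\xi$ of $x$, we have $\widetilde{\phi}_{t_{1}}(\xi)\neq\xi$ since $\phi_{t_{1}}(x)\neq x$; as an increasing map satisfies $\widetilde{\phi}_{t_{1}}(\xi)>\xi\iff\widetilde{\phi}_{t_{1}}^{-1}(\xi)<\xi$, the points $\widetilde{\phi}_{t_{1}}(\xi)$ and $\widetilde{\phi}_{t_{1}}^{-1}(\xi)$ lie on opposite sides of $\xi$, so the intermediate value theorem applied to $s\mapsto\widetilde{\phi}_{s}(\xi)$ and $s\mapsto\widetilde{\phi}_{s}^{-1}(\xi)$ shows that $\{\phi_{s}(x)\}\cup\{\phi_{s}^{-1}(x)\}$ fills a neighbourhood of $x$. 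Hence $K_{\tilde{x}}$ is open, the classes $\{K_{\tilde{x}}\}$ partition the connected space $\bb{S}^{1}$ into open sets, and therefore $K_{\tilde{x}}=\bb{S}^{1}$ for every $\tilde{x}$.

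Finally, for each $\tilde{x}$ I construct a continuous local section $F_{\tilde{x}}\co I_{\tilde{x}}\to G$ on a proper open arc $I_{\tilde{x}}\ni\tilde{x}$, with $F_{\tilde{x}}(y)(\tilde{x})=y$ and $F_{\tilde{x}}(\tilde{x})=\id$, by repeating verbatim the nested interval and ``simple path'' argument from the proof of $[4\Rightarrow3]$ in \fullref{lem:equ}; that argument uses only $K_{\tilde{x}}\supseteq I_{\tilde{x}}$ and the deformations of the identity produced above, and never invokes continuous $0$--transitivity, so it is legitimate here. This is the step I expect to be the main obstacle, since it is where the \emph{continuity} of $y\mapsto F_{\tilde{x}}(y)$, rather than mere pointwise reachability of each $y$, must be secured. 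Granting it, the arcs $\{I_{x(t)}:t\in[0,1]\}$ pull back under $x$ to an open cover of $[0,1]$, so by the Lebesgue number lemma there is a subdivision $0=t_{0}<\dots<t_{N}=1$ with $x([t_{i},t_{i+1}])$ contained in a single arc $I_{\sigma_{i}}$. Writing $T_{i}(b)=F_{\sigma_{i}}(b)\circ (F_{\sigma_{i}}(x(t_{i})))^{-1}$, which carries $x(t_{i})$ to $b$ and varies continuously in $b\in I_{\sigma_{i}}$, I set $g_{x}(0)=\id$ and $g_{x}(t)=T_{i}(x(t))\circ g_{x}(t_{i})$ for $t\in[t_{i},t_{i+1}]$. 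The pieces agree at each $t_{i}$, and an induction gives $g_{x}(t)(x(0))=x(t)$ throughout. This is the following path required in the first paragraph, which completes the reduction and hence the proof.
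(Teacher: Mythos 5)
Your proposal is correct in substance and is, at bottom, the paper's own proof: the paper's entire argument consists of conjugating the given deformation by $1$--transitivity so that every point of $\bb{S}^{1}$ is moved by some deformation of the identity (your second paragraph does exactly this), and then asserting that the rest ``follows in exactly the same way as $[4\Rightarrow 1]$'' of \fullref{lem:equ}. Your third paragraph is precisely the $K_{\tilde{x}}$ partition argument of $[4\Rightarrow 3]$ (with a more careful lift-and-IVT proof of openness than the paper gives), and your first paragraph is the $n=1$ specialization of $[2\Rightarrow 1]$. Where you add genuine value is in recognizing that the citation ``in exactly the same way'' is not literally available at $n=1$: the relevant ``component'' $\bb{S}^{1}\setminus\emptyset$ is the whole circle rather than an interval, so the nested-interval exhaustion of $[4\Rightarrow 3]$ (whose endpoints are supposed to converge to the endpoints of $I$) and the following formula of $[2\Rightarrow 1]$ (which needs the section defined wherever the moving point travels, while a path may wind around the entire circle) both need modification. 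Your repair --- local sections over proper arcs, re-based by the transition maps $T_{i}$ and glued along a Lebesgue subdivision of $[0,1]$ --- is exactly the right glue, and it also lets you dispense with the inductive exhaustion altogether.

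One spot where your ``verbatim'' is too strong: in $[4\Rightarrow 3]$ of \fullref{lem:equ} the trace $\bar{f}_{t}(\tilde{x})$ is trapped in $I$ because the $a_{i}$ are fixed, so a simple trace from $\tilde{x}$ to a nearby point necessarily sweeps out the subarc between them; at $n=1$ there is no confinement, so a simple trace may go the long way round, and the section it yields lives on an arc having $\tilde{x}$ as an \emph{endpoint} --- conceivably on the same side of $\tilde{x}$ for both of your reference points --- so you do not directly obtain a section on a two-sided neighbourhood $I_{\tilde{x}}\ni\tilde{x}$. This is repaired with tools you already deploy: given a section $F$ based at $\tilde{x}$ on any nondegenerate arc $A$, re-base it at an interior point $p'$ of $A$ via $y\mapsto F(y)\circ(F(p'))^{-1}$ (your $T_{i}$ construction), obtaining a section based at $p'$ defined on the open neighbourhood of $p'$ given by the interior of $A$; then for arbitrary $z\in\bb{S}^{1}$ choose $g\in G$ with $g(p')=z$ and conjugate, $y\mapsto g\circ \widehat{F}(g^{-1}(y))\circ g^{-1}$, which is a section based at $z$ on the open arc $g$ maps the interior of $A$ onto --- this is the paper's own $[3\Rightarrow 2]$ formula. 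With that one supplement, your argument is complete.
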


\begin{proof}
Let $x_{0}\in\bb{S}^{1}$ be such that $f_{t_{0}}(x_{0})\neq x_{0}$ for some $t_{0}\in[0,1]$. Take $x\in \bb{S}^{1}$ then there exists $g\in G$ such that $g(x)=x_{0}$. Consequently, $g^{-1}\circ f_{t}\circ g$ is a continuous deformation of the identity which doesn't fix $x$ for some $t$. Since these deformations exist for each $x\in\bb{S}^{1}$ the proof follows in exactly the same way as $[4\Rightarrow 1]$ from the proof of \fullref{lem:equ}.
\end{proof}

From now on we will assume that $G$ contains a continuous deformation of the
identity, and hence is continuously 1--transitive.

\section{The set $J_{x}$}

\begin{defn}
For $x\in\bb{S}^{1}$ we define $J_{x}$ to be the set of points
$y\in\bb{S}^{1}$ which satisfy the following condition. There exists a continuous deformation of
the identity $f_{t}\in G$ which fixes $x$ for all $t$ and such
that $f_{t_{0}}(y)\neq y$ for some $t_{0}\in[0,1]$.
\end{defn}

It follows directly from this definition that $x\notin J_{x}$.

\begin{lem}
$J_{f(x)}=f(J_{x})$ for every $f\in G$ and $x\in\bb{S}^{1}$.
\end{lem}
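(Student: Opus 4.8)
The plan is to prove the identity by a conjugation argument, exploiting only that $G$ is a group and that the uniform topology is preserved under pre- and post-composition by a fixed homeomorphism. The guiding observation is that a continuous deformation of the identity which witnesses $y\in J_{x}$ can be conjugated by $f$ into one witnessing $f(y)\in J_{f(x)}$. It is cleanest to prove the single inclusion $g(J_{z})\subseteq J_{g(z)}$ for \emph{all} $g\in G$ and $z\in\bb{S}^{1}$, and then recover equality by applying it twice.

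First I would establish $f(J_{x})\subseteq J_{f(x)}$. Fix $y\in J_{x}$ and let $f_{t}$ be a continuous deformation of the identity in $G$ that fixes $x$ for all $t$ and satisfies $f_{t_{0}}(y)\neq y$ for some $t_{0}$. Set $\tilde f_{t}=f\circ f_{t}\circ f^{-1}$. Since $G$ is a group, each $\tilde f_{t}$ lies in $G$, and $\tilde f_{0}=\id$. Because $f_{t}$ fixes $x$, one checks directly that $\tilde f_{t}(f(x))=f(f_{t}(x))=f(x)$ for all $t$, so $\tilde f_{t}$ fixes $f(x)$. Moreover $\tilde f_{t_{0}}(f(y))=f(f_{t_{0}}(y))\neq f(y)$ by injectivity of $f$; in particular $\tilde f_{t}$ is non-constant, hence a genuine continuous deformation of the identity, and it certifies that $f(y)\in J_{f(x)}$.

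The only point requiring care is continuity of $t\mapsto\tilde f_{t}$ in the uniform topology. This holds because $\bb{S}^{1}$ is compact, so $f$ and $f^{-1}$ are uniformly continuous; uniform convergence of $f_{t}$ (and of $f_{t}^{-1}$) is therefore preserved under the fixed pre- and post-compositions, and the inverse $(\tilde f_{t})^{-1}=f\circ f_{t}^{-1}\circ f^{-1}$ is handled by the identical estimate. This establishes $g(J_{z})\subseteq J_{g(z)}$ for every $g\in G$ and $z\in\bb{S}^{1}$. I expect this verification, rather than any conceptual difficulty, to be the main (and entirely routine) obstacle.

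Finally, the reverse inclusion follows by symmetry. Applying the inclusion just proved to $g=f^{-1}$ and $z=f(x)$ gives $f^{-1}(J_{f(x)})\subseteq J_{f^{-1}(f(x))}=J_{x}$, and applying the bijection $f$ to both sides yields $J_{f(x)}\subseteq f(J_{x})$. Combined with the forward inclusion $f(J_{x})\subseteq J_{f(x)}$ this gives the desired equality $J_{f(x)}=f(J_{x})$.
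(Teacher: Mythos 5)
Your proof is correct and uses essentially the same conjugation argument as the paper: a deformation witnessing $y\in J_{x}$ is conjugated by $f$ into one witnessing $f(y)\in J_{f(x)}$, with the reverse inclusion obtained by symmetry (you apply the one-sided inclusion to $f^{-1}$, while the paper simply notes the other inclusion is an identical argument). Your explicit check that conjugation by a fixed homeomorphism preserves continuity in the uniform metric is a detail the paper leaves implicit, but it is routine and changes nothing substantive.
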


\begin{proof}
Let $y\in J_{f(x)}$ and let $f_{t}$ be the corresponding
continuous deformation of the identity with $f_{t_{0}}(y)\neq y$.
Then $f^{-1}\circ f_{t}\circ f$ is also a continuous deformation
of the identity which now fixes $x$, and for which
$f_{t_{0}}(f^{-1}(y))\neq f^{-1}(y)$. This means that $f^{-1}(y)\in
J_{x}$ and hence $y\in f(J_{x})$ so that
$J_{f(x)}\subset f(J_{x})$. The other inclusion is an
identical argument.
\end{proof}

\begin{lem}
$J_{x}$ is open for every $x\in\bb{S}^{1}$.
\end{lem}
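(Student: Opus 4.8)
The plan is to show that every $y\in J_{x}$ admits an open neighborhood contained in $J_{x}$. The key observation is that one does not need to manufacture a new deformation for each nearby point: the \emph{same} deformation that witnesses $y\in J_{x}$ will serve for all points sufficiently close to $y$, because the relevant condition is an open one.

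So first I would fix $y\in J_{x}$ and invoke the definition to obtain a continuous deformation of the identity $f_{t}\in G$ that fixes $x$ for all $t$, together with a time $t_{0}\in[0,1]$ for which $f_{t_{0}}(y)\neq y$. Since $f_{t_{0}}\in G<\mathrm{Homeo}(\bb{S}^{1})$ is a homeomorphism, the function
$$
\phi\co\bb{S}^{1}\to\bb{R},\qquad \phi(z)=d_{\bb{S}^{1}}\bigl(f_{t_{0}}(z),z\bigr)
$$
is continuous, and $\phi(y)>0$ precisely because $f_{t_{0}}(y)\neq y$.

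Next, by continuity of $\phi$ there is an open neighborhood $U$ of $y$ on which $\phi$ stays strictly positive, that is, $f_{t_{0}}(z)\neq z$ for every $z\in U$. For each such $z$ the original deformation $f_{t}$ still fixes $x$ for all $t$ and satisfies $f_{t_{0}}(z)\neq z$, so $z\in J_{x}$ directly from the definition. Hence $U\subset J_{x}$, and since $y\in J_{x}$ was arbitrary, $J_{x}$ is open. (Note that the inequality $f_{t_{0}}(z)\neq z$ already forces $z\neq x$, consistently with $x\notin J_{x}$, so there is nothing further to check.)

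As for difficulty, there is essentially no real obstacle here; the only point worth flagging is conceptual rather than technical. The content is simply that membership in $J_{x}$ is certified by the open condition $f_{t_{0}}(z)\neq z$ evaluated against a \emph{fixed} deformation, so no reparametrization or concatenation of deformations is needed. In particular one does not invoke continuous transitivity or the machinery of \fullref{lem:equ}; those enter only in the subsequent structural analysis of the sets $J_{x}$.
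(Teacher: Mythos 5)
Your proof is correct and is essentially identical to the paper's: both fix $y\in J_{x}$, take the witnessing deformation $f_{t}$ with $f_{t_{0}}(y)\neq y$, and use continuity of $f_{t_{0}}$ to get a neighborhood $U$ of $y$ on which $f_{t_{0}}(z)\neq z$, so that the \emph{same} deformation certifies $U\subset J_{x}$. Your introduction of the displacement function $\phi(z)=d_{\bb{S}^{1}}(f_{t_{0}}(z),z)$ merely makes explicit the continuity argument the paper leaves implicit.
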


\begin{proof}
Let $y\in J_{x}$ and take $f_{t}$ to be the corresponding
continuous deformation of the identity with $f_{t_{0}}(y)\neq y$
for some $t_{0}\in[0,1]$. Then since $f_{t_{0}}$ is continuous there exists a neighborhood $U$ of $y$ such that
$f_{t_{0}}(z)\neq z$ for all $z\in U$. This implies that $U\subset
J_{x}$ and hence that $J_{x}$ is open.
\end{proof}

\begin{lem}\label{lem:fin}
$J_{x}=\emptyset$ for every $x\in\bb{S}^{1}$ or $J_{x}$ has a finite complement for every $x\in\bb{S}^{1}$.
\end{lem}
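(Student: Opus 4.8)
The plan is to prove the stated dichotomy in three stages, reducing everything to a single assertion about one complement set $C_{x}:=\bb{S}^{1}\setminus J_{x}$.

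First I would settle the \emph{empty-versus-nonempty} alternative. Suppose $J_{x_{0}}=\emptyset$ for some $x_{0}$. Given any $y\in\bb{S}^{1}$, transitivity of $G$ supplies $f\in G$ with $f(x_{0})=y$, and the equivariance relation $J_{f(x_{0})}=f(J_{x_{0}})$ gives $J_{y}=f(\emptyset)=\emptyset$. Hence $J_{x_{0}}=\emptyset$ for one $x_{0}$ forces $J_{x}=\emptyset$ for all $x$, so either every $J_{x}$ is empty (the first alternative) or every $J_{x}$ is nonempty. In the latter case the same equivariance relation shows that for any $x,y$ there is $f\in G$ with $C_{y}=f(C_{x})$, so all the sets $C_{x}$ have equal cardinality. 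Consequently it suffices to prove that \emph{one} $C_{x}$ is finite; then every $C_{x}$ is finite of the same cardinality and the second alternative holds.

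Next I would record the structural facts that drive the finiteness argument. The relation $x\preceq y$ defined by ``every continuous deformation of the identity fixing $x$ also fixes $y$'' (that is, $y\in C_{x}$) is a preorder: it is reflexive since $x\notin J_{x}$, and transitive since a deformation fixing $x$ automatically fixes $y$ when $y\in C_{x}$, hence fixes any $z\in C_{y}$. Thus $c\in C_{x}$ implies $C_{c}\subseteq C_{x}$. Moreover, if $h\in G$ fixes $x$ then $J_{x}=h(J_{x})$, so the stabilizer $G_{x}$ preserves the partition $\bb{S}^{1}=J_{x}\sqcup C_{x}$; and every deformation fixing $x$ preserves each component of the open set $J_{x}$ (starting from the identity it cannot jump between components) while fixing all of $C_{x}$ pointwise by definition. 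These remarks make $C_{x}$ a closed, $G_{x}$--invariant set whose forward sets nest.

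Finally, assuming every $J_{x}$ nonempty, I would show $C_{x}$ is finite by proving it is discrete; being closed in the compact circle, a discrete $C_{x}$ is finite. Suppose instead that some $p\in C_{x}$ is an accumulation point of $C_{x}$, say $c_{n}\to p$ with $c_{n}\in C_{x}$. The idea is to manufacture, out of a deformation witnessing $J_{p}\neq\emptyset$ together with continuous $1$--transitivity, a deformation of the identity that fixes $x$ yet moves a point of $C_{x}$ arbitrarily close to $p$ --- contradicting the very definition of $C_{x}$. Concretely one would transport such a deformation by group elements (supplied by $1$--transitivity) so as to localize its support near the accumulating $c_{n}$, using the openness of the moved set and the nesting $C_{p}\subseteq C_{x}$ to control where the motion lands. \textbf{The main obstacle is exactly this last construction}: because $G_{x}$ preserves both $J_{x}$ and $C_{x}$, no single element fixing $x$ can by itself push motion into $C_{x}$, so the required contradiction must be extracted from the interplay between the accumulation of $C_{x}$ at $p$, the nesting of the forward sets, and continuous transitivity, rather than from any one conjugation. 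Packaging this propagation of motion into a genuine deformation of the identity fixing $x$ is the delicate technical heart of the proof; once a point of $C_{x}$ is moved while $x$ is held fixed, the definition of $C_{x}$ is violated, and discreteness --- hence finiteness --- follows.
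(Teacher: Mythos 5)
There is a genuine gap: the entire mathematical content of the lemma --- finiteness of the complement --- is never proved. Your first two paragraphs are correct (the empty/nonempty dichotomy via $J_{f(x)}=f(J_{x})$ and transitivity, the equal cardinality of all the sets $C_{x}$, and the nesting property $c\in C_{x}\Rightarrow C_{c}\subseteq C_{x}$ all match what the paper does or uses implicitly), but they are the routine part. The third paragraph is a statement of intent, not an argument: you say you would "manufacture" a deformation fixing $x$ that moves a point of $C_{x}$, you correctly observe that the stabilizer of $x$ cannot produce such motion, and you then label the missing construction "the delicate technical heart of the proof" without supplying it. Note also that the strategy as stated is self-defeating: such a deformation cannot exist, by the very definition of $C_{x}$, so it can never be "constructed"; a contradiction has to be extracted from the accumulation hypothesis by some other mechanism, and you give no indication of what that mechanism is.

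The paper's resolution of exactly the obstacle you identify is to stop working with elements fixing $x$ altogether. It applies the Hausdorff Maximality Theorem to the family $\{S_{x}:x\in\bb{S}^{1}\}$ (your $C_{x}$) partially ordered by inclusion, takes a maximal totally ordered subfamily, and forms the intersection $\mathcal{S}$ of that chain, which is nonempty by compactness. Maximality of the chain upgrades your one-sided nesting to a symmetry statement: $S_{x}=\mathcal{S}$ for every $x\in\mathcal{S}$. This is the key, because it makes the sets equivariant under arbitrary group elements moving one point of $\mathcal{S}$ to another: if $x',a\in\mathcal{S}$ and $g_{a}(x')=a$, then $g_{a}(S_{x_{0}})=g_{a}(S_{x'})=S_{a}=S_{x_{0}}$. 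Finiteness then falls out of a short orientation argument: if $S_{x_{0}}$ were infinite it would have an accumulation point $x'\in S_{x_{0}}=\mathcal{S}$; taking $(a,b)$ a component of the open nonempty set $J_{x_{0}}$ (so $a,b\in\mathcal{S}$) and $g_{a},g_{b}\in G$ sending $x'$ to $a$ and to $b$, one of the two orientation-preserving maps must push points of $S_{x_{0}}$ accumulating at $x'$ into the interval $(a,b)\subset J_{x_{0}}$, contradicting $g_{a}(S_{x_{0}})=g_{b}(S_{x_{0}})=S_{x_{0}}$. Without the maximal-chain step you only know $S_{a}\subseteq S_{x_{0}}$ and $S_{x'}\subseteq S_{x_{0}}$, which is not enough to run this argument --- which is precisely why your sketch stalls where it does.
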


To prove this lemma we will use the Hausdorff maximality Theorem which we now recall.

\begin{defn}
A set $\mathcal{P}$ is partially ordered by a binary relation $\leq$ if,
\begin{enumerate}
\item{$a\leq b$ and $b\leq c$ implies $a\leq c$}
\item{$a\leq a$ for every $a\in \mathcal{P}$}
\item{$a\leq b$ and $b\leq a$ implies that $a=b$.}
\end{enumerate}
\end{defn}

\begin{defn}
A subset $\mathcal{Q}$ of a partially ordered set $\mathcal{P}$ is
totally ordered if for every pair $a,b\in\mathcal{Q}$ either
$a\leq b$ or $b\leq a$. A totally ordered subset
$\mathcal{Q}\subset\mathcal{P}$ is maximal if for any member
$a\in\mathcal{P}\setminus\mathcal{Q}$, $\mathcal{Q}\cup\{a\}$ is
not totally ordered.
\end{defn}

\begin{thm}[Hausdorff Maximality Theorem]\label{thm:hau}
Every nonempty partially ordered set contains a maximal totally ordered subset.
\end{thm}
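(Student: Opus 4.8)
The plan is to deduce the statement from Zorn's Lemma (to which it is in fact equivalent, both being equivalent to the Axiom of Choice); the real work lies only in choosing the right auxiliary poset. Let $\mathcal{P}$ be the given nonempty partially ordered set, with order $\leq$. I would form the collection
$$\mathcal{T}=\{\mathcal{Q}\subseteq\mathcal{P}:\mathcal{Q}\ \mbox{is totally ordered by}\ \leq\}$$
of all chains of $\mathcal{P}$, and partially order $\mathcal{T}$ by inclusion $\subseteq$. This $\mathcal{T}$ is nonempty, since $\emptyset\in\mathcal{T}$ and every singleton lies in $\mathcal{T}$; moreover a $\subseteq$--maximal element of $(\mathcal{T},\subseteq)$ is \emph{exactly} a maximal totally ordered subset of $\mathcal{P}$ in the sense of the preceding definition, which is what we want.

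The key step is to verify the hypothesis of Zorn's Lemma for $(\mathcal{T},\subseteq)$, namely that every subset of $\mathcal{T}$ which is itself totally ordered by $\subseteq$ has an upper bound in $\mathcal{T}$. Given such a $\subseteq$--chain $\mathcal{C}\subseteq\mathcal{T}$, I would take $\mathcal{U}=\bigcup_{\mathcal{Q}\in\mathcal{C}}\mathcal{Q}$ as the candidate bound and check that $\mathcal{U}$ is again a chain of $\mathcal{P}$. Indeed, if $a,b\in\mathcal{U}$ then $a\in\mathcal{Q}_{1}$ and $b\in\mathcal{Q}_{2}$ for some $\mathcal{Q}_{1},\mathcal{Q}_{2}\in\mathcal{C}$; as $\mathcal{C}$ is totally ordered by inclusion, one of these contains the other, say $\mathcal{Q}_{1}\subseteq\mathcal{Q}_{2}$, so $a,b\in\mathcal{Q}_{2}$, and since $\mathcal{Q}_{2}$ is totally ordered we get $a\leq b$ or $b\leq a$. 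Hence $\mathcal{U}\in\mathcal{T}$, and clearly $\mathcal{Q}\subseteq\mathcal{U}$ for each $\mathcal{Q}\in\mathcal{C}$, so $\mathcal{U}$ is an upper bound (the empty $\subseteq$--chain being bounded by $\emptyset\in\mathcal{T}$). Zorn's Lemma then produces a maximal element $\mathcal{Q}_{\max}\in\mathcal{T}$; since $\mathcal{P}\neq\emptyset$ and $\emptyset$ can always be enlarged by a singleton, $\mathcal{Q}_{\max}$ is a nonempty maximal chain, which finishes the argument.

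The only genuine obstacle is that this merely transfers the difficulty to Zorn's Lemma: the Hausdorff Maximality Theorem, Zorn's Lemma and the Axiom of Choice are all equivalent, so no proof can avoid an appeal to choice somewhere. If a self-contained argument were required, I would instead run the standard transfinite construction directly: fix, by the Axiom of Choice, a function assigning to each non-maximal chain $\mathcal{Q}$ some element of $\mathcal{P}\setminus\mathcal{Q}$ whose adjunction to $\mathcal{Q}$ keeps it totally ordered, and then build an increasing, union-closed ``tower'' of chains on which this extension operator has a fixed point, in the Bourbaki--Witt manner; that fixed point is the desired maximal chain. This transfinite bookkeeping is the one technical part, whereas the reduction above collapses everything to the single routine verification that a nested union of chains is a chain.
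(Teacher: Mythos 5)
Your argument is correct, but there is nothing in the paper to compare it with: the paper does not prove this statement at all. The Hausdorff Maximality Theorem is merely recalled there as a classical fact of set theory, to serve as the tool in the proof of \fullref{lem:fin}; since it is equivalent to the Axiom of Choice, the authors are entitled to treat it as part of the ambient set-theoretic background. Your deduction from Zorn's Lemma is the standard one and is complete: forming the poset $(\mathcal{T},\subseteq)$ of all chains of $\mathcal{P}$, checking that a nested union of chains is again a chain (the only point requiring verification, which you carry out correctly), and observing that a $\subseteq$--maximal element of $\mathcal{T}$ is precisely a maximal totally ordered subset in the sense of the paper's definition. You are also right to flag, yourself, that this is only a reduction of one choice principle to another, and your sketch of the Bourbaki--Witt tower construction is the standard route if one insists on starting from the Axiom of Choice directly rather than from Zorn. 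In the context of this paper, either your argument or a bare citation of the theorem would serve equally well.
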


We now prove \fullref{lem:fin}.

\begin{proof}
Assume that there exists $x\in\bb{S}^{1}$ for which $J_{x}=\emptyset$. Then for every $y\in\bb{S}^{1}$ there exists a map $g\in G$ such that $g(x)=y$. Consequently,
$$
J_{y}=J_{g(x)}=g(J_{x})=g(\emptyset)=\emptyset
$$
for every $y\in\bb{S}^{1}$.

Assume that $J_{x}\neq\emptyset$ for every $x\in\bb{S}^{1}$ and let $S_{x}=\bb{S}^{1}\setminus J_{x}$ denote the complement of $J_{x}$. This means that $S_{x}$ consists of the points $y\in\bb{S}^{1}$ such every continuous deformation of the identity which fixes $x$ also fixes $y$. The set $\mathcal{P}=\{S_{x}:x\in\bb{S}^{1}\}$ is partially ordered by inclusion so that by \fullref{thm:hau} there exists a maximal totally ordered subset, $\mathcal{Q}=\{S_{x}:x\in A\}$, where $A$ is the appropriate subset of $\bb{S}^{1}$.

If we set $\mathcal{S}=\bigcap_{x\in A}S_{x}$ then we have the following:
\begin{enumerate}
\item{$\mathcal{S}\neq\emptyset$}\label{it1}
\item{if $x\in\mathcal{S}$ then $S_{x}=\mathcal{S}$.}\label{it2}
\end{enumerate}
\ref{it1} follows from the fact that $\mathcal{S}$ is the intersection of
a descending family of compact sets, and hence is nonempty.

To see that \ref{it2} is also true, fix $x\in\mathcal{S}$. Then from the definition of $\mathcal{S}$, we will have $x\in S_{a}$ for each $a\in A$. In other words, if we take $a\in A$, then every continuous deformation of the identity which fixes $a$ will also fix $x$. Furthermore, if $y\in S_{x}$ then every continuous deformation of the identity which fixes $a$ not only fixes $x$ but $y$ too, so that $S_{x}\subset S_{a}$. This is true for every $a\in A$ so that $S_{x}\subset\mathcal{S}$. On the other hand, by the maximality of $\mathcal{Q}$, it must contain $S_{x}$. Consequently, if $x\in\mathcal{S}$ then $S_{x}=\mathcal{S}$.

Fix $x_{0}\in\mathcal{S}$ and assume for contradiction that $S_{x_{0}}$ is infinite. Take a sequence $x_{n}\in S_{x_{0}}$ and let $x_{n_{k}}$ be a convergent subsequence with limit $x'$. This limit will also be in $S_{x_{0}}$ as it is closed. As $J_{x_{0}}$ is a nonempty open subset of $\bb{S}^{1}$ it will contain an interval $(a,b)$ with $a,b\in S_{x_{0}}$. Take maps $g_{a},g_{b}\in G$ so that $g_{a}(x')=a$ and $g_{b}(x')=b$. Since $x',a \in S_{x_{0}}$ we have that,
$$
g_{a}(S_{x_{0}})=g_{a}(S_{x'})=S_{g_{a}(x')}=S_{a}=S_{x_{0}}
$$
and similarly for $g_{b}$. As a result $g_{a}(x_{n}),g_{b}(x_{n})\in S_{x_{0}}$ for each $n$, but $g_{a},g_{b}$ are orientation preserving homeomorphisms so that at least one of these points will lie in $(a,b)$, a contradiction.

We have shown that $S_{x_{0}}$ is finite. If we now take any other point $x\in\bb{S}^{1}$ then there exists a map $g\in G$ such that $g(x_{0})=x$. This means that the set $S_{x}=S_{g(x_{0})}=g(S_{x_{0}})$ will also be finite and we are done.
\end{proof}

\begin{thm}\label{thm:so}
If $J_{x}=\emptyset$ for all $x\in\bb{S}^{1}$ then $G$ is
conjugate in $\mathrm{Homeo}(\bb{S}^{1})$ to the group of
rotations $\mathrm{SO}(2,\bb{R})$.
\end{thm}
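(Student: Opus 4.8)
The plan is to exploit the hypothesis through the orbit map $\mathrm{ev}_p\co G\to\bb{S}^1$, $g\mapsto g(p)$, for a fixed basepoint $p$. First I would record the exact meaning of the hypothesis: since $J_x=\emptyset$ for every $x$, any continuous deformation of the identity that fixes a point for all time must fix every point for all time, hence is constant. In particular, if two paths $a_t,b_t\co[0,1]\to G$ satisfy $a_t(p)=b_t(p)$ for all $t$ and $a_0=b_0$, then $d_t:=a_t^{-1}\circ b_t$ is a continuous deformation of the identity fixing $p$ throughout (indeed $d_t(p)=a_t^{-1}(a_t(p))=p$ and $d_0=\id$), so $d_t\equiv\id$ and $a_t\equiv b_t$. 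Thus $J_x=\emptyset$ is exactly \emph{unique path lifting} for $\mathrm{ev}_p$. Meanwhile continuous $1$--transitivity gives \emph{path--lifting existence} (any path in $\bb{S}^1$ is the $p$--trajectory of some path in $G$), and the same normalisation trick shows that the identity path--component $G^0$ is already transitive; so I would replace $G$ by $G^0$ and study $\mathrm{ev}_p\co G^0\to\bb{S}^1$, which now enjoys both existence and uniqueness of path lifts.

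Next I would argue that these two lifting properties present $\mathrm{ev}_p$ as a covering map onto the (locally nice) circle from the path--connected group $G^0$. Let $R_t$ be the lift of a loop $c$ winding once around $\bb{S}^1$ based at $p$, normalised so that $R_0=\id$, and set $\rho:=R_1\in G^0_p$. By unique path lifting the fibre $G^0_p$ is discrete and is precisely the cyclic monodromy orbit $\{\rho^{\,n}\}$, so everything comes down to proving $\rho=\id$.

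The heart of the matter, and the step I expect to be the main obstacle, is ruling out a nontrivial fibre; this is exactly the dichotomy that separates the $\mathrm{SO}(2,\bb{R})$ conclusion here from the cyclic--cover ($\mathrm{PSL}_k$, $\mathrm{Homeo}_k$) conclusions of the complementary case. One special case is cheap: if $\rho$ had finite order $k\ge 2$ it would be an orientation--preserving finite--order homeomorphism of $\bb{S}^1$ fixing $p$, hence conjugate to a nontrivial rotation, which has no fixed point --- a contradiction. For the general (infinite--order) case I would use the group structure rather than the dynamics: a nontrivial discrete fibre $F=G^0_p$ exhibits $G^0\to G^0/F\cong\bb{S}^1$ as a connected proper covering of the circle, so $G^0$ is homeomorphic to $\bb{R}$ or to a circle, and in either case is a connected abelian topological group in which $F$ is a discrete (hence normal) subgroup. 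Being abelian, $F$ acts trivially on the coset space $G^0/F$, that is, on $\bb{S}^1$; but $G^0\subseteq\mathrm{Homeo}(\bb{S}^1)$ acts faithfully, forcing $F=\{\id\}$ --- a contradiction unless the fibre was already trivial. Hence $\rho=\id$ and $\mathrm{ev}_p$ is injective, so $G^0$ acts freely and transitively.

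Finally, a continuous bijection carrying the path--lifting property is a homeomorphism, so $\mathrm{ev}_p\co G^0\to\bb{S}^1$ identifies $G^0$ with a topological group homeomorphic to $\bb{S}^1$ acting on itself by its regular representation; such an action is conjugate in $\mathrm{Homeo}(\bb{S}^1)$ to the rotation action, giving $G^0$ conjugate to $\mathrm{SO}(2,\bb{R})$. To upgrade this to $G$ itself I would observe that any $g\in G$ conjugates the rotation paths $R_t$ (which lie in $G^0$) to rotation paths, so $g$ normalises $G^0$; since the normaliser of the rotation group inside the orientation--preserving homeomorphisms is the rotation group, $g\in G^0$, and therefore $G=G^0$ is conjugate to $\mathrm{SO}(2,\bb{R})$.
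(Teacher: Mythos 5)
Your preliminary reductions are correct, and they coincide with the paper's own proof: the paper also passes to the path component of the identity, constructs the lift $F_{x}\co\bb{R}\to\widehat{G}$ with $F_{x}(t)(x)=\pi_{x}(t)$ (your $R_{t}$ extended periodically, with monodromy $f=F_{x}(1)=\rho$), proves its surjectivity by exactly your unique-path-lifting computation, and reduces everything to showing $\rho=\id$. The two arguments part ways there, and yours has a genuine gap at precisely that point: the assertion that ``by unique path lifting the fibre $G^{0}_{p}$ is discrete'', hence that $\mathrm{ev}_{p}$ is a covering map. Unique path lifting only says that the fibre contains no nonconstant path, which is far weaker than discreteness, and the implication ``existence and uniqueness of path lifts over $\bb{S}^{1}$ implies covering'' is false for path-connected metrizable topological groups. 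Concretely: let $E$ be the path component of the identity in the dyadic solenoid (the inverse limit of the double covers $z\mapsto z^{2}$ of $\bb{S}^{1}$), and let $q\co E\to\bb{S}^{1}$ be projection to the first coordinate. Then $E$ is a path-connected metrizable topological group and $q$ is surjective with existence and uniqueness of path lifts, yet the fibre of $q$ is $\bb{Z}$ carrying the non-discrete $2$--adic topology, $q$ is not a covering, and $E$ is homeomorphic to neither $\bb{R}$ nor $\bb{S}^{1}$. So no soft lifting formalism can produce your covering structure; one must use the fact that the fibre consists of circle homeomorphisms fixing $p$, i.e.\ genuine dynamics. That is exactly where the paper does its heavy lifting (the continuity of $s_{n}$, the identity $s_{n}([0,1])=[0,1]$, and the argument with the set $U_{f}$), so your proposal in effect assumes away the hard part of the theorem.

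The gap is fillable, but by a dynamical argument you would need to supply: if $\rho\neq\id$, choose $y$ with $\rho(y)\neq y$ and let $I$ be the component of $\bb{S}^{1}\setminus\mathrm{Fix}(\rho)$ containing $y$; since $\rho$ has no fixed point in $I$, every point $\rho^{n}(y)$ with $n\neq 0$ lies outside the open subarc of $I$ between $\rho^{-1}(y)$ and $\rho(y)$, whence $d_{G}(\rho^{n},\rho^{m})\geq d_{\bb{S}^{1}}(\rho^{n-m}(y),y)\geq c>0$ for all $n\neq m$, and the fibre is uniformly discrete. Even granting that, two further steps of yours are true but not free: (i) identifying $G^{0}/F$ with $\bb{S}^{1}$ topologically (so that $G^{0}$ really is a covering space of the circle) needs an argument, for instance that the standard loop $[0,1]\to\bb{S}^{1}$ is a quotient map whose lift exhibits the inverse as continuous; (ii) abelianness of $G^{0}$ does not follow from its homeomorphism type --- a group structure is not determined by the underlying space --- but needs H\"older's theorem (a group acting freely by homeomorphisms on $\bb{R}$ or $\bb{S}^{1}$ is abelian, applied to the left regular action) or the one-dimensional case of Hilbert's fifth problem; relatedly, your parenthetical ``discrete (hence normal)'' is not a valid inference, and normality of the point stabilizer $F$ is exactly what abelianness must supply. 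Finally, the claim that a group homeomorphic to $\bb{S}^{1}$ acting on itself by translations is conjugate to the rotations is the compact-group averaging result the paper cites as Proposition 4.1 of \cite{GH}, and your concluding step $G=G^{0}$ via the normalizer of $\mathrm{SO}(2,\bb{R})$ is the same as the paper's, which proves that normalizer fact using \fullref{lem:aff}.
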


We require the following lemma for the proof of this Theorem.

\begin{lem}\label{lem:aff}
If $f\co \bb{R}\to\bb{R}$ is a homeomorphism which conjugates translations to translations, then it is an affine map.
\end{lem}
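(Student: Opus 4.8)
The plan is to translate the hypothesis into a functional equation and recognize it as Cauchy's equation, whose continuous solutions are linear. Write $T_{a}$ for the translation $x\mapsto x+a$. Saying that $f$ conjugates translations to translations means that for every $a\in\bb{R}$ the map $f\circ T_{a}\circ f^{-1}$ is again a translation, say $T_{c(a)}$ for some real number $c(a)$. Evaluating the identity $f\circ T_{a}\circ f^{-1}=T_{c(a)}$ at the point $f(x)$ gives the key relation $f(x+a)=f(x)+c(a)$ for all $x,a\in\bb{R}$, in which $c(a)$ does not depend on $x$.

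First I would pin down $c$. Setting $x=0$ yields $c(a)=f(a)-f(0)$, and substituting this back gives $f(x+a)=f(x)+f(a)-f(0)$. Introducing $g(x)=f(x)-f(0)$, so that $g(0)=0$, a direct substitution then shows $g(x+a)=g(x)+g(a)$ for all $x,a\in\bb{R}$. Thus $g$ is a continuous additive function, that is, a solution of Cauchy's functional equation; continuity is inherited from the fact that $f$ is a homeomorphism.

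Next I would solve Cauchy's equation using continuity. Additivity gives $g(nx)=ng(x)$ for $n\in\bb{N}$, together with $g(-x)=-g(x)$ and $g(x/n)=g(x)/n$, and hence $g(qx)=qg(x)$ for every rational $q$. In particular $g(q)=kq$ with $k=g(1)$, and since $g$ is continuous and $\bb{Q}$ is dense in $\bb{R}$ we conclude $g(x)=kx$ for all $x$. Therefore $f(x)=kx+f(0)$. Finally, since $f$ is a homeomorphism it is injective, which forces $k\neq 0$; hence $f$ is a genuine invertible affine map, as required.

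The derivation of the functional equation and the solution of Cauchy's equation are both routine; the only place needing care is the passage from rational to real scalars, since without a regularity assumption there exist pathological (non-measurable) additive functions that are not linear. Here continuity of $g$ is immediate from $f$ being a homeomorphism, so this step goes through cleanly and I expect no genuine obstacle.
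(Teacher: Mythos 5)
Your proof is correct, and it reaches the conclusion by a different route than the paper. You translate the conjugation hypothesis into the functional equation $f(x+a)=f(x)+c(a)$, observe that $g(x)=f(x)-f(0)$ is a continuous additive function, and then solve Cauchy's equation via rational homogeneity and density of $\bb{Q}$. The paper instead normalizes: it composes $f$ with a translation and with the scaling $M_{\alpha}(x)=\alpha x$ to produce a map $f_{2}$ fixing $0$ and commuting with $x\mapsto x+1$, so that $f_{2}$ fixes all integers; it then shows $f_{2}$ commutes with each $T_{1/n}$ by the root-uniqueness computation $T_{1}=\bigl((f_{2})^{-1}\circ T_{1/n}\circ f_{2}\bigr)^{n}=(T_{\gamma})^{n}$, forcing $\gamma=1/n$; hence $f_{2}$ fixes every rational point and is the identity by continuity, making $f$ affine. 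The two arguments share the same engine --- your identity $g(x/n)=g(x)/n$ is precisely the paper's root-uniqueness step, and both conclude with continuity plus density of the rationals --- but your packaging dispenses with the normalization, works directly with the displacement function $c(a)$, and plugs into the standard theory of Cauchy's equation, whereas the paper's argument stays entirely inside the group of translations and reduces the lemma to showing that a suitably normalized homeomorphism is the identity. Both are complete and elementary; yours is the more standard textbook presentation, the paper's the more self-contained group-theoretic one.
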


\begin{proof}
Let $f$ be a homeomorphism which conjugates translations to
translations and set $f_{1}=T\circ f$ where $T$ is the translation
that sends $f(0)$ to 0. Then $f_{1}$ fixes 0 and also conjugates
translations to translations. In particular there exists $\alpha$
such that $f_{1}$ conjugates $x\mapsto x+1$ to the map $x\mapsto
x+\alpha$. Notice that $\alpha\neq 0$ since the identity is only
conjugate to itself.

Now define $f_{2}=f_{1}\circ M_{\alpha}$ where
$M_{\alpha}(x)=\alpha x$.  A simple calculation shows that $f_{2}$
conjugates $x\mapsto x+1$ to itself and conjugates translations to
translations. Since $f_{2}$ fixes 0 and conjugates $x\mapsto x+1$
to itself, we deduce that it must fix all the integer points.

Now, for $n\in\bb{N}$ let $\gamma\in\bb{R}$ be such that
$(f_{2})^{-1}\circ T_{1/n}\circ f_{2}=T_{\gamma}$ where
$T_{\alpha}(x)=x+\alpha$. It follows that,
$$
T_{1}=(f_{2})^{-1}\circ (T_{1/n})^n \circ f_{2}=((f_{2})^{-1}\circ
T_{1/n}\circ f_{2})^{n}=(T_{\gamma})^{n}
$$
so that $\gamma=1/n$ and $(f_{2})^{-1}\circ T_{1/n}\circ
f_{2}=T_{1/n}$ for every $n\in\bb{N}$. Combining this with the
fact that $f_{2}$ fixes 0, we see that $f_{2}$ must fix all the
rational points and hence is the identity. This implies that
$f_{1}$ and hence $f$ are affine.
\end{proof}

We can now prove \fullref{thm:so}.

\begin{proof}
Let $\widehat{G}<G$ denote the path component of the identity in
$G$. We are going to show that $\widehat{G}$ is a compact group.
Proposition 4.1 in \cite{GH} will then imply that it is conjugate
in $\mathrm{Homeo}(\bb{S}^{1})$ to a subgroup of
$\mathrm{SO}(2,\mathbb{R})$. Moreover, as $\widehat{G}$ is
1--transitive it will be equal to the whole of
$\mathrm{SO}(2,\mathbb{R})$.

For $x\in\bb{S}^{1}$ let $\pi_{x}\co \bb{R}\to\bb{S}^{1}$ be
the usual projection map which sends each integer to $x$ and for each integer translation
$T\co \bb{R}\to\bb{R}$
satisfies $\pi_{x} \circ T=\pi_x$.

If we fix $x\in\bb{S}^{1}$ then since $G$ is continuously 1--transitive we can
choose a continuous path $g\co [0,1]\to G$ such that $g(t)(x)=\pi_{x}(t)$
and $g(0)=\id$. Notice that this path is contained in $\widehat{G}$
and $g(1)$ is not necessarily the identity even though it fixes
$x$.

For $x\in\bb{S}^{1}$ we define a continuous map $F_{x}\co \bb{R}\to \widehat{G}$ by
$$
F_{x}(t)=g(t-[t])\circ g(1)^{[t]}
\eqno(*)
$$
where $[t]$ is the greatest integer less than or equal to $t$. Set $f=F_x(1)$.
Note that $F_x(n)=f^{n}$ for every $n \in \bb{Z}$.

We claim that $F_{x}$ has the following properties,

\begin{enumerate}
\item $F_{x}(t)(x)=\pi_{x}(t)$ for every $t \in \bb{R}$
\item$F_{x}(0)=\id$
\item The map $F_{x}$ is a surjection, that is $F_{x}(\bb{R})=\widehat{G}$
\item If the map $f=F_x(1)$ is not equal to the identity map then $F_x$ is a bijection
\end{enumerate}

The first two properties follow directly from the definition. To see that the third property holds, let $h_s$ be a path in
$\widehat{G}$, $s \ge 0$, $h_0=\id$. Let $\alpha(s)=h_s(x)$. We have that $\alpha$ is a continuous map from the non-negative reals $\bb{R}^{+}$
into the circle. Since the set $\bb{R}^{+}$ is contractible we can lift the map $\alpha$ into the universal cover of the circle.
That is, there is a map $\beta\co \bb{R}^{+} \to \bb{R}$ such that $\pi_{x} \circ \beta=\alpha$. We have $F_x(\beta(s))(x)=h_s(x)$.
Then $({h_s}^{-1} \circ F_x(\beta(s)))(x)=x$. It follows from the assumption of the theorem that $F_x(\beta(s))=h_s$ and $F_x$ is surjective.
The map $F_x$ is injective for $0 \le t <1$, because $F_{x}(t)(x)=\pi_{x}(t)$.
If $F_x(1)$ is not the identity, and since  $F_x(1)(x)=x$ we have that $F_x(m)=F_x(n)$ if and only if  $m=n$, for every two integers $m,n$.
This implies the fourth property.

It follows from $(*)$, and the surjectivity of $F_x$, that $\widehat{G}$ is a compact group if and only if the cyclic group generated by
$F_x(1)=f$ is a compact group. We will prove that $f=\id$.

Assume that $f$ is not the identity map. Since $F_{x}$ is a bijection for each $t \in\bb{R}$ there exists a unique $s_n(t)\in\bb{R}$ such that,
$$
f^{n}\circ F_{x}(t) \circ f^{-n}=F_{x}(s_n(t)).
\eqno(**)
$$
This defines a function $s_n\co \bb{R}\to\bb{R}$ which we claim is continuous for each $n$. To see this, fix $n$ and let $t_{m}\in\bb{R}$ be a convergent sequence with limit $t'$. Since $F_{x}$ is continuous,
$$
f^{n}\circ F_{x}(t_m) \circ f^{-n}\longrightarrow f^{n}\circ F_{x}(t') \circ f^{-n}
$$
and so $F_{x}(s_n(t_m))\to F_{x}(s_n(t'))$ as $m\to\infty$.

Now, if $s_n(t_{m_k})$ is a convergent subsequence, with limit $t_0$, then using continuity $F_{x}(s_n(t_{m_k}))$ will converge to $F_{x}(t_0)$. Since $F_{x}$ is a bijection this gives us that $t_{0}=s_{n}(t')$. Consequently, if the sequence $s_{n}(t_{m})$ were bounded, then it would converge to $t'$.

Assume now that the sequence $s_{n}(t_{m})$ is unbounded and take a divergent subsequence $s_{n}(t_{m_{k}})$. Consider the corresponding sequence,
$$
F_{x}(s_n(t_{m_{k}}))=g(s_n(t_{m_{k}})-[s_n(t_{m_{k}})])\circ f^{[s_n(t_{m_{k}})]}.
$$
Since $s_n(t_{m_{k}})-[s_n(t_{m_{k}})]\in [0,1)$ for each $m$, there exists a subsequence $t_{m_{k_{l}}}$ of $t_{m_{k}}$ such that $s_n(t_{m_{k_{l}}})-[s_n(t_{m_{k_{l}}})]$ converges to some $t_{0}\in[0,1]$. Now since $g$ is continuous and the sequence $F_{x}(s_n(t_{m}))$ converges to a homeomorphism $F_{x}(s_{n}(t'))$ we have that $f^{[s_{n}(t_{m_{k_{l}}})]}$ converges to a homeomorphism as $l\to\infty$. However, as $s_{n}(t_{m_{k}})$ is divergent $[s_{n}(t_{m_{k_{l}}})]$ will be divergent too.

Let $S_{f}$ denote the set of fixed points of $f$. Note that $x \in S_f$. Since we assume that $f$ is not the identity we have that
$\bb{S}^{1} \setminus S_f$ is non-empty. Let $J$ be a component of $\bb{S}^{1} \setminus S_f$ and let $a,b \in \bb{S}^{1}$ be its endpoints. Since $f$ fixes $J$, and has no fixed points inside $J$ we deduce that on compact subsets of $J$ the sequence $f^{[s_{n}(t_{m_{k_{l}}})]}$ converges to one of the endpoints and consequently, can not converge to a homeomorphism. This is a contradiction, so $s_{n}(t_{m})$ can not be unbounded and $s_{n}$ is continuous.

Notice that $s_n(0)=0$ and if $t \in \bb{Z}$ then $F_{x}(t)$ will commute with
the $f^{n}$ so we have  $s_n(m)=m$ for all $m \in \bb{Z}$. This yields that $s_n([0,1])=[0,1]$ for every $n \in \bb{Z}$.

Let $U_f \subset \bb{S}^{1}$ be the set defined as follows. We say that $y \in U_f$ if there exists an open interval $I$, $y \in I$,
such that $|f^{n}(I)| \to 0$, $n \to \infty$. Here $|f^{n}(I)|$ denotes the length of the corresponding interval. The set $U_f$ is open.
We show that $U_f$ is non-empty and not equal to $\bb{S}^{1}$. As before, let $J$ be a component of $\bb{S}^{1} \setminus S_f$ and let $a,b \in \bb{S}^{1}$ be its endpoints. Since $f$ fixes $J$, and has no fixed points inside $J$ we deduce that on compact subsets of $J$ the sequence $f^n$ converges to one of the endpoints, say $a$. This shows that $J \subset U_f$. Also, this shows that the point $b$ does not belong to $U_f$.

Let $y \in U_f$, and let $I$ be the corresponding open interval so that  $y \in I$ and $|f^{n}(I)| \to 0$, $n \to \infty$.
Set $f^{n}(I)=I_n$. Consider the interval $F_{x}(s_n(t))(I_n)$, $t \in [0,1]$. Since $s_n([0,1])=[0,1]$ we have that $F_{x}(s_n([0,1]))$ is a compact family of homeomorphisms. This allows us to conclude that
$|F_{x}(s_n(t))(I_n)| \to 0$, $n \to \infty$, uniformly in $n$ and $t \in [0,1]$.
Set $J_t=F_x(t)(I)$. From $(**)$ we have that $|f^{n}(J_t)| \to 0$, $n \to \infty$, for a fixed $t \in [0,1]$. This implies that
the point $F_x(t)(y)$ belongs to the set $U_f$ for every $t \in [0,1]$.

Let $J$ be a component of  $U_f$, and let $a,b$ be its endpoints. Note that the points $a,b$ do not belong to $U_f$.
Since $F_x(t)$ is a continuous path and $F_x(0)=\id$, for small enough $t$ we have that $F_x(t)(J) \cap J \ne \emptyset$. Since
$F_x(t)(J) \subset U_f$, and since $a,b$ are not in $U_f$ we have that  $F_x(t)(J)=J$. By continuity this extends to hold for every
$t \in [0,1]$. But this means that $F_x(t)(a)=a$ for every $t \in [0,1]$. However, for appropriately chosen inverse
$t_0=\pi_x^{-1}(a)$, we have that $F_x(t_0)(x)=a$, which contradicts the fact that $F_x(t_0)$ is a homeomorphism. This shows that
$f=\id$, and therefore we have proved that $\widehat{G}$ is a compact group.

To finish the argument, it remains to show that $G=\widehat{G}$. Let $\Phi\in\mathrm{Homeo}(\bb{S}^{1})$ be a
map which conjugates $\widehat{G}$ to $\mathrm{SO}(2,\mathbb{R})$ and take $g\in G\setminus\widehat{G}$.
Since $\widehat{G}$ is a normal subgroup of $G$, $\Phi\circ g\circ\Phi^{-1}$ conjugates rotations to rotations.
Lifting to the universal cover we get that every lift of $\Phi\circ g\circ\Phi^{-1}$ conjugates translations to translations.
If we choose one then by \fullref{lem:aff} it will be affine. On the other hand, it must be periodic, and hence is a translation.
So that $\Phi\circ g\circ\Phi^{-1}$ is itself a rotation and we are done.
\end{proof}

\begin{thm}\label{thm:nempty}
If $J_{x}\neq\emptyset$ then one of the following is true:
\begin{enumerate}
\item{$J_{x}=\bb{S}^{1}\setminus\{x\}$ in which case $G$ is continuously 2--transitive.}
\item{There exists $R\in\mathrm{Homeo}(\bb{S}^{1})$ which is conjugate to a finite order rotation and satisfies $R\circ g=g\circ R$ for every $g\in G$. Moreover, $G$ is a cyclic cover of a group $G_{\Gamma}$ which is continuously 2--transitive, where the covering transformations are the cyclic group generated by $R$.}
\end{enumerate}
\end{thm}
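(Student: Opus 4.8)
The plan is to analyse the finite sets $S_{x}:=\bb{S}^{1}\setminus J_{x}$. Since we are assuming $J_{x}\neq\emptyset$, \fullref{lem:fin} tells us that each $S_{x}$ is finite, and it is nonempty because $x\in S_{x}$. The equivariance relation $S_{g(x)}=g(S_{x})$ (which follows from $J_{g(x)}=g(J_{x})$) together with the transitivity of $G$ shows that $|S_{x}|=k$ is the same integer for every $x$. The structural heart of the argument is the claim that if $x\in S_{a}$ then $S_{x}=S_{a}$: if $f_{t}$ is any continuous deformation of the identity fixing $a$, then it fixes $x$ (as $x\in S_{a}$), hence fixes every $y\in S_{x}$, so $S_{x}\subseteq S_{a}$; equality then follows since both sets have $k$ elements. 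Consequently the sets $S_{x}$ form a $G$--invariant partition of $\bb{S}^{1}$ into classes of size exactly $k$.

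If $k=1$ then $S_{x}=\{x\}$, i.e.\ $J_{x}=\bb{S}^{1}\setminus\{x\}$ for every $x$, and I would conclude alternative 1. Indeed, fixing any $a_{1}\in\bb{S}^{1}$, the set $\bb{S}^{1}\setminus\{a_{1}\}$ is its own unique component, and every point of it lies in $J_{a_{1}}$, so by definition there is, for each such $x$, a continuous deformation of the identity fixing $a_{1}$ and moving $x$. This is exactly condition 4 of \fullref{lem:equ} with $n=2$, so $G$ is continuously $2$--transitive.

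Now suppose $k>1$. Fix a basepoint $x_{0}$ and write its class in cyclic order as $S_{x_{0}}=\{x_{0}=q_{0},q_{1},\dots,q_{k-1}\}$. I would define $R\co\bb{S}^{1}\to\bb{S}^{1}$ to be the cyclic successor within each class, which amounts to setting $R(y)=g(q_{1})$ for any $g\in G$ with $g(x_{0})=y$. This is well defined: if $h\in G$ fixes $x_{0}$ then $h(S_{x_{0}})=S_{h(x_{0})}=S_{x_{0}}$, so $h$ induces an orientation--preserving permutation of the cyclically ordered set $S_{x_{0}}$ fixing $q_{0}$, which must be the identity permutation; hence $h(q_{1})=q_{1}$ and $g(q_{1})$ depends only on $y=g(x_{0})$. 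The same formula immediately gives $g\circ R=R\circ g$ for all $g\in G$. For continuity, note that along any path $y(t)$ in $\bb{S}^{1}$ the constant path at $x_{0}$ and the path $y(t)$ are compatible, so continuous $1$--transitivity provides a path $g_{t}\in G$ with $g_{t}(x_{0})=y(t)$, whence $R(y(t))=g_{t}(q_{1})$ is continuous; since a map of a metric space that is continuous along every path is continuous (any convergent sequence can be threaded onto a path), $R$ is continuous, and the same argument applied to the predecessor map shows $R^{-1}$ is continuous. Thus $R$ is an orientation--preserving homeomorphism with $R^{k}=\id$ and no fixed points for any power $R^{j}$, $0<j<k$; so $\langle R\rangle\cong\bb{Z}/k$ acts freely and $R$ is conjugate to the rotation by $2\pi/k$, giving the required finite order rotation commuting with $G$.

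Finally, the classes are precisely the $R$--orbits, so passing to the quotient circle $\bb{S}^{1}/\langle R\rangle$ realises $G$ as a cyclic cover, with deck group $\langle R\rangle$, of the group $G_{\Gamma}$ it induces downstairs (well defined since $G$ commutes with $R$). Downstairs each class collapses to a single point, so the corresponding set $S_{\bar{x}}$ is the singleton $\{\bar{x}\}$; that is, $G_{\Gamma}$ is in the $k=1$ situation. Being transitive and containing a continuous deformation of the identity inherited from $G$, it is continuously $1$--transitive by \fullref{prop:one}, and the argument of the second paragraph then shows it is continuously $2$--transitive. This is exactly alternative 2. The step requiring the most care is the continuity of $R$ (equivalently, of $x\mapsto S_{x}$): this is where continuous $1$--transitivity is essential, since it is what lets one lift an arbitrary path of target points to a path in $G$ and thereby transport the cyclic structure continuously around the circle.
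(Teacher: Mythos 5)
Your proposal is correct and takes essentially the same route as the paper: the same dichotomy on the cardinality of $S_{x}=\bb{S}^{1}\setminus J_{x}$, the same successor homeomorphism $R$ commuting with $G$ (you define $R(y)=g(q_{1})$ via group elements, the paper defines it directly as the first point of $S_{y}$ met travelling anticlockwise, and these agree), continuity of $R$ via continuous $1$--transitivity, and the quotient by $\langle R\rangle$ yielding the continuously $2$--transitive group $G_{\Gamma}$. The small variations (your well-definedness check replacing the paper's direct definition, and your explicit threading argument that path-continuity implies continuity) do not change the substance of the argument.
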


\begin{proof}
If $J_{x}=\bb{S}^{1}\setminus\{x\}$ then we are in case 4 of \fullref{lem:equ} with $n=2$. In this situation we know that $G$ will be continuously 2--transitive.

We already know that $S_{x}=\bb{S}^{1}\setminus J_{x}$ must contain $x$ and by \fullref{lem:fin} must be finite. Moreover, as $f(J_{x})=J_{f(x)}$ the sets $S_{x}$ contain the same number of points for each $x\in\bb{S}^{1}$. Define $R\co \bb{S}^{1}\to\bb{S}^{1}$ by taking $R(x)$ to be the first point of $S_{x}$ you come to as you travel anticlockwise around $\bb{S}^{1}$. Now take $g\in G$ and $x\in\bb{S}^{1}$, then since $J_{g(x)}=g(J_{x})$ and $g$ is orientation preserving $R\circ g(x)=g\circ R(x)$ for all $x\in\bb{S}^{1}$.

We now show that $R$ is a homeomorphism. To see this take any continuous path $x_{t}\in\bb{S}^{1}$, we will show that $R(x_{t})\to R(x_{0})$ as $t\to 0$. Since $G$ is continuously 1--transitive, there exists a continuous path $g_{t}\in G$ satisfying $g_{t}(x_{t})=x_{0}$, so that,
$$
\lim_{t\to 0}R(x_{t})=\lim_{t\to 0}(g_{t})^{-1}(R(g_{t}(x_{t})))=\lim_{t\to 0}(g_{t})^{-1}(R(x_{0}))=R(x_{0}).
$$
where the first equality follows from the fact that $R\circ g(x)=g\circ R(x)$ for all $x\in\bb{S}^{1}$. This shows that $R$ is continuous. If we take $y\notin J_{x}$ then $J_{x}\subset J_{y}$, and hence $S_{x}\supset S_{y}$ but in this case since $S_{x}$ and $S_{y}$ contain the same number of points they will be equal. Consequently, $R$ has an inverse defined by taking $R^{-1}(x)$ to be the first point of $S_{x}$ you come to by traveling clockwise around $\bb{S}^{1}$ and this inverse is continuous by the same argument as for $R$. Consequently, $R\in\mathrm{Homeo}(\bb{S}^{1})$. Furthermore, $R$ is of finite order equal to the number of points in $S_{x}$ and hence conjugate to a rotation.

Let $\Gamma$ denote the cyclic subgroup of \H generated by $R$. Define $\pi\co \bb{S}^{1}\to\bb{S}^{1}/\Gamma\cong\bb{S}^{1}$, in the usual way with $\pi(x)$ being the orbit of $x$ under $\Gamma$. Since $R\circ g(x)=g\circ R(x)$ for all $x\in\bb{S}^{1}$, each $g\in G$ defines a well defined homeomorphism of the quotient space $\bb{S}^{1}/\Gamma$ which we call $g_{\Gamma}$. This gives us a homomorphism $\pi_{\Gamma}\co G\to\mathrm{Homeo}(\bb{S}^{1})$, defined by $\pi_{\Gamma}(g)=g_{\Gamma}$. Let $G_{\Gamma}$ denote the image of $G$ under $\pi_{\Gamma}$, then $G$ is a cyclic cover of $G_{\Gamma}$.

It remains to see that $G_{\Gamma}$ is continuously 2--transitive. This follows from the fact that if we take $x_{0}\in\bb{S}^{1}$ then $J_{\pi(x_{0})}=\pi(J_{x_{0}})$, where $J_{\pi(x_{0})}$ is the set of points that can be moved by continuous deformations of the identity in $G_{\Gamma}$ which fix $\pi(x_{0})$. Consequently, $J_{\pi(x_{0})}=\bb{S}^{1}\setminus\{x_{0}\}$ so that $G_{\Gamma}$ is continuously 2--transitive by the first part of this proposition.
\end{proof}

\section{Implications of continuous 2--transitivity}

We now know that if $G$ is transitive and contains a continuous deformation of the identity then it is either conjugate to the group of rotations $\mathrm{SO}(2,\bb{R})$, is continuously 2--transitive, or is a cyclic cover of a group which is continuously 2--transitive. For the rest of the paper we assume that $G$ is continuously 2--transitive and examine which possibilities arise.

For $n\geq 2$ and $(x_{1}\dots x_{n})\in P_{n}$ we define
$J_{x_{1}\dots x_{n}}$ to be the subset of $\bb{S}^{1}$ containing
the points $x\in\bb{S}^{1}$ which satisfy the following condition.
There exists a continuous deformation of the identity $f_{t}\in
G$, with $f_{t}(x_{i})=x_{i}$ for each $i$ and $t$ and such that
there exists $t_{0}\in[0,1]$ with $f_{t_{0}}(x)\neq x$. This
generalizes the earlier definition of $J_{x}$ and we get the
following analogous results.

\begin{lem}
$J_{f(x_{1})\dots f(x_{n})}=f(J_{x_{1}\dots x_{n}})$ for every
$f\in G$.
\end{lem}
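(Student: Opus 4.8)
The plan is to imitate verbatim the earlier proof that $J_{f(x)}=f(J_x)$; the only difference is that we now carry along the whole tuple $(x_1,\dots,x_n)$ rather than a single point, and the conjugation argument is entirely insensitive to this. I would establish the two inclusions separately, each by the same transport-by-conjugation trick.

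For the inclusion $J_{f(x_1)\dots f(x_n)}\subset f(J_{x_1\dots x_n})$, I would start with a point $y\in J_{f(x_1)\dots f(x_n)}$ and let $g_t\in G$ be a witnessing continuous deformation of the identity, so that $g_t(f(x_i))=f(x_i)$ for all $i$ and $t$, while $g_{t_0}(y)\neq y$ for some $t_0$. The key step is to form the conjugated path $h_t=f^{-1}\circ g_t\circ f$. Since $f\in G$ and $G$ is a group, each $h_t$ lies in $G$; the path is continuous in $t$ and satisfies $h_0=\id$, so it is again a continuous deformation of the identity. Because each $g_t$ fixes $f(x_i)$, one computes $h_t(x_i)=f^{-1}(g_t(f(x_i)))=f^{-1}(f(x_i))=x_i$, so $h_t$ fixes every $x_i$. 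Finally $h_{t_0}(f^{-1}(y))=f^{-1}(g_{t_0}(y))\neq f^{-1}(y)$, the inequality coming from injectivity of $f^{-1}$ together with $g_{t_0}(y)\neq y$. Hence $f^{-1}(y)\in J_{x_1\dots x_n}$, i.e.\ $y\in f(J_{x_1\dots x_n})$.

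The reverse inclusion is proved by the identical argument with the roles of $f$ and $f^{-1}$ interchanged (equivalently, by applying the inclusion just proved to $f^{-1}$ in place of $f$ and to the tuple $(f(x_1),\dots,f(x_n))$). I do not expect any genuine obstacle here: the content is purely the observation that conjugation by an element of $G$ carries deformations fixing $(f(x_1),\dots,f(x_n))$ bijectively onto deformations fixing $(x_1,\dots,x_n)$, while preserving the ``moves the extra point'' condition. The only point that warrants a line of care is checking that $h_t$ is still a continuous deformation of the identity in $G$, but this is immediate from $G$ being a group and composition being continuous in the uniform topology.
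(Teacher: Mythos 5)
Your proposal is correct and is exactly the argument the paper intends: the paper states this lemma without proof as an ``analogous result,'' implicitly invoking the conjugation argument from the earlier lemma $J_{f(x)}=f(J_{x})$, and your write-up is precisely that argument ($h_{t}=f^{-1}\circ g_{t}\circ f$) carried along the tuple, with the reverse inclusion obtained by replacing $f$ with $f^{-1}$.
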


\begin{lem}\label{lem:op}
$J_{x_{1}\dots x_{n}}$ is open.
\end{lem}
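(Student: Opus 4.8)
The plan is to mimic exactly the argument already given for the openness of the single-point set $J_{x}$, observing that the additional requirement of fixing $x_{1},\dots,x_{n}$ plays no active role in the openness. The essential point is that membership in $J_{x_{1}\dots x_{n}}$ is witnessed by one single deformation, and the only genuinely pointwise condition---that this deformation move the point in question at some time $t_{0}$---is an open condition.

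Concretely, I would take $x\in J_{x_{1}\dots x_{n}}$ and let $f_{t}\in G$ be a corresponding continuous deformation of the identity, so that $f_{t}(x_{i})=x_{i}$ for all $i$ and $t$, and $f_{t_{0}}(x)\neq x$ for some $t_{0}\in[0,1]$. Since $f_{t_{0}}$ is a homeomorphism of $\bb{S}^{1}$, the set $\{z\in\bb{S}^{1}:f_{t_{0}}(z)\neq z\}$ is open; indeed it is the preimage of the complement of the diagonal under the continuous map $z\mapsto(z,f_{t_{0}}(z))$, equivalently the complement of the closed fixed-point set of $f_{t_{0}}$. Hence there is a neighborhood $U$ of $x$ with $f_{t_{0}}(z)\neq z$ for every $z\in U$.

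The concluding step is to note that the \emph{same} deformation $f_{t}$ serves as a witness for every $z\in U$: the condition $f_{t}(x_{i})=x_{i}$ for all $i$ and $t$ is independent of $z$, while $f_{t_{0}}(z)\neq z$ shows that $z$ is moved at time $t_{0}$. Therefore $z\in J_{x_{1}\dots x_{n}}$ for each $z\in U$, so $U\subset J_{x_{1}\dots x_{n}}$ and the set is open.

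There is essentially no obstacle here, and the only point worth remarking is that one need not separately arrange $U$ to avoid the $x_{i}$: since $f_{t_{0}}(x_{i})=x_{i}$, each $x_{i}$ lies outside $U$ automatically, consistent with the fact that no $x_{i}$ can belong to $J_{x_{1}\dots x_{n}}$ in the first place.
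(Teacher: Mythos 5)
Your proof is correct and is precisely the argument the paper intends: the paper states this lemma without proof as the direct analogue of the earlier openness lemma for $J_{x}$, whose proof is exactly the one you give (a single witnessing deformation $f_{t}$, plus the observation that $\{z : f_{t_{0}}(z)\neq z\}$ is open). Nothing further is needed.
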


We also have the following.

\begin{lem}\label{lem:jeq}
If $J_{x_{1}\dots x_{n}}$ is nonempty and $G$ is continuously
$n$--transitive, then it is equal to
$\bb{S}^{1}\setminus\{x_{1}\dots x_{n}\}$.
\end{lem}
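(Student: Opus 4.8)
The plan is to prove the two inclusions separately. The inclusion $J_{x_{1}\dots x_{n}}\subseteq\bb{S}^{1}\setminus\{x_{1},\dots,x_{n}\}$ is immediate, since any continuous deformation of the identity fixing all of the $x_{i}$ in particular fixes each $x_{i}$, so no $x_{i}$ lies in $J_{x_{1}\dots x_{n}}$. The substance is the reverse inclusion, which I would break into three parts: a local transitivity (reachability) step, a no--barrier step, and a spreading step across the components of $\bb{S}^{1}\setminus\{x_{1},\dots,x_{n}\}$.

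First I would fix $y\in J_{x_{1}\dots x_{n}}$, lying in a component $I$ of $\bb{S}^{1}\setminus\{x_{1},\dots,x_{n}\}$, and run the reachability argument exactly as in the implication $[4\Rightarrow 3]$ of \fullref{lem:equ}: writing $H$ for the continuous deformations of the identity that fix every $x_{i}$ (these preserve each component $I$), the set $K_{y}\subseteq I$ of points reachable from $y$ by such deformations is a connected open subinterval of $I$ on which $H$ acts continuously transitively, and $K_{y}\subseteq J_{x_{1}\dots x_{n}}$. Combined with \fullref{lem:op} this yields openness and local transitivity; what remains is to show that $K_{y}$ fills out all of $I$.

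The hard part is ruling out an interior barrier: suppose the maximal movable interval $K_{y}=(c,d)$ has an endpoint $d$ in the interior of $I$, so that $d$ is fixed by every deformation in $H$ while points arbitrarily close to $d$ on one side are moved. Here I would exploit continuous $n$--transitivity through the $(n-1)$--subtuple $T$ obtained by deleting one of the two endpoints of $I$: by \fullref{lem:equ} the stabilizer $\mathrm{Stab}(T)$ acts continuously transitively on the enlarged arc $C\supseteq I$ containing $d$, via a continuous family $g_{w}\in\mathrm{Stab}(T)$ with $g_{w}(x_{j})=w$ for the deleted endpoint $x_{j}$. Transporting the barrier by this family shows that the barrier position depends continuously and equivariantly on $w$, producing a fixed--point--free $\mathrm{Stab}(T)$--equivariant self--map $\beta$ of $C$ with $\beta(w)$ strictly on one side of $w$. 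Such a $\beta$ is the germ of a nontrivial deck/rotation symmetry commuting with the action, i.e. a cyclic--cover structure, and this cannot coexist with the standing hypothesis that $G$ is continuously $2$--transitive, which is precisely the trivial--cover alternative (case 1) of \fullref{thm:nempty}. I expect this equivariant--barrier step to be the main obstacle, since it is exactly where the mere nonemptiness of $J_{x_{1}\dots x_{n}}$ must be upgraded, using the continuous transitivity of the larger stabilizers, to full movability; the routine reachability and openness arguments do not by themselves exclude a barrier, and making the reduction to \fullref{thm:nempty} precise is the delicate point.

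Finally, once some component $I$ satisfies $I\subseteq J_{x_{1}\dots x_{n}}$, I would spread this to every component. Since $J_{x_{1}\dots x_{n}}$ depends only on the set $\{x_{1},\dots,x_{n}\}$ and not on its ordering, and since the cyclically shifted tuple has matching orientation with $(x_{1},\dots,x_{n})$, the $n$--transitivity implied by continuous $n$--transitivity supplies $g\in G$ permuting the $x_{i}$ cyclically and carrying $I$ onto any prescribed component $I'$. The identity $J_{g(x_{1})\dots g(x_{n})}=g(J_{x_{1}\dots x_{n}})$ then gives $g(J_{x_{1}\dots x_{n}})=J_{x_{1}\dots x_{n}}$, so $I'=g(I)\subseteq J_{x_{1}\dots x_{n}}$. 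As this holds for every component, $J_{x_{1}\dots x_{n}}=\bb{S}^{1}\setminus\{x_{1},\dots,x_{n}\}$, as required.
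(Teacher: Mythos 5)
Your first step (the trivial inclusion) and your last step (spreading over components via a cyclic permutation, using $J_{g(x_{1})\dots g(x_{n})}=g(J_{x_{1}\dots x_{n}})$) are fine, and you correctly identify that the reachability argument borrowed from $[4\Rightarrow 3]$ of \fullref{lem:equ} does not by itself exclude an interior barrier, since the partition-into-open-classes trick fails when only \emph{some} points of $I$ are known to be movable. But the middle step --- the only hard one --- is not a proof. Two things go wrong. First, your barrier map $\beta$ is not well defined: setting $\beta(w)=g_{w}(d)$ requires every element of $\mathrm{Stab}(T)$ carrying $x_{j}$ to $w$ to agree at $d$, which amounts to requiring that the \emph{full} stabilizer $G_{0}$ of all $n$ points fixes $d$. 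You only control the deformations of the identity inside $G_{0}$ (these fix $d$ because $d\notin J_{x_{1}\dots x_{n}}$); in the pathological situation you are trying to exclude, $I$ could split into many reachability classes and fixed points which elements of $G_{0}$ not path-connected to the identity permute. (You cannot quote $G_{0}=\widehat{G}_{0}$ from \fullref{prop:equ}: that is proved later and its proof chain passes through \fullref{prop:ind}, which rests on this very lemma.) Second, even granting a continuous, fixed-point-free, equivariant $\beta$, the appeal to \fullref{thm:nempty} is a category error: the cyclic-cover alternative there concerns a finite-order homeomorphism $R$ of all of $\bb{S}^{1}$ commuting with \emph{every} element of $G$, built from the single-point sets $S_{x}$; an equivariant self-map of one arc $C$ under one $(n-1)$-point stabilizer is a much weaker object, and globalizing it into a commuting rotation --- the only thing that would genuinely contradict continuous $2$--transitivity --- is exactly the work you have not done.

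The paper's proof avoids all of this with one observation you never use: a point \emph{outside} $J_{x_{1}\dots x_{n}}$ is automatically fixed by every deformation witnessing membership in $J_{x_{1}\dots x_{n}}$, so such points may be adjoined to the tuple without shrinking $J$. Concretely, pick a component $(b_{1},b_{2})$ of the open set $J_{x_{1}\dots x_{n}}$ and suppose for contradiction that $b_{1}$ is not one of the $x_{i}$; after a cyclic permutation (available by continuous $n$--transitivity) we may assume $b_{1}$, and hence the whole interval, lies in the component of $\bb{S}^{1}\setminus\{x_{1},\dots,x_{n}\}$ bounded by $x_{1}$ and $x_{2}$. Since $b_{1},b_{2}\notin J_{x_{1}\dots x_{n}}$, every relevant deformation fixes them, so $J_{b_{1}b_{2}x_{3}\dots x_{n}}\supseteq J_{x_{1}\dots x_{n}}\supseteq (b_{1},b_{2})$. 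Now take $g\in G$ with $g(b_{1})=x_{1}$, $g(b_{2})=x_{2}$ and $g(x_{i})=x_{i}$ for $i\geq 3$; equivariance gives $J_{x_{1}\dots x_{n}}=g\bigl(J_{b_{1}b_{2}x_{3}\dots x_{n}}\bigr)\supseteq g\bigl((b_{1},b_{2})\bigr)$, which is the entire component between $x_{1}$ and $x_{2}$ and in particular contains $b_{1}$ --- contradicting the fact that $b_{1}$, as an endpoint of a component of the open set $J_{x_{1}\dots x_{n}}$, is not in it. Thus every component of $J_{x_{1}\dots x_{n}}$ is a full component of $\bb{S}^{1}\setminus\{x_{1},\dots,x_{n}\}$, and your spreading step (which the paper leaves implicit) finishes. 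This elementary swap of unmarked endpoints for marked points is the idea your sketch is missing; it also explains why no cyclic-cover alternative can arise here, in contrast with the single-point dichotomy of \fullref{thm:nempty}.
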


\begin{proof}
Assume that $J_{x_{1}\dots
x_{n}}\subset\bb{S}^{1}\setminus\{x_{1},\dots,x_{n}\}$ is
nonempty. By \fullref{lem:op} it is also open and hence is a
countable union of open intervals. Pick one of these, and call its
endpoints $b_{1}$ and $b_{2}$. Assume for contradiction that at
least one of $b_{1}$ and $b_{2}$ is not one of the $x_{i}$.
Interchanging $b_{1}$ and $b_{2}$ if necessary we can assume that
this point is $b_{1}$. Since $G$ is continuously $n$--transitive
there exist elements of $G$ which cyclically permute the $x_{i}$.
Using these elements and the fact that $J_{f(x_{1})\dots
f(x_{n})}=f(J_{x_{1}\dots x_{n}})$ for every $f\in G$, we can
assume without loss of generality that $b_{1}$ and hence the whole
interval lies in the component of \S
$\setminus\{x_{1},\dots,x_{n}\}$ whose endpoints are $x_{1}$ and
$x_{2}$.

We now claim that $J_{b_{1},b_{2},x_{3},\dots,x_{n}}\supset
J_{x_{1}\dots x_{n}}$. To see this, take $x\in J_{x_{1}\dots
x_{n}}$, then there exists a continuous deformation of the
identity $f_{t}$ which fixes $x_{1},\dots,x_{n}$ and for which
there exists $t_{0}$ such that $f_{t_{0}}(x)\neq x$. Now since
$b_{1},b_{2}\notin J_{x_{1}\dots x_{n}}$, $f_{t}$ must also fix
$b_{1}$ and $b_{2}$ for all $t$, consequently we can use $f_{t}$
to show that $x\in J_{b_{1},b_{2},x_{3},\dots,x_{n}}$. In
particular, this means that $J_{b_{1},b_{2},x_{3},\dots,x_{n}}$
contains the whole interval between $b_{1}$ and $b_{2}$.

Take $g\in G$ which maps $\{b_{1},b_{2}\}$ to $\{x_{1},x_{2}\}$
and fixes the other $x_{i}$, such an element exists as $G$ is
continuously $n$--transitive. Then,
$$
J_{x_{1},x_{2},x_{3},\dots
,x_{n}}=J_{g(b_{1}),g(b_{2}),g(x_{3}),\dots
,g(x_{n})}=g(J_{b_{1},b_{2},x_{3},\dots,x_{n}})
$$
so that $J_{x_{1}\dots x_{n}}$ must contain the whole interval
between $x_{1}$ and $x_{2}$. This is a contradiction, since
$b_{1}$ lies between $x_{1}$ and $x_{2}$ but is not in
$J_{x_{1}\dots x_{n}}$.
\end{proof}

\begin{prop}\label{prop:ind}
Let $G$ be continuously $n$--transitive for some $n\geq 2$ and
suppose there exist $n$ distinct points
$a_{1},\dots,a_{n}\in\bb{S}^{1}$ and a continuous deformation of
the identity $g_{t}\in G$, which fixes each $a_{i}$ for all $t$.
Then $G$ is continuously $n+1$ transitive.
\end{prop}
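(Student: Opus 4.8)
The plan is to deduce this directly from \fullref{lem:equ}, applied with $n+1$ in place of $n$; concretely I would verify condition $(4)$ of that lemma for the index $n+1$ and then invoke the implication $[4\Rightarrow 1]$. For the index $n+1$, condition $(4)$ asks that $G$ be continuously $n$--transitive---which is exactly part of our hypothesis---and that there exist an $n$--tuple $(a_{1},\dots,a_{n})\in P_{n}$ together with a component $I$ of $\bb{S}^{1}\setminus\{a_{1},\dots,a_{n}\}$ such that every $x\in I$ is moved by some continuous deformation of the identity fixing all the $a_{i}$. So the whole task reduces to producing such a tuple and component.

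The hypothesis supplies exactly the seed needed. The given deformation $g_{t}$ fixes each $a_{i}$ for all $t$ and, being a continuous deformation of the identity, is non constant; hence there is a time $t_{0}$ and a point $x_{0}$ with $g_{t_{0}}(x_{0})\neq x_{0}$. Since $g_{t_{0}}$ fixes every $a_{i}$, this $x_{0}$ is distinct from all of them, and by definition $x_{0}\in J_{a_{1}\dots a_{n}}$. Thus $J_{a_{1}\dots a_{n}}$ is nonempty.

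Now I would appeal to \fullref{lem:jeq}: since $G$ is continuously $n$--transitive and $J_{a_{1}\dots a_{n}}\neq\emptyset$, it follows that $J_{a_{1}\dots a_{n}}=\bb{S}^{1}\setminus\{a_{1},\dots,a_{n}\}$. In particular, if $I$ is any component of $\bb{S}^{1}\setminus\{a_{1},\dots,a_{n}\}$, then $I\subset J_{a_{1}\dots a_{n}}$, so for each $x\in I$ there is, by the very definition of $J_{a_{1}\dots a_{n}}$, a continuous deformation of the identity $f_{t}\in G$ fixing every $a_{i}$ with $f_{t_{0}}(x)\neq x$ for some $t_{0}$. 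This is precisely condition $(4)$ of \fullref{lem:equ} at the index $n+1$, so the implication $[4\Rightarrow 1]$ yields that $G$ is continuously $n+1$--transitive.

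The content here is almost entirely the correct bookkeeping of indices: the genuine work has already been absorbed into \fullref{lem:equ} (the topological construction of the maps $F_{\tilde{x}}$ by gluing on a nested exhaustion, giving $[4\Rightarrow 1]$) and into \fullref{lem:jeq} (the orbit-permutation argument showing a nonempty $J$--set must be everything). The only points requiring a little care are to check that the hypotheses of both lemmas line up after the index shift $n\mapsto n+1$, and that the non constancy of $g_{t}$ genuinely forces $J_{a_{1}\dots a_{n}}$ to be nonempty; beyond that there is no further analytic obstacle.
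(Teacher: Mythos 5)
Your proof is correct and is essentially the paper's own argument: the paper likewise observes that $J_{a_{1}\dots a_{n}}\neq\emptyset$, applies \fullref{lem:jeq} to conclude $J_{a_{1}\dots a_{n}}=\bb{S}^{1}\setminus\{a_{1},\dots,a_{n}\}$, and then invokes \fullref{lem:equ} to get continuous $n+1$--transitivity. Your write-up merely makes explicit the index shift and the use of non constancy that the paper's two-line proof leaves implicit.
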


\begin{proof}
$J_{a_{1}\dots a_{n}}\neq\emptyset$ so by \fullref{lem:jeq} $J_{a_{1}\dots a_{n}}=\bb{S}^{1}\setminus\{a_{1},\dots,a_{n}\}$. We can now apply \fullref{lem:equ} to see that $G$ is continuously $n+1$--transitive.
\end{proof}

\begin{cor}\label{cor:int}
If $G$ is continuously 2--transitive and there exists $g\in
G\setminus\{\id\}$ with an open interval $I\subset\bb{S}^{1}$ such
that the restriction of $g$ to $I$ is the identity, then $G$ is
continuously $n$--transitive for every $n\geq 2$.
\end{cor}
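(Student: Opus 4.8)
The plan is to prove the statement by induction on $n$, the base case $n=2$ being precisely the hypothesis. For the inductive step I would invoke \fullref{prop:ind}: if $G$ is continuously $n$--transitive and there exist $n$ distinct points together with a continuous deformation of the identity fixing all of them for all $t$, then $G$ is continuously $(n+1)$--transitive. The whole problem therefore reduces to producing, for each $n$, a single continuous deformation of the identity in $G$ that fixes $n$ prescribed points. The element $g$ is already the identity on the open interval $I$, hence fixes infinitely many points; the difficulty is that $g$ is a single homeomorphism rather than a path, so I must manufacture a genuine \emph{non-constant} path through the identity that retains a whole sub-interval of fixed points.

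The construction I would use is a commutator trick. Since $g\neq\id$, the open set $\bb{S}^{1}\setminus\mathrm{Fix}(g)$ is non-empty; let $(c,d)$ be one of its components, so that $c,d\in\mathrm{Fix}(g)$ and $(c,d)\cap I=\emptyset$. Because $G$ is continuously $2$--transitive, \fullref{lem:equ} (condition 2 with $n=2$) provides maps $F_{x}$ giving continuous deformations of the identity fixing the single point $c$; using one of these I choose a path $\phi_{t}$ in $G$ with $\phi_{0}=\id$, $\phi_{t}(c)=c$ for all $t$, and $\phi_{t}(d)\in(c,d)$ for $t>0$. I then set
$$
k_{t}=\phi_{t}\circ g\circ\phi_{t}^{-1}\circ g^{-1},
$$
a path in $G$ with $k_{0}=\id$. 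A direct check shows that for $x\in I\cap\phi_{t}(I)$ one has $g^{-1}(x)=x$ and $\phi_{t} g\phi_{t}^{-1}(x)=x$, so $k_{t}$ is the identity on $I\cap\phi_{t}(I)$. Since $\phi_{t}\to\id$ in the uniform topology as $t\to0$ (and so does $\phi_{t}^{-1}$), any fixed compact sub-interval $K\subset I$ satisfies $K\subset I\cap\phi_{t}(I)$ for all sufficiently small $t$, say $t\in[0,\epsilon]$. After reparametrising $[0,\epsilon]$ to $[0,1]$, the path $k_{t}$ fixes every point of $K$, and in particular fixes any $n$ points chosen inside $K$.

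The one genuinely substantive point, and the step I expect to be the main obstacle, is to guarantee that $k_{t}$ is non-constant; without this it would not be a continuous deformation of the identity and \fullref{prop:ind} would not apply. Here I would argue by contradiction: if $k_{t}=\id$ then $\phi_{t}$ commutes with $g$, forcing $\phi_{t}(\mathrm{Fix}(g))=\mathrm{Fix}(\phi_{t} g\phi_{t}^{-1})=\mathrm{Fix}(g)$; but then $\phi_{t}(d)\in\mathrm{Fix}(g)$, contradicting $\phi_{t}(d)\in(c,d)\subset\bb{S}^{1}\setminus\mathrm{Fix}(g)$ for $t>0$. Hence $k_{t}\neq\id$ for small $t>0$, so $k_{t}$ is a genuine continuous deformation of the identity fixing the chosen $n$ points of $K$. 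Applying \fullref{prop:ind} then promotes continuous $n$--transitivity to continuous $(n+1)$--transitivity, and the induction completes the proof.
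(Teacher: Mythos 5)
Your proposal is correct and takes essentially the same approach as the paper: both manufacture a commutator-type continuous deformation of the identity that fixes a whole interval of points (your $k_{t}=\phi_{t}\circ g\circ\phi_{t}^{-1}\circ g^{-1}$ versus the paper's $h_{t}=g^{-1}\circ g_{t}\circ g\circ g_{t}^{-1}$, where $g_{t}$ moves the endpoints of a maximal fixed interval), and then both feed $n$ points of that interval into \fullref{prop:ind} to run the induction from continuous $2$--transitivity upward. The only divergence is in how non-constancy is certified --- the paper takes $I$ maximal and uses that $g$ is not the identity on the strictly larger interval $g_{t}(I)$, while you observe that $\phi_{t}$ cannot commute with $g$ since it pushes the fixed endpoint $d$ into a component of $\bb{S}^{1}\setminus\mathrm{Fix}(g)$ --- a detail-level variation, and your restriction to a compact subinterval $K\subset I\cap\phi_{t}(I)$ for small $t$ in fact handles more carefully a point the paper glosses over.
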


\begin{proof}
Let $I\subset\bb{S}^{1}$ be a maximal interval on which $g$ acts
as the identity, so that if $I'\supset I$ is another interval containing
$I$ then $g$ doesn't act as the identity on $I'$. Let $a$ and $b$
be the endpoints of $I$ and  let $a_{t}$ and $b_{t}$ be continuous
injective paths with $a_{0}=a$,$b_{0}=b$ and $a_{t},b_{t}\notin I$
for each $t\neq 0$. This is possible because $g\neq \id$ so
that $\bb{S}^{1}\setminus I$ will be a closed interval containing
more than one point. Let $g_{t}$ be a continuous path in $G$ so
that $g_{0}=\id$, $g_{t}(a)=a_{t}$ and $g_{t}(b)=b_{t}$, such a
path exists as $G$ is continuously 2--transitive.

Consider the path $h_{t}=g^{-1} \circ g_{t}\circ g\circ
g_{t}^{-1}$ since $g_{0}=\id$ we get $h_{0}=\id$. Now $g_{t}\circ
g\circ g_{t}^{-1}$ acts as the identity on the interval between
$a_{t}$ and $b_{t}$ and by maximality of $I$, $g^{-1}$ will not act as the identity for $t\neq 0$. Consequently, $h_{t}$ is a
continuous deformation of the identity which acts as the identity
on $I$. So if $G$ is continuously $k$--transitive for $k\geq 2$, by
taking $k$--points in $I$ and using \fullref{prop:ind} we
get that $G$ is $k+1$--transitive. As a result, since $G$ is
continuously 2--transitive it will be $n$--transitive for every
$n\geq 2$.
\end{proof}

$\mathrm{SO}(2,\bb{R})$ is an example of a subgroup
of Homeo($\bb{S}^{1}$) which is continuously 1--transitive
but not continuously 2--transitive. However, as the next result
shows, there are no subgroups of Homeo($\bb{S}^{1}$) which
are continuously $2$--transitive but not continuously
$3$--transitive.

\begin{prop}\label{prop:ttt}
If $G$ is continuously $2$--transitive, then it is continuously
$3$--transitive.
\end{prop}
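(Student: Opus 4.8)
The plan is to reduce the statement to the existence, for some pair of distinct points $a_{1},a_{2}\in\bb{S}^{1}$, of a (necessarily non constant) continuous deformation of the identity in $G$ that fixes \emph{both} $a_{1}$ and $a_{2}$. Indeed, \fullref{prop:ind} applied with $n=2$ says precisely that continuous $2$--transitivity together with such a deformation yields continuous $3$--transitivity. Equivalently, since $G$ is continuously $2$--transitive it suffices to show that $J_{a_{1}a_{2}}\neq\emptyset$ for some pair $(a_{1},a_{2})\in P_{2}$, the conclusion then following from \fullref{lem:jeq} and \fullref{lem:equ}.

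To manufacture such a deformation I would run a cancellation argument. Fix $a_{1},a_{2}$. Continuous $2$--transitivity makes $G$ continuously $1$--transitive, and $a_{2}\in J_{a_{1}}=\bb{S}^{1}\setminus\{a_{1}\}$, so there is a continuous deformation of the identity $f_{t}\in G$ with $f_{t}(a_{1})=a_{1}$ for all $t$ and $f_{t_{0}}(a_{2})\neq a_{2}$ for some $t_{0}$. Put $\gamma(t)=f_{t}(a_{2})$, a path in the arc $\bb{S}^{1}\setminus\{a_{1}\}$ with $\gamma(0)=a_{2}$. Since $f_{t}$ is injective, $\gamma(t)\neq a_{1}$, so the constant path $(a_{1},a_{2})$ and the path $(a_{1},\gamma(t))$ form a compatible pair in $P_{2}$ (witnessed by $f_{t}$ itself). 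Continuous $2$--transitivity then provides a path $h_{t}\in G$ with $h_{0}=\id$, $h_{t}(a_{1})=a_{1}$ and $h_{t}(a_{2})=\gamma(t)$. Setting
$$
k_{t}=h_{t}^{-1}\circ f_{t},
$$
we get a continuous path in $G$ with $k_{0}=\id$ which fixes both $a_{1}$ and $a_{2}$ for every $t$, because $h_{t}$ and $f_{t}$ agree at these two points. If $k_{t}$ is non constant, the first paragraph finishes the proof.

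The heart of the matter, and the step I expect to be the main obstacle, is to guarantee that some choice of data makes $k_{t}$ non constant. If no choice of $f_{t}$ and $h_{t}$ ever did so, then for every $f_{t}$ the matching lift $h_{t}$ would be forced to equal $f_{t}$; that is, the lift of a path of the pair $(a_{1},a_{2})$ would be \emph{unique}, so that $G$ would be continuously sharply $2$--transitive, with the identity component of the stabiliser of $a_{1}$ acting simply transitively on $\bb{S}^{1}\setminus\{a_{1}\}$. The plan is to exclude this. First, by \fullref{cor:int} I may assume that no non trivial element of $G$ is the identity on an open interval, since otherwise $G$ is already continuously $n$--transitive for all $n\geq2$. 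Then I would bring in the topology of $P_{2}$: lifting the loop $t\mapsto(\rho_{t}(a_{1}),\rho_{t}(a_{2}))$ obtained by rotating the pair once around $\bb{S}^{1}$ (here $\rho_{t}$ is rotation by $2\pi t$) to a path $g_{t}\in G$ with $g_{0}=\id$ produces an endpoint $g_{1}$ which again fixes both $a_{1}$ and $a_{2}$ and which serves as a monodromy element.

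Finally I would analyse $g_{1}$ and the associated one parameter family dynamically, in the spirit of the proof of \fullref{thm:so}: tracking the forward and backward behaviour of the powers of the family and the shrinking intervals they determine in order to locate fixed points and contractions. The goal is to show that unique lifting imposes an orbit structure on a $1$--parameter subgroup that cannot coexist with $G$ being transitive on all of $\bb{S}^{1}$ while every point--stabiliser acts freely, thus contradicting continuous $2$--transitivity. This dynamical exclusion of the sharply $2$--transitive case is the genuinely delicate point; once it is established, some admissible $f_{t}$ yields a non constant $k_{t}$ fixing two points, and continuous $3$--transitivity follows from \fullref{prop:ind}.
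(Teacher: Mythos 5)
Your reduction is the right one, and it matches the skeleton of the paper's argument: by \fullref{prop:ind} with $n=2$ it suffices to produce a nonconstant continuous deformation of the identity in $G$ fixing two distinct points, i.e.\ to show $J_{a_{1}a_{2}}\neq\emptyset$ for some pair. Your cancellation construction $k_{t}=h_{t}^{-1}\circ f_{t}$ is also sound as far as it goes. But the proof has a genuine gap exactly where you flag it: nothing you write rules out that $k_{t}\equiv\id$ for every admissible choice of $f_{t}$ and $h_{t}$, and the proposed escape route is a plan, not an argument. The monodromy element $g_{1}$ obtained by lifting the rotated pair could a priori be the identity, and even if it is not, it is a single group element fixing $a_{1},a_{2}$, not a path fixing them, so it does not feed into \fullref{prop:ind}. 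The appeal to a dynamical analysis ``in the spirit of \fullref{thm:so}'' does not transfer either: that proof operates under the hypothesis $J_{x}=\emptyset$ for all $x$, the opposite regime from yours, and the statement you hope to extract --- that a continuously $2$--transitive group whose two-point stabilizers admit no nonconstant deformation of the identity cannot exist --- is precisely the proposition being proved. At that step the proposal restates the difficulty rather than resolving it.

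The missing idea, which is how the paper proceeds, is to choose the \emph{target} paths so that fixed points are forced by the intermediate value theorem. Take distinct $a,b$ and injective paths $a(t),b(t)$ with disjoint images, $a(0)=a$, $b(0)=b$, and with $a(t),b(t)$ both lying in one component $I$ of $\bb{S}^{1}\setminus\{a,b\}$ for $t\in(0,1]$. A path $g(t)\in G$ with $g(0)=\id$, $g(t)(a)=a(t)$, $g(t)(b)=b(t)$ then maps the closure of $I$ into itself, hence has a fixed point $c(t)$ there, while $g(t)^{-1}$ restricted to the closure of the other component $I'$ has a fixed point $d(t)$; for a small time interval these can be chosen to depend continuously on $t$. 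Picking $c\in I$, $d\in I'$ and a path $h(t)\in G$ with $h(t)(c)=c(t)$, $h(t)(d)=d(t)$ (continuous $2$--transitivity again), the path $h(t)^{-1}\circ g(t)\circ h(t)$ is a deformation of the identity fixing $c$ and $d$, and it is nonconstant because $g(t)(a)=a(t)\neq a$ for $t\neq0$. This exhibits $J_{cd}\neq\emptyset$ directly and sidesteps the unique-lifting obstruction on which your argument stalls; supplying this construction (or some other actual proof of the exclusion) is what your proposal still needs.
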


\begin{proof}
Let $a,b\in\bb{S}^{1}$ be distinct points. Construct two injective
paths $a(t),b(t)$ in $\bb{S}^{1}$ with disjoint images, such that
$a(0)=a$, $b(0)=b$ and such that $a(t)$ and $b(t)$ lie in the same
component of $\bb{S}^{1}\setminus\{a,b\}$ for $t\in(0,1]$. We
label this component $I$ and the other $I'$.

Since $G$ is continuously 2--transitive, there exists a path
$g(t)\in G$ such that $g(0)=\id$, $g(t)(a)=a(t)$ and $g(t)(b)=b(t)$
for every $t$. Now for every $t$ the restriction of $g(t)$ to
the closure of $I$, is a continuous map of a closed interval into itself, and
hence must have a fixed point, $c(t)$. This point will normally
not be unique, but since $g(t)$ is continuous, for a small enough time interval we can choose
it to depend continuously on $t$. Likewise for the restriction of
$g(t)^{-1}$ to the closure of $I'$, for a small enough time interval we can
choose a path of fixed points $d(t)$, which must therefore also be
fixed points for $g(t)$.

Now pick points $c\in I$ and $d\in I'$. Using continuous
2--transitivity of $G$ construct a path $h(t)\in G$ such that
$h(t)(c)=c(t)$ and $h(t)(d)=d(t)$. Then $h_{t}^{-1}\circ
g(t)\circ h_{t}$ is only the identity when $t=0$ because the same is true of $g(t)$ and we have
constructed a continuous deformation of the identity which fixes
$c$ and $d$ for all $t$. Consequently we can use \fullref{prop:ind} to show that $G$ is continuously $3$--transitive.
\end{proof}

\section{Convergence Groups}\label{sec:con}

\begin{defn}
A subgroup $G$ of \H is a convergence group if for every sequence of
distinct elements $g_{n}\in G$, there exists a subsequence
$g_{n_{k}}$ satisfying one of the following two properties:
\begin{enumerate}
\item{There exists $g\in G$ such that,
$$
\lim_{k\to\infty}g_{n_{k}}=g\hspace{.3cm}{\rm and}\hspace{.3cm}\lim_{k\to\infty}g_{n_{k}}^{-1}=g^{-1}
$$
uniformly in $\bb{S}^{1}$.}
\item{There exist points $x_{0},y_{0}\in\bb{S}^{1}$ such that,
$$
\lim_{k\to\infty}g_{n_{k}}=x_{0}\hspace{.3cm}{\rm and}\hspace{.3cm}\lim_{k\to\infty}g_{n_{k}}^{-1}=y_{0}
$$
uniformly on compact subsets of $\bb{S}^{1}\setminus\{y_{0}\}$ and
$\bb{S}^{1}\setminus\{x_{0}\}$ respectively.}
\end{enumerate}
\end{defn}

The notion of convergence groups was introduced by Gehring and Martin \cite{GM} and they have proceeded to play a central role in geometric group theory. The following theorem has been one of the most important and we shall make frequent use of it.

\begin{thm}\label{thm:gab}
$G$ is a convergence group if and only if it is conjugate in
$\mathrm{Homeo}(\bb{S}^{1})$ to a subgroup of $\mathrm{PSL}(2,\bb{R})$.
\end{thm}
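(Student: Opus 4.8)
The plan is to prove the two implications separately; the forward implication is routine hyperbolic geometry while the converse is the genuine difficulty. For the implication that a subgroup $G<\mathrm{PSL}(2,\bb{R})$ is a convergence group, I would argue purely in the hyperbolic plane. Identify $\bb{S}^{1}$ with the circle at infinity $\partial\bb{H}^{2}$, so that each element of $\mathrm{PSL}(2,\bb{R})$ extends to an orientation preserving isometry of $\bb{H}^{2}$. Fix a basepoint $o\in\bb{H}^{2}$ and let $g_{n}\in G$ be a sequence of distinct elements. Passing to a subsequence, either the orbit points $g_{n}(o)$ remain in a compact subset of $\bb{H}^{2}$, or they leave every compact set. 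In the first case the restrictions of the $g_{n}$ to $\bb{S}^{1}$ form an equicontinuous family, so by Arzel\`a--Ascoli a further subsequence converges uniformly to a M\"obius transformation $g$, and simultaneously $g_{n}^{-1}\to g^{-1}$, realizing case 1 of the definition. In the second case $g_{n}(o)\to x_{0}\in\partial\bb{H}^{2}$ and $g_{n}^{-1}(o)\to y_{0}$ along a subsequence, and the standard north--south estimates for hyperbolic isometries give $g_{n}\to x_{0}$ uniformly on compact subsets of $\bb{S}^{1}\setminus\{y_{0}\}$ and $g_{n}^{-1}\to y_{0}$ uniformly on compact subsets of $\bb{S}^{1}\setminus\{x_{0}\}$, which is case 2.

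For the converse I would first reformulate the hypothesis dynamically: a subgroup of \H is a convergence group exactly when it acts properly discontinuously on the space $P_{3}$ of distinct triples of $\bb{S}^{1}$. This space is an open $3$--manifold on which $\mathrm{PSL}(2,\bb{R})$ acts by the model action one wishes to recognise, and the task becomes that of producing a homeomorphism of $\bb{S}^{1}$ conjugating the $G$--action to a Fuchsian one, equivalently of identifying the quotient with a hyperbolic $2$--orbifold.

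The main obstacle is precisely this converse: it is the Convergence Group Theorem, which is far too deep to reprove here. I would appeal to the work of Tukia, Gabai and Casson--Jungreis, in which the torsion and exceptional dynamical cases are handled directly while the remaining discrete torsion-free case is reduced to the Seifert fibered space conjecture for $3$--manifolds, whose resolution supplies the desired Fuchsian conjugacy. Since the notion of a convergence group originates with Gehring and Martin \cite{GM}, for the present paper it suffices to record this equivalence and cite these sources rather than to give a new argument.
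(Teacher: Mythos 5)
The paper does not prove this theorem at all: it records it as the Convergence Group Theorem and cites Gabai \cite{GA}, Tukia \cite{TU} (many cases), Hinkkanen \cite{HI} (the non-discrete case) and Casson--Jungreis \cite{CJ}. Your plan --- prove the elementary direction, cite the literature for the converse --- is therefore the same approach as the paper's, and your hyperbolic-geometry argument for the elementary direction (bounded orbit in $\bb{H}^{2}$ versus escape to the boundary, a compactness/Arzel\`a--Ascoli argument in the first case, north--south dynamics in the second) is correct and standard. One small remark: your limit in case 1 is a M\"obius map that need not lie in $G$; this matches the usual Gehring--Martin definition, whereas the paper's own definition literally demands $g\in G$, a looseness in the paper's statement rather than a defect of your argument.

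There are, however, two genuine inaccuracies in your description of the converse, and they matter for this paper. First, the convergence property is \emph{not} equivalent to proper discontinuity of the action on the space $P_{3}$ of distinct triples; that equivalence holds only for discrete groups. Indeed $\mathrm{PSL}(2,\bb{R})$ itself is a convergence group (as it must be, by the easy direction), yet it acts transitively on $P_{3}$ and so certainly not properly discontinuously; the correct general reformulation is that the action on $P_{3}$ is \emph{proper}. Second, and consequently, the route you cite --- reduction of the discrete torsion-free case to the Seifert fibered space conjecture, with the remaining cases handled by Tukia, Gabai and Casson--Jungreis --- resolves only the \emph{discrete} half of the theorem. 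The non-discrete half is a separate theorem of Hinkkanen \cite{HI}, which you omit. This omission is not cosmetic here: every group to which the paper applies \fullref{thm:gab} --- $\widehat{\mathcal{G}}_{0}$ in \fullref{prop:red}, $G_{0}$ and $\bar{G}$ in Section 6, and $G$ itself in \fullref{prop:gcon} --- is path-connected and acts transitively on an interval or on the circle, hence is as far from discrete as possible. So the statement your citations actually support does not cover the case this paper uses; you need to include the non-discrete case (Hinkkanen) to make the citation carry the intended weight.
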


This Theorem was proved by Gabai in \cite{GA}. Prior to that, Tukia \cite{TU} proved this result in many cases and Hinkkanen \cite{HI} proved it for non discrete groups. Casson and Jungreis proved it independently using different methods \cite{CJ}. See \cite{CJ}, \cite{GA}, \cite{TU} for references to other papers in this subject.

For the rest of this section we shall assume that $G$ is continuously $n$--transitive, but not continuously $n+1$--transitive for some $n\geq 3$.

Take $(x_{1},\dots,x_{n-1})\in P_{n-1}$ and define
$$
G_{0}=\{g\in G:g(x_{i})=x_{i} \hspace{.3cm} i=1,\dots,n-1\}.
$$
Choose a component $I$ of $\bb{S}^{1}\setminus\{x_{1},\dots,x_{n-1}\}$ and denote its closure by $\bar{I}$. We construct a homomorphism $\Phi\co G_{0}\to\mathrm{Homeo}(\bb{S}^{1})$ as follows. Take $g\in G_{0}$, then since $g$ fixes the endpoints of $I$ and is orientation preserving, we can restrict it to a homeomorphism $g'$ of $\bar{I}$. By identifying the endpoints of $\bar{I}$ we get a copy of $\bb{S}^{1}$ and we define $\Phi(g)$ to be the homeomorphism of $\bb{S}^{1}$ that $g'$ descends to under this identification. We label the identification point $\bar{x}$ and set $\mathcal{G}_{0}=\Phi(G_{0})$ to be the image of $G_{0}$ under $\Phi$.

In this situation \fullref{lem:equ} implies the following. For every $x\in I$, there exists a continuous map $F_{x}\co 
\bb{S}^{1}\setminus\bar{x}\to \mathcal{G}_{0}$ satisfying the properties,

\begin{enumerate}
\item{$(F_{x}(y))(x)=y$\hspace{.3cm}$\forall$ $y\in \bb{S}^{1}\setminus\bar{x}$}
\item{$F_{x}(x)=\id$.}
\end{enumerate}

\begin{prop}\label{prop:iso}
$\Phi\co G_{0}\to\mathcal{G}_{0}$ is an isomorphism.
\end{prop}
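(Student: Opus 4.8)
The plan is to show that $\Phi$ is injective; surjectivity is automatic, since $\mathcal{G}_{0}$ is defined to be the image $\Phi(G_{0})$, and $\Phi$ is a homomorphism by construction. Thus it suffices to verify that its kernel is trivial.

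First I would identify the kernel explicitly. Suppose $g\in G_{0}$ satisfies $\Phi(g)=\id$. By construction $\Phi(g)$ is the homeomorphism to which the restriction $g'$ of $g$ to $\bar{I}$ descends after the endpoints of $\bar{I}$ are identified to the single point $\bar{x}$. Since this identification is a bijection on the interior of $\bar{I}$, the descended map being the identity forces $g'$ to be the identity on all of $\bar{I}$. In particular, $g$ restricts to the identity on the open interval $I$.

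The key step is then to invoke \fullref{cor:int}. Because $G$ is continuously $n$--transitive with $n\geq 3$, it is in particular continuously $2$--transitive. If the kernel of $\Phi$ contained some $g\neq\id$, then $g\in G\setminus\{\id\}$ would act as the identity on the open interval $I$, and \fullref{cor:int} would force $G$ to be continuously $m$--transitive for every $m\geq 2$. This directly contradicts the standing assumption of this section that $G$ is not continuously $n+1$--transitive. Hence the only element of the kernel is $\id$, so $\Phi$ is injective and therefore an isomorphism onto $\mathcal{G}_{0}$.

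I do not expect a genuine obstacle here: the entire content lies in correctly recognizing the kernel as the set of elements of $G_{0}$ acting trivially on $\bar{I}$, after which \fullref{cor:int} applies immediately. The only point requiring a little care is the observation that $\Phi(g)=\id$ really forces $g$ to be the identity on the \emph{closed} interval $\bar{I}$ (and not merely on the quotient circle), but this is clear because the identification collapses only the two endpoints, which $g$ fixes in any case as an element of $G_{0}$.
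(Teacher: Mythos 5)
Your proposal is correct and follows essentially the same route as the paper: surjectivity is immediate from the definition of $\mathcal{G}_{0}$, and injectivity is obtained by noting that a non-trivial element of the kernel would act as the identity on the open interval $I$, whence \fullref{cor:int} would make $G$ continuously $n+1$--transitive, contradicting the standing assumption of the section. Your additional remark that the quotient identification collapses only the endpoints (so $\Phi(g)=\id$ really forces $g|_{I}=\id$) is a harmless elaboration of a step the paper leaves implicit.
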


\begin{proof}
Surjectivity is trivial. If we assume that $\Phi$ is not injective then there will exist $g\in G_{0}$ which is non-trivial and acts
as the identity on $I$. Then by \fullref{cor:int} $G$ will be $n+1$ transitive, a contradiction.
\end{proof}

Let $\widehat{G}_{0}$ denote the path component of the identity in
$G_{0}$, we now analyze the group $\widehat{\mathcal{G}}_{0}=\Phi(\widehat{G}_{0})$.

\begin{prop}\label{prop:red}
$\widehat{\mathcal{G}}_{0}$ is a
convergence group.
\end{prop}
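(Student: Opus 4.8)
My plan is to show that $\widehat{\mathcal{G}}_0$ satisfies the defining dynamical property of a convergence group directly. Recall the situation: $G$ is continuously $n$--transitive but not continuously $(n+1)$--transitive, and $\widehat{G}_0$ is the path component of the identity in the point-stabilizer $G_0$ of $n-1$ points $x_1,\dots,x_{n-1}$. After collapsing the endpoints of the closed interval $\bar I$ to a single point $\bar x$, we obtained a circle on which $\widehat{\mathcal{G}}_0$ acts. The crucial structural input is the family of maps $F_x\co \bb{S}^1\setminus\bar x\to\mathcal{G}_0$ with $(F_x(y))(x)=y$ and $F_x(x)=\id$, coming from \fullref{lem:equ}. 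Informally, $\widehat{\mathcal{G}}_0$ acts on the circle $\bb{S}^1\setminus\bar x$ (an interval) in a \emph{uniquely} and \emph{continuously} transitive way; the failure of continuous $(n+1)$--transitivity of $G$ is exactly what prevents any nontrivial element of $\widehat{\mathcal{G}}_0$ from fixing too many points, via \fullref{cor:int} and \fullref{prop:iso}.

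**The key rigidity: unique transitivity on the interval.**
First I would establish that $\widehat{\mathcal{G}}_0$ is uniquely $1$--transitive on $\bb{S}^1\setminus\bar x$: if two elements $g_1,g_2$ of $\widehat{\mathcal{G}}_0$ agree at a single point $y\in\bb{S}^1\setminus\bar x$, then $g_2^{-1}g_1$ fixes both $y$ and $\bar x$. Because $G$ is not continuously $(n+1)$--transitive, \fullref{prop:ind} forbids a nontrivial deformation of the identity fixing $n+1$ points, and \fullref{cor:int} forbids a nontrivial element of $G_0$ acting as the identity on any subinterval; combined with \fullref{lem:jeq} this should force $g_2^{-1}g_1=\id$. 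This gives that the orbit map $g\mapsto g(x)$ is a bijection from $\widehat{\mathcal{G}}_0$ onto $\bb{S}^1\setminus\bar x$, with continuous inverse $F_x$. So $\widehat{\mathcal{G}}_0$ is parametrized by the open interval $\bb{S}^1\setminus\bar x$, with the identity sitting at $x$.

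**Verifying the convergence dichotomy.**
Now take a sequence of distinct elements $g_k\in\widehat{\mathcal{G}}_0$. By unique transitivity, $g_k=F_x^{-1}$ applied along the points $y_k=g_k(x)\in\bb{S}^1\setminus\bar x$, and passing to a subsequence we may assume $y_k\to y_\infty\in\bb{S}^1$ (a closed circle, so the limit may be $\bar x$ itself). If $y_\infty\neq\bar x$, then by continuity of $F_x$ the $g_k$ converge to the element $F_x(y_\infty)\in\widehat{\mathcal{G}}_0$, and the same argument applied to $g_k^{-1}$ (using that $\widehat{\mathcal{G}}_0$ is a group, so $g_k^{-1}\in\widehat{\mathcal{G}}_0$ as well) gives uniform convergence of the inverses, yielding case (1) of the convergence definition. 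The interesting case is $y_\infty=\bar x$, i.e.\ the sequence escapes to the collapsed point. Here I would argue that the $g_k$ must converge, uniformly on compact subsets of the interval away from a single source point, to the sink $\bar x$: monotonicity of orientation-preserving circle homeomorphisms together with unique transitivity pins down a north--south dynamics, producing points $x_0,y_0$ as in case (2).

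**The expected main obstacle.**
The hard part will be the $y_\infty=\bar x$ case: proving the \emph{uniform} north--south convergence on compact sets rather than mere pointwise behavior. The tool I expect to use is precisely the structure already exploited in the proof of \fullref{thm:so}: on a component where a fixed-point set degenerates, orientation-preserving monotonicity forces images of intervals to shrink or to be pushed toward an endpoint. One must rule out a nontrivial homeomorphism in the limit (which would have an interval of fixed points or too many fixed points, contradicting \fullref{cor:int}) and simultaneously rule out oscillatory behavior, which the unique transitivity and continuity of $F_x$ preclude. I would carefully track the endpoints $\bar x$, $x_1,\dots,x_{n-1}$ and use compactness of $\bb{S}^1$ to upgrade pointwise limits to uniform ones, exactly as the sequence $f^{[s_n(\cdots)]}$ was controlled in \fullref{thm:so}.
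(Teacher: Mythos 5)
Your overall architecture is the same as the paper's: parametrize $\widehat{\mathcal{G}}_{0}$ by the orbit of a basepoint via $F_{x}$, then split according to whether $g_{k}(x)$ accumulates at a point of $\bb{S}^{1}\setminus\bar{x}$ or only at $\bar{x}$. But two steps do not close as written. First, the ``unique transitivity'' step: from the fact that $h=g_{2}^{-1}g_{1}\in\widehat{\mathcal{G}}_{0}$ fixes $y$ and $\bar{x}$ you cannot conclude $h=\id$ by citing \fullref{prop:ind}, \fullref{cor:int} or \fullref{lem:jeq}, because each of these requires either a continuous deformation of the identity fixing the relevant points for \emph{all} $t$, or an element acting as the identity on a whole interval; a single nontrivial element fixing $n$ points provides neither. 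The missing ingredient --- and the heart of the paper's proof --- is a correction trick: take a path $g_{t}$ from $\id$ to $h$ (it exists since $h\in\widehat{\mathcal{G}}_{0}$, but it need not fix $y$), and consider $h_{t}=(F_{y}(g_{t}(y)))^{-1}\circ g_{t}$. This path starts at the identity and fixes $y$ together with $x_{1},\dots,x_{n-1}$ for every $t$, so if it were nonconstant, \fullref{prop:ind} would make $G$ continuously $n+1$--transitive, a contradiction; hence $g_{t}=F_{y}(g_{t}(y))$ for all $t$, and at $t=1$ this gives $h=F_{y}(y)=\id$. This is exactly the paper's proof that $F_{x}$ is a bijection onto $\widehat{\mathcal{G}}_{0}$, and without it your bijection claim is unsupported.

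Second, the escape case $y_{\infty}=\bar{x}$, which you defer as ``the expected main obstacle,'' has a short solution with tools already in your hands, and your proposed substitute is partly off-target. Since $F_{x}$ exists for \emph{every} basepoint $x\in\bb{S}^{1}\setminus\bar{x}$, and $g=F_{x}(g(x))$ for all $g\in\widehat{\mathcal{G}}_{0}$, if for some $x$ a subsequence of $g_{k}(x)$ converged to a point $x'\neq\bar{x}$, then along it $g_{k}=F_{x}(g_{k}(x))\to F_{x}(x')$ uniformly; but then $g_{k}(x_{0})\to F_{x}(x')(x_{0})\neq\bar{x}$ (as $F_{x}(x')$ is a homeomorphism fixing $\bar{x}$), contradicting $g_{k}(x_{0})\to\bar{x}$. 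Hence $g_{k}(z)\to\bar{x}$ for every $z\neq\bar{x}$, which is the conclusion the paper draws. Note also that ruling out a homeomorphism limit does not go through ``an interval of fixed points contradicting \fullref{cor:int}'': such a limit need have no fixed points off $\bar{x}$ at all; the contradiction is simply that it would have to send $x_{0}$ to $\bar{x}$ while fixing $\bar{x}$, violating injectivity. You are right that upgrading this pointwise collapse to uniform convergence on compact sets away from a source point still needs a monotonicity argument --- the paper is in fact silent on that upgrade --- but it is a routine exercise once the pointwise statement is known at every point of $\bb{S}^{1}\setminus\bar{x}$, whereas your plan to imitate the control of $f^{[s_{n}(\cdots)]}$ from \fullref{thm:so} replaces the one-line $F_{x}$ argument with machinery that does not obviously apply here.
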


\begin{proof}
Choose $x\in I$ then we know there exists a continuous map $F_{x}\co \bb{S}^{1}\setminus\bar{x}\to \mathcal{G}_{0}$ satisfying the properties,

\begin{enumerate}
\item{$(F_{x}(y))(x)=y$\hspace{.3cm}$\forall$ $y\in \bb{S}^{1}\setminus\bar{x}$}
\item{$F_{x}(x)=\id$.}
\end{enumerate}

Now since $F_{x}(x)=\id$ and
$F_{x}$ is continuous, the image of $F_{x}$ will lie entirely in
$\widehat{\mathcal{G}}_{0}$.

In fact, $F_{x}$ gives a bijection between $\bb{S}^{1}\setminus\bar{x}$ and $\widehat{\mathcal{G}}_{0}$.
To see this we first observe that injectivity follows directly from condition 1. To see that it is also surjective, take $g\in \widehat{\mathcal{G}}_{0}$. Then there
exists a path $g_{t}\in\widehat{\mathcal{G}}_{0}$ for $t\in[0,1]$ with $g_{0}=\id$ and
$g_{1}=g$. So that $g_{t}(x)$ is a path in $\bb{S}^{1}\setminus\bar{x}$ from $x$ to $g(x)$.
Consider the path $(F_{x}(g_{t}(x)))^{-1}\circ g_{t}$ in
$\widehat{\mathcal{G}}_{0}$, it fixes $x$ for every $t$, and so must be the
identity for each $t$. Otherwise, by \fullref{prop:ind},
$G$ would be continuously $n+1$--transitive, which would contradict
our assumptions. As a result $g=F_{x}(g(x))$ so $F_{x}$ is a
bijection, with inverse given by evaluation at $x$.

Fix $x_{0}\in \bb{S}^{1}\setminus\bar{x}$, let $g_{n}$ be a sequence of elements of
$\widehat{\mathcal{G}}_{0}$ and consider the sequence of points
$g_{n}(x_{0})$, since $\bb{S}^{1}$ is compact $g_{n}(x_{0})$ has a
convergent subsequence $g_{n_{k}}(x_{0})$ converging to some point
$x'$. If $x'\neq \bar{x}$ then by continuity of
$F_{x_{0}}$, $g_{n_{k}}$ will converge to $F_{x_{0}}(x')$. Now if
there does not exist a subsequence of $g_{n}(x_{0})$ converging to
some $x'\neq \bar{x}$, then take a subsequence $g_{n_{k}}$ such that
$g_{n_{k}}(x_{0})$ converges to $\bar{x}$. If we can show that
$g_{n_{k}}(x)$ converges to $\bar{x}$ for every $x\in \bb{S}^{1}\setminus\bar{x}$ then we
shall be done.

Suppose for contradiction that there exists $x\in \bb{S}^{1}\setminus\bar{x}$ such that
$g_{n_{k}}(x)$ does not converge to $\bar{x}$. Then there exists a
subsequence of $g_{n_{k}}(x)$ which converges to $x'\neq \bar{x}$,
but then by the previous argument the corresponding subsequence of
$g_{n_{k}}$ will converge to the homeomorphism $F_{x}(x')$. This
is a contradiction since $F_{x}(x')(x_{0})$ would have to equal
$\bar{x}$.
\end{proof}

\begin{cor}\label{cor:fix}
Let $g$ be an element of $\widehat{\mathcal{G}}_{0}$. If $g$ fixes a point in $\bb{S}^{1}\setminus\bar{x}$ then it is the identity.
\end{cor}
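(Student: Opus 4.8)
The plan is to read the statement off from the bijection established in the proof of \fullref{prop:red}. First I would recall that the identification point $\bar{x}$ is a common fixed point of every element of $\mathcal{G}_{0}$ (each such map descends from a homeomorphism of $\bar{I}$ fixing the two endpoints, which are glued to $\bar{x}$), and that the quotient circle with $\bar{x}$ removed is canonically the open interval $I$. Thus a point $x\in\bb{S}^{1}\setminus\bar{x}$ is simply a point of $I$, and the continuous map $F_{x}\co\bb{S}^{1}\setminus\bar{x}\to\mathcal{G}_{0}$ with $(F_{x}(y))(x)=y$ and $F_{x}(x)=\id$ is available for this $x$.

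The key input is the bijectivity statement proved in \fullref{prop:red}: for each $x\in I$ the map $F_{x}$ restricts to a bijection $\bb{S}^{1}\setminus\bar{x}\to\widehat{\mathcal{G}}_{0}$ whose inverse is evaluation at $x$. In particular every $g\in\widehat{\mathcal{G}}_{0}$ satisfies $g=F_{x}(g(x))$, and $F_{x}(x)=\id$. Although \fullref{prop:red} records this for one chosen base point, the argument there applies verbatim to an arbitrary $x\in I$, since it rests only on the continuous $n$--transitivity of $G$ together with the standing assumption that $G$ is not continuously $n+1$--transitive (via \fullref{prop:ind}).

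Granting this, the corollary is immediate. If $g\in\widehat{\mathcal{G}}_{0}$ fixes the point $x\in\bb{S}^{1}\setminus\bar{x}$, then $g(x)=x$, so
$$
g=F_{x}(g(x))=F_{x}(x)=\id.
$$
There is no genuine obstacle here: the content of the corollary has already been extracted in establishing that evaluation at $x$ inverts $F_{x}$, so the only element of $\widehat{\mathcal{G}}_{0}$ fixing $x$ is $F_{x}(x)=\id$. If one prefers a self-contained argument rather than quoting the bijection, one would instead choose a path $g_{t}$ in $\widehat{\mathcal{G}}_{0}$ from $\id$ to $g$, form the path $(F_{x}(g_{t}(x)))^{-1}\circ g_{t}$, which fixes $x$ for all $t$ and starts at the identity, and observe that were it ever nontrivial its lift to $\widehat{G}_{0}$ would give a continuous deformation of the identity in $G$ fixing the $n$ points $x_{1},\dots,x_{n-1},x$; \fullref{prop:ind} would then force continuous $n+1$--transitivity, contradicting our assumption. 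Hence the path is constantly $\id$, and evaluating at $t=1$ together with $g(x)=x$ again gives $g=F_{x}(x)=\id$.
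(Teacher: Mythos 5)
Your proposal is correct and follows essentially the same route as the paper: the paper's own proof also quotes the bijectivity of $F_{x}$ (with inverse given by evaluation at $x$) from \fullref{prop:red} and concludes $g=F_{x}(g(x))=F_{x}(x)=\id$. Your added observation that the bijection argument applies at an arbitrary point of $I$, and your alternative self-contained path argument via \fullref{prop:ind}, are sound but not departures from the paper's method.
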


\begin{proof}
Let $x\in \bb{S}^{1}\setminus\bar{x}$ be a fixed point of $g$. From the previous proof we know that $F_{x}\co I\to\widehat{\mathcal{G}}_{0}$ is a bijection. So that $F_{x}(g(x))=g$, but $g$ fixes $x$ so that $g=F_{x}(x)=\id$.
\end{proof}

\begin{cor}\label{cor:tran}
The restriction of the action of $\widehat{\mathcal{G}}_{0}$ to $\bb{S}^{1}\setminus\bar{x}$ is
conjugate to the action of $\bb{R}$ on itself by translation.
\end{cor}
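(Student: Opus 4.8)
The plan is to show that $\widehat{\mathcal{G}}_{0}$ acts \emph{simply transitively} on $L=\bb{S}^{1}\setminus\bar{x}$, and then to pin down such an action by exploiting the convergence group property. Transitivity is already in hand: in the proof of \fullref{prop:red} the map $F_{x}\co L\to\widehat{\mathcal{G}}_{0}$ was shown to be a bijection with inverse $g\mapsto g(x)$, so for any $y,z\in L$ the element $F_{y}(z)\in\widehat{\mathcal{G}}_{0}$ carries $y$ to $z$. Freeness is exactly \fullref{cor:fix}: a nonidentity element of $\widehat{\mathcal{G}}_{0}$ fixes no point of $L$. Note also that every element of $\widehat{\mathcal{G}}_{0}$ fixes the collapsing point $\bar{x}$, so $\widehat{\mathcal{G}}_{0}$ is a subgroup of $\mathrm{Homeo}(\bb{S}^{1})$ fixing $\bar{x}$ and acting simply transitively on the complementary arc.

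Next I would invoke \fullref{prop:red}, which says $\widehat{\mathcal{G}}_{0}$ is a convergence group, together with \fullref{thm:gab} to produce $\Psi\in\mathrm{Homeo}(\bb{S}^{1})$ conjugating $\widehat{\mathcal{G}}_{0}$ onto a subgroup $H\subset\mathrm{PSL}(2,\bb{R})$. The common fixed point $\bar{x}$ is carried to a boundary point $\Psi(\bar{x})$ fixed by all of $H$; composing with a single M\"obius transformation sending $\Psi(\bar{x})$ to $\infty$ (in the upper half-plane model, where $\bb{S}^{1}=\bb{R}\cup\{\infty\}$) realizes $H$ as a subgroup of the orientation preserving affine maps $x\mapsto ax+b$, $a>0$, acting on $\bb{R}=\bb{S}^{1}\setminus\{\infty\}$, since an element of $\mathrm{PSL}(2,\bb{R})$ fixing $\infty$ has vanishing lower-left entry and hence is affine with positive slope. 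Writing $\phi\co L\to\bb{R}$ for the restriction of this composite conjugating homeomorphism, $\phi$ intertwines the action of $\widehat{\mathcal{G}}_{0}$ on $L$ with the action of $H$ on $\bb{R}$.

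It then remains to identify $H$ inside the affine group, using that the conjugated action is still free and transitive. Any nonidentity affine map $x\mapsto ax+b$ with $a\neq 1$ fixes the point $b/(1-a)\in\bb{R}$, which freeness forbids; hence every nonidentity element of $H$ has $a=1$, i.e.\ is a nontrivial translation, and $H\subset\{x\mapsto x+b\}\cong\bb{R}$. Transitivity then forces the translation length $b$ to realize every real value, so $H$ is exactly the full group of translations. Therefore $\phi$ conjugates the action of $\widehat{\mathcal{G}}_{0}$ on $L$ to the action of $\bb{R}$ on itself by translations, as claimed.

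The genuinely new inputs are \fullref{prop:red} and \fullref{thm:gab}; everything after that is routine bookkeeping. The step most in need of care is the passage from ``$\widehat{\mathcal{G}}_{0}$ fixes $\bar{x}$'' to ``$\widehat{\mathcal{G}}_{0}$ is conjugate into the affine group'': one must check that the conjugating homeomorphisms restrict correctly to the punctured circles and track the distinguished fixed point, so that $\phi\co\bb{S}^{1}\setminus\bar{x}\to\bb{R}$ is a genuine homeomorphism conjugating the two actions. Once freeness and transitivity are established, no analysis of the a priori merely topological group structure of $\widehat{\mathcal{G}}_{0}$ is needed, since in the affine picture the classification is transparent.
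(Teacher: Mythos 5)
Your proof is correct and follows essentially the same route as the paper: conjugate $\widehat{\mathcal{G}}_{0}$ into $\mathrm{PSL}(2,\bb{R})$ via \fullref{prop:red} and \fullref{thm:gab}, send the common fixed point $\bar{x}$ to $\infty$ so the group becomes affine, then use freeness (\fullref{cor:fix}) to rule out non-unit dilation factors and transitivity to obtain the full translation group. The only cosmetic difference is that you make the simple transitivity of the action on $\bb{S}^{1}\setminus\bar{x}$ and the fixed point $b/(1-a)$ of a non-translation affine map explicit, which the paper leaves implicit.
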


\begin{proof}
By \fullref{thm:gab} and \fullref{prop:red}
$\widehat{\mathcal{G}}_{0}$ is conjugate in \H to a subgroup of
$\mathrm{PSL}(2,\bb{R})$ which fixes the point $\bar{x}$.
Moreover, from \fullref{cor:fix} this is the only point
fixed by a non trivial element. By identifying $\bb{S}^{1}$ with
$\bb{R}\cup\{\infty\}$ so that $\bar{x}$ is identified with
$\{\infty\}$ in the usual way, we see that
$\widehat{\mathcal{G}}_{0}$ is conjugate to a subgroup of the
M\"obius group acting on $\bb{R}\cup\{\infty\}$. Since every
element will fix $\{\infty\}$, their restriction to $\bb{R}$ will
be an element of Aff($\bb{R}$) acting without fixed points, so can
only be a translation. On the other hand the group must act
transitively on $\bb{R}$ and so must be the full group of
translations. This gives the result.
\end{proof}

\begin{prop}
The restriction of the action of $G_{0}$ to $I$ is conjugate to
the action of a subgroup of the affine group $\mathrm{Aff}(\bb{R})$ on $\bb{R}$. In
particular, each non trivial element of $G_{0}$ can act on $I$
with at most one fixed point.
\end{prop}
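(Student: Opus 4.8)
The plan is to transport the statement through the homomorphism $\Phi$ to the quotient circle and there combine the facts already established. First I would record that each $g\in G_{0}$ fixes the two endpoints of $I$ (they are among $x_{1},\dots,x_{n-1}$), so $\Phi(g)$ fixes the identification point $\bar{x}$; hence every element of $\mathcal{G}_{0}$ restricts to a homeomorphism of $\bb{S}^{1}\setminus\bar{x}$. Writing $q\co\bar{I}\to\bb{S}^{1}$ for the quotient that identifies the endpoints of $\bar{I}$ to $\bar{x}$, the restriction $q|_{I}\co I\to\bb{S}^{1}\setminus\bar{x}$ is a homeomorphism intertwining the $G_{0}$--action on $I$ with the $\mathcal{G}_{0}$--action on $\bb{S}^{1}\setminus\bar{x}$. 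So it suffices to show that $\mathcal{G}_{0}$ acts on $\bb{S}^{1}\setminus\bar{x}$ as a subgroup of $\mathrm{Aff}(\bb{R})$.

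By \fullref{cor:tran} there is a homeomorphism $h\co\bb{S}^{1}\setminus\bar{x}\to\bb{R}$ conjugating the action of $\widehat{\mathcal{G}}_{0}$ to the \emph{full} group of translations of $\bb{R}$. The structural fact I would exploit is that $\widehat{\mathcal{G}}_{0}$ is normal in $\mathcal{G}_{0}$: since $\Phi$ is an isomorphism by \fullref{prop:iso}, $\widehat{\mathcal{G}}_{0}=\Phi(\widehat{G}_{0})$ is the path component of the identity in $\mathcal{G}_{0}$, and the identity component is always normal in a topological group. Now take $g\in\mathcal{G}_{0}$; since $g$ fixes $\bar{x}$, the map $hgh^{-1}$ is a homeomorphism of $\bb{R}$. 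For an arbitrary translation $T$, write $T=h\tau h^{-1}$ with $\tau\in\widehat{\mathcal{G}}_{0}$, which is possible precisely because $h$ conjugates $\widehat{\mathcal{G}}_{0}$ onto all of the translation group. Then
$$
(hgh^{-1})\,T\,(hgh^{-1})^{-1}=h\,(g\tau g^{-1})\,h^{-1},
$$
and $g\tau g^{-1}\in\widehat{\mathcal{G}}_{0}$ by normality, so the right-hand side is again a translation. Thus $hgh^{-1}$ conjugates translations to translations, and \fullref{lem:aff} shows it is affine.

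Conjugating by $h$ therefore turns the $\mathcal{G}_{0}$--action on $\bb{S}^{1}\setminus\bar{x}$ into a subgroup of $\mathrm{Aff}(\bb{R})$; pulling back through the homeomorphism $h\circ q|_{I}$ shows that the restriction of $G_{0}$ to $I$ is conjugate to a subgroup of $\mathrm{Aff}(\bb{R})$. The final clause is then immediate: a non-trivial affine map $x\mapsto ax+b$ either has $a\neq 1$, with the single fixed point $b/(1-a)$, or has $a=1$ and $b\neq 0$, with no fixed point; in either case at most one fixed point, and transporting back, each non-trivial element of $G_{0}$ has at most one fixed point on $I$.

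The argument is essentially an assembly of prior results, so I do not anticipate a serious obstacle. The one point needing care — and which I would state explicitly — is that $h$ conjugates $\widehat{\mathcal{G}}_{0}$ onto the \emph{full} translation group rather than merely into it; this surjectivity is exactly what lets me realize every translation as $h\tau h^{-1}$ and is what drives the ``conjugates translations to translations'' step. The secondary point to verify is the normality of $\widehat{\mathcal{G}}_{0}$ in $\mathcal{G}_{0}$, justified via \fullref{prop:iso} together with the general fact that the identity path component of a topological group is normal.
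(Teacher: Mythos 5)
Your proposal is correct and follows essentially the same route as the paper: conjugate the identity path component to the full translation group via \fullref{cor:tran}, use normality of that component in $G_{0}$ (respectively $\mathcal{G}_{0}$) to see that any other element conjugates translations to translations, and invoke \fullref{lem:aff}. The only differences are cosmetic --- you work on the quotient circle and spell out the surjectivity and normality steps that the paper asserts without comment --- so there is nothing to correct.
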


\begin{proof}
The restriction of $\widehat{\mathcal{G}}_{0}$ to $\bb{S}^{1}\setminus\bar{x}$ is isomorphic to the restriction of $\widehat{G}_{0}$ to $I$. So that by \fullref{cor:tran} there exists a homeomorphism
$\phi\co I\to\bb{R} $ which conjugates the restriction of $\widehat{G}_{0}$ to $I$, to the action of $\bb{R}$ on itself
by translation. Take $h\in G_{0}\setminus\widehat{G}_{0}$ then
$h'=\phi\circ h\circ\phi^{-1}$ is a self-homeomorphism of
$\bb{R}$. Since $\widehat{G}_{0}$ is a normal subgroup of $G_{0}$,
$h'$ conjugates every translation to another one and so by \fullref{lem:aff} is itself an affine map and the proof is complete.
\end{proof}

Let $g$ be a
nontrivial element of $G_{0}$, then $g\in\widehat{G}_{0}$ if and only if it acts on each component of
$\bb{S}^{1}\setminus\{x_{1},\dots,x_{n-1}\}$ as a conjugate of a
non trivial translation. Furthermore, if $g\notin\widehat{G}_{0}$
then it acts on each component of
$\bb{S}^{1}\setminus\{x_{1},\dots,x_{n-1}\}$ as a conjugate of a
affine map which is not a translation, each of which must have a
fixed point. This situation cannot actually arise as the next
proposition will show.

\begin{prop}\label{prop:equ}
$G_{0}=\widehat{G}_{0}$
\end{prop}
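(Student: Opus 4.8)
The goal is to show $G_0 = \widehat{G}_0$, i.e.\ that the group $G_0$ fixing $n-1$ points is path-connected. The plan is to argue by contradiction: suppose there exists $h \in G_0 \setminus \widehat{G}_0$. By the discussion immediately preceding the statement, such an $h$ acts on each component of $\bb{S}^1 \setminus \{x_1,\dots,x_{n-1}\}$ as (a conjugate of) an affine map which is \emph{not} a translation, and hence has exactly one fixed point in the interior of each component. So on the interval $I$ itself, $h$ has a single fixed point; call it $p \in I$. The strategy is to exploit this fixed point together with the continuous $n$-transitivity (for $n \geq 3$) to move points around and manufacture a nontrivial element of $\widehat{\mathcal{G}}_0$ that fixes a point of $\bb{S}^1 \setminus \bar{x}$, contradicting \fullref{cor:fix}.

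Here is how I would carry it out. First I would pass through $\Phi$ and work in $\mathcal{G}_0$, where $h' = \Phi(h)$ is an affine, non-translation self-map of $\bb{S}^1 \setminus \bar{x} \cong \bb{R}$ with a unique fixed point. The key idea is that $h'$ conjugates the translation subgroup $\widehat{\mathcal{G}}_0$ to itself, scaling it by the (nonzero, non-unit) affine factor $\lambda$. I would then consider, for a translation $F_x(y) \in \widehat{\mathcal{G}}_0$, the commutator or conjugate $h' \circ F_x(y) \circ (h')^{-1}$, which is again a translation in $\widehat{\mathcal{G}}_0$ but by a different amount. The plan is to use continuous $n$-transitivity to produce a path $g_t \in G_0$ starting at the identity that realizes a suitable motion of points, and then to combine $h$ with conjugates of elements of $\widehat{G}_0$ so as to produce a continuous deformation of the identity in $G_0$ fixing the endpoints of $I$ \emph{and} some extra interior point — i.e.\ fixing $n$ points. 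Applying \fullref{prop:ind} (or directly \fullref{lem:jeq} and \fullref{cor:int}) would then force $G$ to be continuously $n+1$-transitive, contradicting the standing assumption of this section.

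Concretely, the cleanest route I would attempt is the following. Since $h$ has a fixed point $p$ in each component, I would try to connect $h$ to the identity \emph{within} $G_0$ after correcting for the affine scaling: the map $t \mapsto$ (an affine map of $I$ with fixed point $p$ and scaling factor interpolating from $\lambda$ to $1$) is a path of candidate homeomorphisms, and I would use continuous $2$-transitivity together with the bijectivity of $F_x$ (established in the proof of \fullref{prop:red}) to lift this to an actual path in $G_0$ fixing the $x_i$ and fixing $p$ throughout. That path, composed appropriately with $h$, gives a continuous deformation of the identity fixing the $n$ points $x_1,\dots,x_{n-1},p$ and moving something, whence \fullref{prop:ind} yields continuous $(n+1)$-transitivity, the desired contradiction.

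The main obstacle I anticipate is the lifting step: continuous transitivity guarantees paths realizing prescribed \emph{point} motions, but here I need a path in $G_0$ that realizes a prescribed \emph{conjugacy/scaling} behavior while pinning down the interior fixed point $p$. The delicate point is that an element of $G_0$ is determined by its action on $I$, which by the preceding proposition is affine, and the affine scaling factor is exactly the obstruction to lying in $\widehat{G}_0$; so I must verify that the interpolating family of affine maps (with fixed point $p$) is actually realized by genuine elements of $G_0$, not merely of $\mathrm{Aff}(\bb{R})$. I expect this to follow from the bijection $F_x \co \bb{S}^1 \setminus \bar{x} \to \widehat{\mathcal{G}}_0$ of \fullref{prop:red} combined with the fact that $h$ conjugates $\widehat{G}_0$ into itself, so that the scaled translations $h \circ \widehat{G}_0 \circ h^{-1}$ are available inside $G_0$; marshalling these into a single continuous deformation fixing $p$ is the crux of the argument.
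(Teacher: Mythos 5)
There is a genuine gap, and it sits exactly where you flagged it: the interpolating family of affine maps cannot be realized inside $G_{0}$, and the tools you invoke cannot produce it. Since $\widehat{G}_{0}$ is normal in $G_{0}$, the ``scaled translations'' $h\circ\widehat{G}_{0}\circ h^{-1}$ are not new elements at all: in affine coordinates (taking the fixed point $p$ to be $0$) one has $h\circ T_{s}\circ h^{-1}=T_{\lambda s}$, so $h\widehat{G}_{0}h^{-1}=\widehat{G}_{0}$ exactly; likewise $h\circ T_{s}\circ h^{-1}\circ T_{-s}=T_{(\lambda-1)s}$ is a nontrivial translation, so conjugates and commutators never yield an element of $\widehat{\mathcal{G}}_{0}$ fixing a point, and \fullref{cor:fix} is never contradicted. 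More decisively, the affine factor is a homomorphism from (the restriction of) $G_{0}$ to $(\bb{R}^{>0},\times)$, so every element you can build out of $h$ and $\widehat{G}_{0}$ has factor in the discrete group $\{\lambda^{k}:k\in\bb{Z}\}$; no path in $G_{0}$ through such elements can have its factor move continuously from $\lambda$ to $1$. (Note also that the existence of \emph{any} path in $G_{0}$ from $h$ to the identity is literally the statement $h\in\widehat{G}_{0}$, so the lifting step you defer is the whole proposition.) The failure is not repairable within one component: the group $\{x\mapsto\lambda^{k}x+c:k\in\bb{Z},\ c\in\bb{R}\}$ contains all translations, has every nontrivial element fixing at most one point, and has the translations as its identity path component (the factor is continuous along paths, hence constant, by discreteness), so it is consistent with everything established so far about the restriction of $G_{0}$ to $I$. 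An argument confined to $I$ cannot rule it out.

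The paper's proof supplies precisely the global ingredient your attempt lacks. A nontrivial $g\in G_{0}\setminus\widehat{G}_{0}$ has a fixed point in \emph{every} component of $\bb{S}^{1}\setminus\{x_{1},\dots,x_{n-1}\}$, and since $n\geq 3$ there are at least two components. Let $y_{1},y_{2}$ be the fixed points of $g$ in the two components whose closures contain $x_{1}$. By $n$--transitivity of the ambient group $G$ there exists $g'\in G$ fixing $x_{2},\dots,x_{n-1}$ and sending $y_{1}$ to $x_{1}$. Then $g'\circ g\circ(g')^{-1}$ lies in $G_{0}$, is nontrivial, and fixes both $g'(x_{1})$ and $g'(y_{2})$; because $g'$ is orientation preserving and moves $y_{1}$ to $x_{1}$, both of these points land in the same component of $\bb{S}^{1}\setminus\{x_{1},\dots,x_{n-1}\}$, namely the one on the far side of $x_{1}$ from $y_{1}$. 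This contradicts the preceding proposition, by which a nontrivial element of $G_{0}$ has at most one fixed point in each component. The contradiction thus comes from the interaction between \emph{different} components, mediated by the transitivity of $G$ itself --- not from any analysis of the affine action on a single interval.
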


\begin{proof}
Let $g\in G_{0}\setminus\widehat{G}_{0}$, then $g$ acts on each
component of $\bb{S}^{1}\setminus\{x_{1},\dots,x_{n-1}\}$ as a
conjugate of a affine map which is not a translation.
Consequently, $g$ will have a fixed point in each component of
$\bb{S}^{1}\setminus\{x_{1},\dots,x_{n-1}\}$. Label the fixed
points of $g$ in the components of
$\bb{S}^{1}\setminus\{x_{1},\dots,x_{n-1}\}$ whose boundaries both
contain $x_{1}$ as $y_{1}$ and $y_{2}$. Since $G$ is $n$--transitive,
there exists a map $g'$ which sends $y_{1}$ to $x_{1}$ and fixes
all the other $x_{i}$. Then $g'\circ g\circ (g')^{-1}$ fixes all
the $x_{i}$ and hence is an element of $G_{0}$. On the other hand,
$g'\circ g\circ (g')^{-1}$ also fixes $g'(x_{1})$ and $g'(y_{2})$
which lie in the same component of
$\bb{S}^{1}\setminus\{x_{1},\dots,x_{n-1}\}$, this is impossible
since every non-trivial element of $G_{0}$ can only have one fixed
point in each component of
$\bb{S}^{1}\setminus\{x_{1},\dots,x_{n-1}\}$.
\end{proof}

\begin{cor}\label{cor:trans}
The restriction of the action of $G_{0}$ to $I$ is conjugate to the action of $\bb{R}$ on itself by translation. In particular the action is free.
\end{cor}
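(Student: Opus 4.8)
The plan is to combine the identity $G_{0}=\widehat{G}_{0}$ just established in \fullref{prop:equ} with the description of $\widehat{\mathcal{G}}_{0}$ obtained in \fullref{cor:tran}. Since by \fullref{prop:equ} the two groups $G_{0}$ and $\widehat{G}_{0}$ coincide, everything reduces to transporting the translation structure found on the quotient circle back to the interval $I$.

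First I would recall how the homomorphism $\Phi\co G_{0}\to\mathcal{G}_{0}$ of \fullref{prop:iso} is built: each $g\in G_{0}$ is restricted to the closed interval $\bar{I}$ and the endpoints of $\bar{I}$ are then collapsed to the single point $\bar{x}$. Under this construction the open interval $I$ corresponds exactly to $\bb{S}^{1}\setminus\bar{x}$, and the action of $g$ on $I$ is carried precisely to the action of $\Phi(g)$ on $\bb{S}^{1}\setminus\bar{x}$. In particular $\Phi$ restricts to an isomorphism of $\widehat{G}_{0}$ onto $\widehat{\mathcal{G}}_{0}=\Phi(\widehat{G}_{0})$ which intertwines the action of $\widehat{G}_{0}$ on $I$ with the action of $\widehat{\mathcal{G}}_{0}$ on $\bb{S}^{1}\setminus\bar{x}$.

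Next I would invoke \fullref{cor:tran}, which states that the action of $\widehat{\mathcal{G}}_{0}$ on $\bb{S}^{1}\setminus\bar{x}$ is conjugate to the action of $\bb{R}$ on itself by translation. Composing this conjugacy with the action-preserving identification of $I$ and $\bb{S}^{1}\setminus\bar{x}$ from the previous step yields a homeomorphism from $I$ to $\bb{R}$ conjugating the restriction of $\widehat{G}_{0}$ to $I$ to the translation action. Finally, since $G_{0}=\widehat{G}_{0}$ by \fullref{prop:equ}, this is exactly the restriction of $G_{0}$ to $I$, which gives the first assertion.

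For the last sentence, freeness is immediate: the translation action of $\bb{R}$ on itself is free, as only the zero translation has a fixed point, and freeness is preserved under conjugation. Alternatively it follows directly from \fullref{cor:fix}, since any nontrivial element of $G_{0}=\widehat{G}_{0}$ fixing a point of $I$ would correspond under $\Phi$ to a nontrivial element of $\widehat{\mathcal{G}}_{0}$ fixing a point of $\bb{S}^{1}\setminus\bar{x}$. I do not expect any genuine obstacle here; the only point requiring care is verifying that the endpoint-collapsing construction of $\Phi$ matches $I$ with $\bb{S}^{1}\setminus\bar{x}$ in an action-preserving manner, which is immediate from the definition of $\Phi$.
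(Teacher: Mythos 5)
Your proposal is correct and follows essentially the same route as the paper: the paper states this corollary without a separate proof precisely because it is the immediate combination of \fullref{prop:equ} ($G_{0}=\widehat{G}_{0}$) with \fullref{cor:tran}, transported through the endpoint-collapsing identification of $I$ with $\bb{S}^{1}\setminus\bar{x}$ used to define $\Phi$ (an identification the paper already exploits in the proposition preceding \fullref{prop:equ}). Your freeness argument, whether via freeness of translations or via \fullref{cor:fix}, is likewise exactly what the paper intends.
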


We finish this section by comparing the directions that a
non-trivial element of $G_{0}$ moves points in different
components of $\bb{S}^{1}\setminus\{x_{1},\dots,x_{n-1}\}$. So
endow $\bb{S}^{1}$ with the anti-clockwise orientation, this gives
us an ordering on any interval $I\subset\bb{S}^{1}$, where for
distinct points $x,y\in I$, $x\prec y$ if one travels in an
anti-clockwise direction to get from $x$ to $y$ in $I$. Let $g\in
G_{0}\setminus\{\id\}$ if $I$ is a component of
$\bb{S}^{1}\setminus\{x_{1},\dots,x_{n-1}\}$ then we shall say
that $g$ acts \emph{positively} on $I$ if $x\prec g(x)$ and
\emph{negatively} if $x\succ g(x)$ for one and hence every $x\in
I$.

Let $I$ and $I'$ be the two components of
$\bb{S}^{1}\setminus\{x_{1},\dots,x_{n-1}\}$ whose boundaries
contain $x_{i}$. Labeled so that in the order on the closure of $I$, $x\prec
x_{i}$ for each $x\in I$, whereas in the order on the closure of $I'$,
$x_{i}\prec x$ for each $x\in I'$. Then we have the following,

\begin{prop} \label{prop:dir}

Let $g$ be a non trivial element of $G_{0}$, if $g$ acts
positively on $I$ then it acts negatively on $I'$ and if $g$ acts
negatively on $I$ then it acts positively on $I'$.
\end{prop}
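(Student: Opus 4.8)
The plan is to argue by contradiction, assuming that $g$ acts positively on both $I$ and $I'$, and to exploit the fact that $G_{0}$ is a one–parameter group. First note that replacing $g$ by $g^{-1}$ interchanges ``positive'' and ``negative'' on every component simultaneously, so it is enough to prove the single implication: if $g$ acts positively on $I$ then it acts negatively on $I'$. By \fullref{cor:trans} the restriction of $G_{0}$ to $I$ is conjugate to the translation action of $\bb{R}$, and since a nontrivial element of $G_{0}$ cannot act as the identity on an interval (otherwise \fullref{cor:int} would make $G$ continuously $n+1$–transitive), this restriction is injective. Hence $G_{0}\cong\bb{R}$ is abelian (this is also where \fullref{prop:equ} is used), and every nontrivial element acts on each of the $n-1$ components of $\bb{S}^{1}\setminus\{x_{1},\dots,x_{n-1}\}$ as a conjugate of a nontrivial translation. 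Writing $G_{0}=\{g^{t}:t\in\bb{R}\}$, the vector of directions (``positive'' or ``negative'') of $g^{t}$ over the $n-1$ components equals $\mathrm{sign}(t)$ times a fixed pattern $\epsilon\in\{+,-\}^{n-1}$; the assertion to be proved is exactly that $\epsilon$ takes opposite values on the two components meeting at $x_{i}$.

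The key step is to pin down $\epsilon$ by conjugating by a cyclic permutation of the anchor points. Since $G$ is continuously $n-1$–transitive and the tuples $(x_{1},\dots,x_{n-1})$ and $(x_{2},\dots,x_{n-1},x_{1})$ have matching orientations, there is $c\in G$ with $c(x_{j})=x_{j+1}$ (indices mod $n-1$); being orientation preserving, $c$ carries each component to the next one cyclically. Then $cG_{0}c^{-1}=G_{0}$, so conjugation by $c$ is a continuous automorphism of $G_{0}\cong\bb{R}$ and hence $c\,g^{t}c^{-1}=g^{\lambda t}$ for some fixed $\lambda\in\bb{R}$; on the other hand the direction pattern of $c\,g^{t}c^{-1}$ is the cyclic shift of that of $g^{t}$. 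Comparing the two descriptions, the cyclic shift of $\epsilon$ equals $\mathrm{sign}(\lambda)\,\epsilon$, and therefore $\epsilon$ is either constant (all components point the same way) or strictly alternating. The alternating case is precisely the conclusion of the proposition, so everything comes down to ruling out the constant pattern.

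Excluding the constant pattern is the main obstacle. In that case the shift relation gives $\mathrm{sign}(\lambda)=+$, and applying it $n-1$ times together with the fact that $c^{\,n-1}\in G_{0}$ is central in the abelian group $G_{0}$ yields $g^{\lambda^{n-1}t}=c^{\,n-1}g^{t}c^{-(n-1)}=g^{t}$, so $\lambda=1$; that is, $c$ centralizes all of $G_{0}$. Thus a constant pattern produces a fixed–point–free element $c$, cyclically permuting the anchors, that commutes with the whole one–parameter group $G_{0}$. I expect this to be incompatible with the standing assumption that $G$ is genuinely continuously $2$–transitive: the element $c$ manufactures an invariant cyclic symmetry, and pushing this (first for $G_{0}$, then propagating it across $G$ by continuous transitivity) should exhibit $G$ as a nontrivial cyclic cover of a continuously $2$–transitive group, contradicting \fullref{thm:nempty}, where the cyclic–cover case was exactly the alternative to honest continuous $2$–transitivity. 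Making this last contradiction airtight — in particular upgrading ``$c$ centralizes $G_{0}$'' to a genuine global symmetry of $G$ — is the delicate point of the argument. Once the constant pattern is excluded, $\epsilon$ alternates, which by the orientation conventions fixed before the statement says exactly that $g$ acts negatively on $I'$ whenever it acts positively on $I$, completing the proof.
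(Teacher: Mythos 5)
Your argument has a genuine gap, and it is exactly at the crux of the proposition. The first two steps are sound (modulo minor continuity points about the parametrisation $t\mapsto g^{t}$): by \fullref{cor:trans}, \fullref{cor:int} and \fullref{prop:equ}, $G_{0}$ is a one-parameter group $\{g^{t}\}$ acting freely on each complementary component, so the direction pattern of $g^{t}$ is $\mathrm{sign}(t)$ times a fixed pattern $\epsilon$; and conjugating by a cyclic permutation $c$ of the anchors shows $\epsilon$ is either constant or alternating. But the proposition is precisely the statement that the constant pattern does not occur, and at that point you stop proving and start hoping ("I expect this to be incompatible\dots", "making this last contradiction airtight\dots is the delicate point"). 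That missing step is not a deferrable technicality: it is the entire content of the result, since a single homeomorphism fixing $x_{1},\dots,x_{n-1}$ and moving every complementary interval anticlockwise certainly exists in $\mathrm{Homeo}(\bb{S}^{1})$, so only the interaction of $G_{0}$ with the rest of $G$ can exclude it.

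Moreover, the route you sketch cannot be completed from the data you have assembled. From "$c$ centralizes $G_{0}$" one can indeed extract a finite-order element: writing $c^{\,n-1}=g^{s}$, the element $r=c\circ g^{-s/(n-1)}$ has order $n-1$, cyclically permutes the anchors, and centralizes $G_{0}$. But this whole configuration is realizable inside $\mathrm{Homeo}(\bb{S}^{1})$: put the anchors at the $(n-1)$-st roots of unity, let $r$ be rotation by $2\pi/(n-1)$, choose any one-parameter flow on one complementary arc moving points anticlockwise and copy it to the other arcs by $r$. Hence no contradiction follows from "constant pattern $+$ one-parameter $G_{0}$ $+$ cyclic symmetry" alone; you would have to upgrade "$r$ centralizes $G_{0}$" to "$r$ centralizes $G$" to invoke the cyclic-cover alternative of \fullref{thm:nempty}, and you give no argument for that (nor is one apparent — it needs elements of $G$ that do not normalize $G_{0}$). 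The paper supplies exactly the missing geometric input, and does so directly rather than by contradiction: pick $x\prec x'$ in $I$ and $y\succ y'$ in $I'$; by continuous $n$--transitivity there is $g\in G$ fixing all anchors except $x_{i}$ with $g(x)=x'$ and $g(y)=y'$; since $g$ maps the closed arc from $x$ to $y$ through $x_{i}$ onto the sub-arc from $x'$ to $y'$, it has a fixed point $\tilde{x}$ in that sub-arc; conjugating by $g'\in G$ which fixes the other anchors and sends $\tilde{x}$ to $x_{i}$ yields a nontrivial $g_{0}\in G_{0}$ acting positively on $I$ and negatively on $I'$. Your own connectedness-plus-freeness argument (which is also the paper's closing step) then transfers this mixed pattern to every nontrivial element of $G_{0}$. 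Replacing your speculative cyclic-cover contradiction by this fixed-point construction would repair the proof.
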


\begin{proof}
Let $x,x'\in I$ and $y,y'\in I'$ be points such that $x\prec x'$
and $y\succ y'$. There exists $g\in G$ fixing
$x_{1},\dots,x_{i-1}$ and $x_{i+1},\dots,x_{n-1}$ and sending $x$
to $x'$ and $y$ to $y'$. This map will have a fixed point
$\tilde{x}$ between $x'$ and $y'$, since it maps the interval
between them into itself.

Let $g'\in G$ fix $x_{1},\dots,x_{i-1}$ and
$x_{i+1},\dots,x_{n-1}$ and send $\tilde{x}$ to $x_{i}$. Then
$g_{0}=g'\circ g\circ (g')^{-1}$ will fix $x_{1},\dots,x_{n-1}$
and hence lie in $G_{0}$. Moreover, $g_{0}$ acts positively on $I$
and negatively on $I'$.

Now let $g_{1}\in G_{0}$ be any non-trivial element which acts
positively on $I$. Then there exists a path $g_{t}$ in $G_{0}$
from $g_{0}=g'\circ g\circ (g')^{-1}$ to $g_{1}$, so that
$g_{t}\neq \id$ for any $t$. Since $g_{t}$ is never the identity
and $g_{0}$ acts negatively on $I'$, $g_{1}$ must also act
negatively on $I'$.

If $h\in G_{0}$ is a non-trivial element which acts negatively on
$I$, then $h^{-1}$ will act positively on $I$. So that, by the
above argument, $h^{-1}$ will act negatively on $I'$. This means
that $h$ will act positively on $I'$ as required.
\end{proof}

\begin{cor}
If $G$ is $n$--transitive but not $n+1$--transitive for $n\geq 3$
then $n$ is odd.
\end{cor}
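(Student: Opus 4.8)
The plan is to fix a single nontrivial element $g \in G_0$ and track, component by component, whether $g$ acts positively or negatively on each of the $n-1$ components of $\bb{S}^1 \setminus \{x_1,\dots,x_{n-1}\}$. \fullref{prop:dir} says this sign reverses each time we cross one of the points $x_i$, and since a full trip around the circle crosses exactly $n-1$ of these points, closing up the sign pattern consistently will force $n-1$ to be even, i.e.\ $n$ odd.

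First I would produce such a $g$. By \fullref{cor:trans} the restriction of $G_0$ to any component is conjugate to the full group of translations of $\bb{R}$, so $G_0 \neq \{\id\}$; pick any nontrivial $g$. Applying \fullref{cor:trans} to each component in turn, $g$ restricts on \emph{every} component to a conjugate of a nontrivial translation, hence has no fixed point in the interior of any component, and so acts either positively or negatively on each of them in the sense defined just before \fullref{prop:dir}. Now enumerate the components as $A_1,\dots,A_{n-1}$ in anticlockwise cyclic order, so that $A_{k-1}$ and $A_k$ share exactly one boundary point (one of the $x_i$), with indices read modulo $n-1$, and let $\epsilon_k \in \{+,-\}$ record the sign of the action of $g$ on $A_k$.

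Applying \fullref{prop:dir} at the common endpoint of $A_{k-1}$ and $A_k$ gives $\epsilon_k = -\epsilon_{k-1}$ for every $k$ read modulo $n-1$ (the case $k=1$ being the relation across the point shared by $A_{n-1}$ and $A_1$). Iterating this relation once around the circle yields $\epsilon_1 = (-1)^{n-1}\epsilon_1$, and since $\epsilon_1 \neq 0$ we must have $(-1)^{n-1}=1$, that is, $n-1$ even and hence $n$ odd. The only point requiring care is the bookkeeping: one must check that \fullref{prop:dir} genuinely applies at each of the $n-1$ adjacencies — which it does, since each $x_i$ is precisely the common boundary of the two components flanking it, exactly the hypothesis of that proposition — and that the number of sign reversals around the loop equals the number of points $x_i$, namely $n-1$, so that the pattern closes up consistently if and only if $n-1$ is even. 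Everything else is delivered directly by \fullref{cor:trans} and \fullref{prop:dir}.
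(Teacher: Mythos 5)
Your proof is correct and is essentially the paper's own argument: both use \fullref{prop:dir} to show the sign of the action of a nontrivial $g\in G_{0}$ alternates across each of the $n-1$ points $x_{i}$, so that consistency of the sign pattern around the circle forces $n-1$ to be even. Your version merely makes the parity bookkeeping explicit and fills in the (easy) existence of a nontrivial $g$ acting without fixed points on every component, which the paper leaves implicit.
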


\begin{proof}
Let $g$ be a non-trivial element of $G_{0}$ which acts positively
on some component $I$ of
$\bb{S}^{1}\setminus\{x_{1},\dots,x_{n-1}\}$. Then by \fullref{prop:dir} as we travel around $\bb{S}^{1}$ in an
anti-clockwise direction the manner in which it acts on each
component will alternate between negative and positive.
Consequently, if $n$ was even, when we return to $I$ we would
require that $g$ acted negatively on $I$, a contradiction, so $n$
is odd.
\end{proof}

\section{Continuous 3--transitivity and beyond}

We begin this section by analyzing the case where $G$ is
continuously 3--transitive but not continuously 4--transitive. We
shall show that such a group is a convergence group and
consequently conjugate to a subgroup of $\mathrm{PSL}(2,\bb{R})$.

Fix distinct points $x_{0},y_{0}\in\bb{S}^{1}$ and define
$$
G_{0}=\{g\in G:g(x_{0})=x_{0},g(y_{0})=y_{0}\}
$$
$$
\bar{G}=\{g\in G: g(x_{0})=x_{0}\}
$$
then we have the following propositions.

\begin{prop}\label{prop:hyp}
$G_{0}$ is a convergence group.
\end{prop}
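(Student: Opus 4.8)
### Proof Proposal

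The plan is to show that $G_0 = \{g \in G : g(x_0) = x_0, g(y_0) = y_0\}$ is a convergence group by reducing to the machinery already developed in \fullref{sec:con}. The crucial observation is that $G_0$ fixes two points $x_0, y_0$, which partition $\mathbb{S}^1$ into exactly two components; every element of $G_0$ preserves each of these two arcs (being orientation preserving). This is precisely the $n-1 = 2$ fixed-point situation, so I expect $G_0$ here to play the role of the subgroup studied via the homomorphism $\Phi$ in the previous section, with $n = 3$. Since we are assuming $G$ is continuously $3$--transitive but not continuously $4$--transitive, all the results of \fullref{sec:con} apply with $n = 3$.

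First I would take one of the two components $I$ of $\mathbb{S}^1 \setminus \{x_0, y_0\}$ and form the quotient homeomorphism group $\mathcal{G}_0 = \Phi(G_0)$ obtained by collapsing the endpoints of $\bar{I}$ to a single point $\bar{x}$, exactly as in the construction preceding \fullref{prop:iso}. By \fullref{prop:iso}, $\Phi$ is an isomorphism of $G_0$ onto $\mathcal{G}_0$, so it suffices to prove that $\mathcal{G}_0$ is a convergence group. The key input is \fullref{prop:red}, which establishes that the identity path component $\widehat{\mathcal{G}}_0 = \Phi(\widehat{G}_0)$ is a convergence group. By \fullref{prop:equ} we have $G_0 = \widehat{G}_0$, and hence $\mathcal{G}_0 = \widehat{\mathcal{G}}_0$. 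Therefore $\mathcal{G}_0$ is itself a convergence group.

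The remaining point is to transfer the convergence property from $\mathcal{G}_0$ back to $G_0$, and then to argue that the convergence behavior on the collapsed circle reflects genuine convergence behavior of $G_0$ acting on $\mathbb{S}^1$. Since $\Phi$ is an isomorphism and the action of $\mathcal{G}_0$ on its circle is the action of $G_0$ on $\bar{I}$ with endpoints identified, a convergent or escaping sequence in $\mathcal{G}_0$ corresponds to a sequence in $G_0$ whose restriction to $\bar{I}$ exhibits the same dynamics. By \fullref{cor:trans}, the restriction of $G_0$ to $I$ is conjugate to the translation action of $\mathbb{R}$ on itself, and by the symmetric argument the restriction to the other component $I'$ is similarly tame; using \fullref{prop:dir} to control the relative directions on $I$ and $I'$, one sees that a sequence $g_n \in G_0$ either converges uniformly to an element of $G_0$ or collapses to one fixed point with inverses collapsing to the other, which is exactly the convergence dichotomy applied across all of $\mathbb{S}^1$.

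The main obstacle I anticipate is the last transfer step: \fullref{prop:red} only gives the convergence property for $\mathcal{G}_0$ acting on the \emph{quotient} circle, where the two boundary fixed points have been merged into the single point $\bar{x}$. I must verify that when the sequence $g_{n_k}$ degenerates to $\bar{x}$ in the quotient, the genuine dynamics on $\mathbb{S}^1$ separate cleanly into attraction toward $x_0$ on one component and toward $y_0$ on the other, so that $g_{n_k} \to x_0$ and $g_{n_k}^{-1} \to y_0$ (or vice versa) uniformly on compact subsets of the respective complements, rather than producing some pathological limit at the single merged point. Resolving this requires using the affine (in fact translation, by \fullref{cor:trans}) structure on each of $I$ and $I'$ together with the orientation constraint of \fullref{prop:dir}, which forces the two endpoints of each component to act as a genuine attracting/repelling pair in the full circle.
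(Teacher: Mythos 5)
Your proposal is correct, and the part of it that does the real work coincides with the paper's proof; the difference is your opening detour. The paper's proof of \fullref{prop:hyp} never passes to the quotient circle of \fullref{sec:con}: it takes a sequence $g_{n}\in G_{0}$ and works directly on the two components of $\bb{S}^{1}\setminus\{x_{0},y_{0}\}$, using \fullref{cor:trans} to get a converge-or-escape dichotomy for the translation parameters on each component, \fullref{prop:dir} to rule out the mixed case (escape on one component, convergence on the other), and finally gluing the two limiting arc homeomorphisms at $x_{0}$ and $y_{0}$. Your first step --- that $\mathcal{G}_{0}=\widehat{\mathcal{G}}_{0}$ is a convergence group, via \fullref{prop:iso}, \fullref{prop:red} and \fullref{prop:equ} --- is valid but buys almost nothing: the dichotomy it supplies lives only on the collapsed copy of $\bar{I}$, and that dichotomy is already immediate from \fullref{cor:trans}, since a sequence of translations of $\bb{R}$ either subconverges or diverges. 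As you yourself flag, the crux is the transfer across the two components, and the tools you name for it are exactly the ones the paper uses, so a completed write-up of your plan would contain the paper's proof as its second half. To nail down the linkage you gesture at: \fullref{cor:trans} gives two isomorphisms $\tau,\sigma\co G_{0}\to\bb{R}$ (the translation parameters of the actions on $I$ and on $I'$), and \fullref{prop:dir} says that $\sigma\circ\tau^{-1}$ reverses sign; a sign-reversing additive bijection of $\bb{R}$ is monotone, hence a homeomorphism of $\bb{R}$, so the parameters on the two components converge or blow up simultaneously. This yields both cases of the convergence property at once, and it even gives a small bonus over the paper's gluing argument: in the convergent case the uniform limit is $\tau^{-1}\bigl(\lim\tau(g_{n_{k}})\bigr)$, an element of $G_{0}$ itself, a point the paper leaves implicit.
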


\begin{proof}
From \fullref{cor:trans}, we know that the
restriction of $G_{0}$ to each of the components of
$\bb{S}^{1}\setminus\{x_{0},y_{0}\}$ is conjugate to the action of
$\bb{R}$ on itself by translation. Let $g_{n}$ be a sequence of
distinct elements of $G_{0}$ and take a point
$x\in\bb{S}^{1}\setminus\{x_{0},y_{0}\}$. Then the sequence of
points $g_{n}(x)$ will have a convergent subsequence
$g_{n_{k}}(x)$. If this sequence converges to $x_{0}$ or $y_{0}$,
then from \fullref{prop:dir} so will the sequences
$g_{n_{k}}(y)$ for all $y\in\bb{S}^{1}\setminus\{y_{0}\}$ or
$\bb{S}^{1}\setminus\{x_{0}\}$ respectively.

Let $I_{x}$ be the component of
$\bb{S}^{1}\setminus\{x_{0},y_{0}\}$ containing $x$. Assume that
the sequence of points $g_{n_{k}}(x)$ converges to a point $x'\in I_{x}$. Now let $y$ be a point in the other component,
$I_{y}$ of $\bb{S}^{1}\setminus\{x_{0},y_{0}\}$, and consider the
sequence of points $g_{n_{k}}(y)$ in $I_{y}$. If it had a
subsequence which converged to $x_{0}$ or $y_{0}$ then the
sequence $g_{n_{k}}(x)$ would have to as well. This is impossible
so $g_{n_{k}}(y)$ must stay within a compact subset of $I_{y}$ and
hence $g_{n_{k}}$ has a subsequence, $g_{n_{k_{l}}}$ for which $g_{n_{k_{l}}}(y)$ converges to some point $y'\in I_{y}$.

By \fullref{cor:trans}
there exist self homeomorphisms of $I_{x}$ and $I_{y}$ to which the
sequence $g_{n_{k_{l}}}$ converges uniformly on $I_{x}$ and $I_{y}$
respectively. Gluing these together at $x_{0}$ and $y_{0}$ gives
us an element of Homeo($\bb{S}^{1}$) which $g_{n_{k}}$
converges to uniformly. Consequently, $G_{0}$ is a convergence
group.
\end{proof}

\begin{prop}\label{prop:para}
$\bar{G}$ is a convergence group.
\end{prop}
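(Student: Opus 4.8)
The plan is to reduce the convergence property for $\bar G$ to the one already established for two-point stabilisers in \fullref{prop:hyp}. First I would record two preliminary facts. Since $G$ is continuously $3$--transitive it is continuously $2$--transitive, so \fullref{lem:equ} (condition 2 with $n=2$ and $a_{1}=x_{0}$) supplies, for each $x\in\bb{S}^{1}\setminus\{x_{0}\}$, a continuous section
$$
F_{x}\co\bb{S}^{1}\setminus\{x_{0}\}\to\bar G,\qquad (F_{x}(v))(x)=v,\quad F_{x}(x)=\id ,
$$
every value of which fixes $x_{0}$ (here $I_{x}=\bb{S}^{1}\setminus\{x_{0}\}$ is connected). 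As the target carries the uniform metric $d_{G}$, a convergence $v_{n}\to v$ forces $F_{x}(v_{n})\to F_{x}(v)$ uniformly together with their inverses. Secondly, \fullref{prop:hyp} was proved for an arbitrary pair of distinct points, so the stabiliser $\mathrm{Stab}(x_{0},x)=\{g\in G:g(x_{0})=x_{0},\,g(x)=x\}$ is a convergence group for every $x\neq x_{0}$.

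Now take a sequence of distinct elements $g_{n}\in\bar G$ and split into cases by the forward and backward dynamics on $\bb{S}^{1}\setminus\{x_{0}\}$. If there is a point $x\neq x_{0}$ and a subsequence with $g_{n_{k}}(x)\to\xi\neq x_{0}$, I factor $g_{n_{k}}=F_{x}(g_{n_{k}}(x))\circ m_{n_{k}}$, where $m_{n_{k}}=\bigl(F_{x}(g_{n_{k}}(x))\bigr)^{-1}\circ g_{n_{k}}$ fixes both $x_{0}$ and $x$ and hence lies in the convergence group $\mathrm{Stab}(x_{0},x)$. Because $F_{x}(g_{n_{k}}(x))\to F_{x}(\xi)$, a homeomorphism, uniformly and with inverses, a type-1 or type-2 subsequence of $(m_{n_{k}})$ furnished by \fullref{prop:hyp} composes with these convergent homeomorphisms to yield a type-1 or type-2 subsequence of $(g_{n})$ (if the $m_{n_{k}}$ are not eventually constant, first extract a distinct subsequence). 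This composition step is the routine verification that locally uniform limits are preserved under pre- and post-composition by uniformly convergent homeomorphisms, carried out for both kinds of convergence. The same conclusion holds if instead some point $x'\neq x_{0}$ has a subsequence with $g_{n_{k}}^{-1}(x')\to\xi'\neq x_{0}$: apply the argument to $g_{n}^{-1}\in\bar G$ and use that the convergence property is invariant under inversion.

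The remaining case is that $g_{n}(x)\to x_{0}$ for every $x\neq x_{0}$ and $g_{n}^{-1}(x')\to x_{0}$ for every $x'\neq x_{0}$; I claim this is precisely type-2 convergence with both limit points equal to $x_{0}$, once the pointwise collapse is upgraded to uniform convergence on compact subsets of the arc $\bb{S}^{1}\setminus\{x_{0}\}$. Suppose uniformity fails: after passing to a subsequence there are $c_{n}\to c_{\infty}\neq x_{0}$ with $g_{n}(c_{n})\to\eta\neq x_{0}$. Choosing $\alpha\prec\eta\prec\beta$ close to $\eta$ and away from $x_{0}$, orientation preservation gives $g_{n}^{-1}(\alpha)\prec c_{n}\prec g_{n}^{-1}(\beta)$ with both outer endpoints tending to $x_{0}$ by hypothesis. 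Using the linear order on $\bb{S}^{1}\setminus\{x_{0}\}$ one checks that the subarc trapped between them cannot fill the whole arc, for otherwise $g_{n}$ would push a fixed interior point into the fixed neighbourhood $(\alpha,\beta)$ of $\eta$, contradicting $g_{n}(x)\to x_{0}$; hence that subarc shrinks to $x_{0}$, forcing $c_{n}\to x_{0}$, a contradiction. Thus $g_{n}\to x_{0}$ uniformly on compacta, and symmetrically for $g_{n}^{-1}$, which is the required type-2 behaviour.

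The main obstacle is organising this trichotomy so that every ``spreading'' sequence is caught by one of the first two cases. A sequence can send every fixed point forward to $x_{0}$ and yet fail to converge uniformly — think of a steepening family that carries a moving point to the middle of the circle — and such a sequence is detected only through its backward orbit $g_{n}^{-1}$. This is exactly why both the forward and the backward conditions must be used, both in the case split and in the final uniform upgrade; getting this bookkeeping right, rather than any single estimate, is where the care is needed.
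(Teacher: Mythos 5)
Your proof is correct, and its engine is the same as the paper's: factor the given sequence through a two-point stabiliser using the continuous section $F_{x}$ provided by \fullref{lem:equ}, then quote \fullref{prop:hyp}. The differences are in the packaging and, in one place, in substance. Cosmetically, the paper pushes everything into the single fixed group $G_{0}=\{g\in G: g(x_{0})=x_{0},\,g(y_{0})=y_{0}\}$ by composing with auxiliary elements $g_{1},g_{2}$ that carry the relevant points to $y_{0}$, whereas you invoke the stabiliser of the pair $(x_{0},x)$ for the point $x$ at hand; this is legitimate because \fullref{prop:hyp} was proved for an arbitrary pair of distinct points. Substantively, the paper's case split is on forward orbits only: it handles the escape case by the factorisation, and when $f_{n}(y)\to x_{0}$ for every $y\neq x_{0}$ it simply declares the proposition proved. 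That step is a genuine gloss: pointwise forward collapse alone does not give the convergence property with both limit points at $x_{0}$ --- identifying the arc $\bb{S}^{1}\setminus\{x_{0}\}$ with $\bb{R}$, the maps $y\mapsto n^{2}y+n$ send every fixed point to $x_{0}$ while their inverses converge to $0$, so the backward limit point can be an interior point, and uniform collapse requires proof. Your trichotomy closes exactly this hole: backward escape is caught by applying the factorisation to $g_{n}^{-1}$ (the two convergence conditions are inversion-invariant), and in the doubly-collapsing case your trapping argument --- $g_{n}^{-1}(\alpha)\prec c_{n}\prec g_{n}^{-1}(\beta)$, with the trapped subarc either absorbed into an arbitrary neighbourhood of $x_{0}$ or else forced to contain a fixed point whose image stays near $\eta$, contradicting forward collapse --- correctly upgrades pointwise to locally uniform convergence. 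So your route is the paper's route carried out in full; what it buys is a complete treatment of the degenerate case that the paper waves through in a single sentence.
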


\begin{proof}
Let $f_{n}$ be a sequence of elements of $\bar{G}$. If for every
$y\in\bb{S}^{1}\setminus\{x_{0}\}$ every convergent subsequence of
$f_{n}(y)$ converges to $x_{0}$ then we would be done. So assume
that this is not the case, take $y\in\bb{S}^{1}\setminus\{x_{0}\}$
such that the sequence of points $f_{n}(y)$ has a convergent
subsequence $f_{n_{k}}(y)$ converging to some point
$\tilde{y}\neq x_{0}$. Let $I$ be a small open interval around
$\tilde{y}$, not containing $x_{0}$ then since $G$ is continuously 3--transitive, there
exists a map $F_{\tilde{y}}\co I\to \bar{G}$ satisfying the
following,

\begin{enumerate}
\item{$F_{\tilde{y}}(x)(\tilde{y})=x$ for all $x\in I$}
\item{$F_{\tilde{y}}(\tilde{y})$ is the identity.}
\end{enumerate}

Let $g_{1},g_{2}\in \bar{G}$ satisfy $g_{1}(\tilde{y})=y_{0}$ and $g_{2}(y_{0})=y$ consider the
sequence,
$$
h_{k}=g_{1}\circ F_{\tilde{y}}(f_{n_{k}}(y))^{-1}\circ f_{n_{k}}\circ g_{2}
$$
of elements of $\bar{G}$. They all fix $y_{0}$, and since $g_{1}\circ
F_{\tilde{y}}(f_{n_{k}}(y))^{-1}$ converges to $g_{1}$ as $k\to\infty$ we have
the following.

\begin{enumerate}
\item{If $h_{k}$ contains a
subsequence $h_{k_{l}}$ such that there exists a homeomorphism $h$
with,
$$
\lim_{l\to\infty}h_{k_{l}}=h\hspace{.3cm}{\rm and}\hspace{.3cm}\lim_{l\to\infty}(h_{k_{l}})^{-1}=h^{-1}
$$
then so does $f_{n_{k}}$.}

\item{Furthermore, if there exist points $x',y'\in\bb{S}^{1}$ and a
subsequence $h_{k_{l}}$ of $h_{k}$ such that,
$$
\lim_{l\to\infty}h_{k_{l}}=x'\hspace{.3cm}{\rm and}\hspace{.3cm}\lim_{l\to\infty}(h_{k_{l}})^{-1}=y'
$$
uniformly on compact subsets of $\bb{S}^{1}\setminus\{y'\}$ and
$\bb{S}^{1}\setminus\{x'\}$ respectively, then so does $f_{n_{k}}$
($x'$ and $y'$ will be replaced by $g_{1}^{-1}(x')$ and
$g_{1}^{-1}(y')$).}
\end{enumerate}

Now, since $G_{0}$ is a convergence group, one of the above situations must occur. Consequently, $\bar{G}=\{g\in G:g(x_{0})=x_{0}\}$ is
a convergence group.
\end{proof}

\begin{prop}\label{prop:gcon}
If $G$ is a subgroup of \H which is continuously 3--transitive but not continuously 4--transitive then $G$ is a convergence group.
\end{prop}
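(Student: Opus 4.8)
The plan is to reduce the full group $G$ to the one-point stabilizer $\bar{G}=\{g\in G:g(x_{0})=x_{0}\}$, which is already known to be a convergence group by \fullref{prop:para}. This mirrors exactly the step carried out in \fullref{prop:para}, where the two-point stabilizer $G_{0}$ was promoted to the one-point stabilizer $\bar{G}$; here we promote $\bar{G}$ to $G$ by the same mechanism. So let $f_{n}$ be a sequence of distinct elements of $G$. Since $\bb{S}^{1}$ is compact, after passing to a subsequence we may assume $f_{n}(x_{0})\to z$ for some $z\in\bb{S}^{1}$, and the whole argument will consist of correcting $f_{n}$ by a convergent family of group elements so that it lands in $\bar{G}$.

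First I would build a continuous local correction near $z$. Since $G$ is continuously $3$--transitive it is in particular continuously $1$--transitive (by the downward implication in \fullref{lem:equ}), and applying continuous $1$--transitivity to the constant path at $z$ together with an injective path parametrizing a neighbourhood of $z$ yields a continuous map $F_{z}$, defined on a small arc $U\ni z$ and taking values in $G$, with $F_{z}(w)(z)=w$ for all $w\in U$ and $F_{z}(z)=\id$. Fix once and for all some $\gamma\in G$ with $\gamma(z)=x_{0}$, which exists by transitivity. For $k$ large we have $f_{n_{k}}(x_{0})\in U$, so I may set
$$
h_{k}=\gamma\circ F_{z}(f_{n_{k}}(x_{0}))^{-1}\circ f_{n_{k}}.
$$
Then $h_{k}(x_{0})=\gamma(z)=x_{0}$, so $h_{k}\in\bar{G}$, and by continuity of $F_{z}$ in the uniform topology the correction factor $\gamma\circ F_{z}(f_{n_{k}}(x_{0}))^{-1}$ converges in $d_{G}$ to $\gamma$ as $k\to\infty$, with inverses included.

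Next I would apply \fullref{prop:para} to the sequence $h_{k}\in\bar{G}$ to extract a subsequence $h_{k_{l}}$ satisfying one of the two convergence alternatives, and transfer that behaviour back to $f_{n_{k_{l}}}$ via
$$
f_{n_{k_{l}}}=\bigl(F_{z}(f_{n_{k_{l}}}(x_{0}))\circ\gamma^{-1}\bigr)\circ h_{k_{l}},
$$
where the parenthesised factor converges uniformly to $\gamma^{-1}$ together with its inverse. In the first alternative $h_{k_{l}}\to h\in\bar{G}$ uniformly with $h_{k_{l}}^{-1}\to h^{-1}$, and composing with the convergent correction gives $f_{n_{k_{l}}}\to\gamma^{-1}\circ h$ uniformly with the corresponding statement for inverses; since $\gamma^{-1}\circ h\in G$, alternative $1$ holds for $f_{n_{k_{l}}}$. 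In the second alternative $h_{k_{l}}$ collapses to a point $x'$ off $y'$ while $h_{k_{l}}^{-1}$ collapses to $y'$ off $x'$, and composing with the correction makes $f_{n_{k_{l}}}$ collapse to $\gamma^{-1}(x')$ uniformly on compact subsets of $\bb{S}^{1}\setminus\{y'\}$ and $f_{n_{k_{l}}}^{-1}$ collapse to $y'$ uniformly on compact subsets of $\bb{S}^{1}\setminus\{\gamma^{-1}(x')\}$, so alternative $2$ holds with collapse points $\gamma^{-1}(x')$ and $y'$. Since one of these always occurs, $G$ is a convergence group.

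The main obstacle will be the second (collapsing) alternative: I have to check that pushing the two-sided collapse of $h_{k_{l}}$ through the homeomorphism limit $\gamma^{-1}$ preserves the uniform-on-compacta convergence and correctly transforms the two exceptional points, exactly as in the corresponding step of \fullref{prop:para}. Concretely, for a compact $K\subset\bb{S}^{1}\setminus\{\gamma^{-1}(x')\}$ one must observe that $\bigl(F_{z}(f_{n_{k_{l}}}(x_{0}))\circ\gamma^{-1}\bigr)^{-1}(K)$ eventually lies in a fixed compact subset of $\bb{S}^{1}\setminus\{x'\}$, so that $h_{k_{l}}^{-1}$ may be applied to it uniformly; the forward direction is analogous and easier. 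Everything else is the bookkeeping already performed in \fullref{prop:para}, so the only genuinely new ingredient is the local section $F_{z}$ supplied by continuous $1$--transitivity.
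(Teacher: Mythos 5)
Your proof is correct and is essentially the paper's own argument: you correct the sequence $f_{n_{k}}$ by the uniformly convergent factor $\gamma\circ F_{z}(f_{n_{k}}(x_{0}))^{-1}$ (the paper's $g\circ F_{\tilde{x}}(f_{n_{k}}(x_{0}))^{-1}$) so that it lands in the point stabilizer $\bar{G}$, apply \fullref{prop:para}, and transfer both convergence alternatives back through the correction, which converges to $\gamma^{-1}$ together with its inverse. One minor remark: your bookkeeping in the collapsing alternative is in fact more precise than the paper's, whose parenthetical asserts that the repelling point becomes $g^{-1}(y')$, whereas (as you correctly argue via the compactness transfer) it remains $y'$ and only the attracting point is moved to $g^{-1}(x')$.
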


\begin{proof}
This proof is almost identical to the previous one but we write it out in full for clarity.

Choose $x_{0}\in\bb{S}^{1}$ and let $f_{n}$ be a sequence of elements of $G$. Then since $\bb{S}^{1}$ is compact,
the sequence of points $f_{n}(x_{0})$ will have a convergent
subsequence, $f_{n_{k}}(x_{0})$, converging to some point
$\tilde{x}$. Let $I$ be a small open interval around $\tilde{x}$,
then since $G$ is continuously 3--transitive, there exists a map
$F_{\tilde{x}}\co I\to G$ satisfying the following,

\begin{enumerate}
\item{$F_{\tilde{x}}(x)(\tilde{x})=x$ for all $x\in I$}
\item{$F_{\tilde{x}}(\tilde{x})$ is the identity.}
\end{enumerate}

Let $g\in G$ send $\tilde{x}$ to $x_{0}$ and consider the
sequence,
$$
h_{k}=g\circ F_{\tilde{x}}(f_{n_{k}}(x_{0}))^{-1}\circ f_{n_{k}}
$$
of elements of $G$. They all fix $x_{0}$, and since $g\circ
F_{\tilde{x}}(f_{n_{k}}(x_{0}))^{-1}$ converges to $g$ as $k\to\infty$ we have
the following.

\begin{enumerate}
\item{If $h_{k}$ contains a
subsequence $h_{k_{l}}$ such that there exists a homeomorphism $h$
with,
$$
\lim_{l\to\infty}h_{k_{l}}=h\hspace{.3cm}{\rm and}\hspace{.3cm}\lim_{l\to\infty}(h_{k_{l}})^{-1}=h^{-1}
$$
then so does $f_{n_{k}}$.}

\item{Furthermore, if there exist points $x',y'\in\bb{S}^{1}$ and a
subsequence $h_{k_{l}}$ of $h_{k}$ such that,
$$
\lim_{l\to\infty}h_{k_{l}}=x'\hspace{.3cm}{\rm and}\hspace{.3cm}\lim_{l\to\infty}(h_{k_{l}})^{-1}=y'
$$
uniformly on compact subsets of $\bb{S}^{1}\setminus\{y'\}$ and
$\bb{S}^{1}\setminus\{x'\}$ respectively, then so does $f_{n_{k}}$
($x'$ and $y'$ will be replaced by $g^{-1}(x')$ and
$g^{-1}(y')$).}
\end{enumerate}

Now, since $\bar{G}=\{g\in G:g(x_{0})=x_{0}\}$ is
a convergence group $G$ is too.
\end{proof}

We now look at the case where $G$ is continuously 4--transitive. In
this case, we show that $G$ must be $n$--transitive for every
$n\in\bb{N}$.

\begin{thm}\label{thm:ncts}
If $G$ is continuously $n$--transitive for $n\geq 4$, then it is
continuously $n+1$--transitive.
\end{thm}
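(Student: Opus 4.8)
The plan is to argue by contradiction: suppose $G$ is continuously $n$--transitive with $n\geq 4$ but is \emph{not} continuously $(n+1)$--transitive, and show that this collides with the standing hypothesis that $G$ is continuously $2$--transitive. By \fullref{prop:ind} it suffices to know that the stabilizer of some $n$ distinct points contains a nonconstant continuous deformation of the identity, and the failure of continuous $(n+1)$--transitivity says exactly that no such deformation exists, i.e. $J_{a_1\dots a_n}=\emptyset$ for every $(a_1,\dots,a_n)\in P_n$. Hence the entire analysis of Section~5 applies to the stabilizer $G_0$ of $n-1$ points $x_1,\dots,x_{n-1}$: by \fullref{cor:trans} the group $G_0=\widehat G_0$ is abstractly $\mathbb{R}$ and acts on each of the $n-1$ complementary components sharply (freely and transitively) as a one--parameter translation group, by \fullref{prop:dir} the directions of this action alternate from component to component, and by the corollary to \fullref{prop:dir} the integer $n$ is odd. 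If $n$ were even we would already have a contradiction, so I assume $n=2k+1$ with $k\geq 2$, giving $n-1=2k$ fixed points.

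Geometrically this says that the one--parameter group $G_0=\{g_s\}$ is a flow on $\mathbb{S}^1$ whose fixed--point set is exactly $\{x_1,\dots,x_{n-1}\}$, the points alternating between attracting and repelling as one runs around the circle (consistent precisely because there is an even number $2k$ of them). This is the dynamical picture of a hyperbolic one--parameter subgroup of $\mathrm{PSL}(2,\mathbb{R})$ lifted to the $k$--fold cyclic cover of the circle; indeed \fullref{cor:tran} already exhibits the restricted action as conjugate into $\mathrm{PSL}(2,\mathbb{R})$. My aim is to make this covering structure honest by producing its deck transformation: a homeomorphism $R$ of $\mathbb{S}^1$ of finite order $k>1$ that commutes with every element of the identity path--component $\widehat G$.

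To build $R$ I would exploit the extra symmetry that high transitivity forces on the fixed--point configuration. Since $G$ is $(n-1)$--transitive and the cyclic shift $x_i\mapsto x_{i+2}$ preserves both orientation and the attracting/repelling type of each point, some element of $G$ realizes it; a short computation from \fullref{prop:dir} (conjugation by a one--step shift sends $g_s\mapsto g_{-s}$, so a two--step shift centralizes $G_0$) shows that this element centralizes the flow and rotates the $2k$ fixed points by one same--type step. Promoting this to a globally defined, $G$--equivariant $R$ should follow the template of \fullref{thm:nempty}: one checks that the relevant data transported along $G$ are compatible, so that the pointwise recipe ``send $x$ to the next point of its type'' yields a well--defined homeomorphism commuting with $\widehat G$. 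Once such an $R\neq\id$ of finite order is in hand the contradiction is immediate: for any continuous deformation of the identity $f_t\in\widehat G$ fixing a point $x$ we get $f_t(Rx)=Rf_t(x)=Rx$, so $Rx$ lies in the complement $S_x=\mathbb{S}^1\setminus J_x$; but continuous $2$--transitivity forces $J_x=\mathbb{S}^1\setminus\{x\}$ by \fullref{thm:nempty}, so $Rx=x$ for all $x$ and $R=\id$. This contradiction shows that $G$ must after all be continuously $(n+1)$--transitive.

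The main obstacle is precisely the construction of $R$ together with the proof that it commutes with all of $\widehat G$, rather than merely normalizing $G_0$. Normalizing, and even centralizing, the single flow $G_0$ is easy from the alternation of directions, but upgrading this to a genuine $G$--equivariant deck transformation is delicate, since a priori the fixed--point data depend on the chosen $(n-1)$--tuple. I expect the resolution to mirror the equivariance argument for the rotation in \fullref{thm:nempty}, using $(n-1)$--transitivity to carry one flow to another, and invoking \fullref{cor:int} (through \fullref{prop:iso} and \fullref{prop:equ}) to exclude any nontrivial element that is the identity on an interval, which is what keeps the construction faithful and single--valued.
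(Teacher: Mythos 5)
Your reduction to odd $n$, the description of the stabilizer flow $G_{0}$, and the closing commutation argument are all sound, but the engine of your proof --- a globally defined, finite-order $R$ commuting with every element of $\widehat{G}$ --- is never constructed, and the recipe you sketch cannot be made well defined. The assignment ``send $x$ to the next fixed point of its type'' is attached to a chosen $(n-1)$--tuple, not to the point $x$: if $x$ sits in two tuples $A=(x,a_{2},\dots,a_{n-1})$ and $B=(x,b_{2},\dots,b_{n-1})$, the two candidate values $R_{A}(x)=a_{3}$ and $R_{B}(x)=b_{3}$ are unrelated, and by $(n-1)$--transitivity $b_{3}$ can be \emph{any} point of the appropriate arc, so the pointwise definition has no single value. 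The analogous construction in \fullref{thm:nempty} works only because $S_{x}$ is canonically attached to $x$, with no choices made; in your situation continuous $2$--transitivity gives $S_{x}=\{x\}$ for every $x$ (exactly the fact you invoke at the end), so the canonical candidate degenerates to the identity, and a nontrivial ``canonical finite set attached to each point'' is precisely what continuous $2$--transitivity forbids. In other words, the well-definedness you defer to the end is not a technical check but the heart of the matter, and it cannot be established as an intermediate step. (A smaller issue: your parenthetical claim that a one-step shift conjugates $g_{s}$ to $g_{-s}$ is unjustified --- conjugation only gives $g_{s}\mapsto g_{\mu s}$ for some $\mu$ whose sign is determined by \fullref{prop:dir} but whose modulus is not; for the two-step shift $h$ one can force $\mu=1$ because $h^{k}\in G_{0}$ and $G_{0}$ is abelian. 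But that only centralizes the single flow $G_{0}$, which is far weaker than commuting with all of $\widehat{G}$.)

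The paper's proof sidesteps any global object, and its device is the idea your argument is missing: because $n\geq 4$, one may stabilize only $n-2$ of the points, so the stabilizer $\bar{G}$ still acts $2$--transitively on each complementary component; running the convergence-group analysis on the circle obtained from $\bar{I}$ by identifying endpoints shows that $\bar{G}$ acts on each component $I$ as a conjugate of $\mathrm{Aff}(\bb{R})$ acting on $\bb{R}$. Now choose two distinct points $a_{n-1},a_{n-1}'$ in a different component $I'$. Their stabilizers $G_{0},G_{0}'$ inside $\bar{G}$ each act on $I$ as the full translation group (\fullref{cor:fix}, \fullref{prop:equ}), so one can pick $g\in G_{0}$ and $g'\in G_{0}'$ inducing the same translation on $I$; then $g^{-1}\circ g'$ is the identity on $I$ but is not globally trivial, since a nontrivial element of $\bar{G}$ fixes at most one point of $I'$. \fullref{cor:int} then yields continuous $(n+1)$--transitivity directly. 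This ``two stabilizers, same translation'' trick is the concrete replacement for the deck transformation you were hoping to build.
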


\begin{proof}
Fix $n\geq 4$ and assume for contradiction that $G$ is
continuously $n$--transitive but not continuously $n+1$--transitive.
Take $(a_{1},\dots,a_{n-2})\in P_{n-2}$ and define,
$$
\bar{G}=\{g\in G:g(a_{i})=a_{i}\hspace{.2cm}\forall i\}
$$
Let $I$ be a component of
$\bb{S}^{1}\setminus\{a_{1},\dots,a_{n-2}\}$. Construct a homomorphism $\Psi\co \bar{G}\to\mathrm{Homeo}(\bb{S}^{1})$ in the same way as $\Phi\co G_{0}\to\mathrm{Homeo}(\bb{S}^{1})$ was constructed in \fullref{sec:con}. Explicitly, take $g\in\bar{G}$, restrict it to a self homeomorphism of $\bar{I}$ and identify the endpoints to get an element of $\mathrm{Homeo}(\bb{S}^{1})$.

Let $\bar{\mathcal{G}}$ denote the image of $\bar{G}$ under $\Psi$. Then as in \fullref{prop:iso} $\bar{\mathcal{G}}$ is isomorphic to $\bar{G}$. Using the arguments from the earlier Propositions in this section we can show that $\bar{\mathcal{G}}$ is a convergence group and hence conjugate to a subgroup of $\mathrm{PSL}(2,\bb{R})$. On the other hand, $\bar{\mathcal{G}}$ is 2--transitive on $I$ and every element fixes the identification point. This means that the action of $\bar{G}$ on $I$ must be conjugate to the action of $\mathrm{Aff}(\bb{R})$ on $\bb{R}$.

Let $I$ and $I'$ be two components of
$\bb{S}^{1}\setminus\{a_{1},\dots,a_{n-2}\}$ and let $\phi\co I\to
\bb{R}$ be a homeomorphism which conjugates the action of
$\bar{G}$ on $I$ to the action of Aff($\bb{R}$) on $\bb{R}$. Let
$a_{n-1},a_{n-1}'$ be two distinct points in $I'$. Consider the
groups
$$
G_{0}=\{g\in\bar{G}:g(a_{n-1})=a_{n-1}\}
$$
and
$$
G_{0}'=\{g\in\bar{G}:g(a_{n-1}')=a_{n-1}'\}
$$
They each act transitively on $I$ and by \fullref{cor:fix}
and \fullref{prop:equ} without fixed points. Consequently,
$\phi$ conjugates both of these actions to the action of $\bb{R}$
on itself by translation. Let $g\in G_{0}$ and $g'\in G_{0}'$ be
elements which are conjugated to $x\mapsto x+1$ by $\phi$. Then
$g^{-1}\circ g'$ acts on $I$ as the identity. However, if it is
equal to the identity, then $g'=g$ fixes $a_{n-1}$ and $a_{n-1}'$,
this is impossible as non-trivial elements of $\bar{G}$ can have
at most one fixed point in $I'$. So $g^{-1}\circ g$ is a
non-trivial element of $G$ which acts as the identity on $I$ and
so by \fullref{cor:int} we have that $G$ is continuously
$n+1$--transitive.
\end{proof}

\section{Summary of Results}

\begin{thm}\label{thm:cat}
Let $G$ be a transitive subgroup of \H which contains a non
constant continuous path. Then one of the following mutually
exclusive possibilities holds:
\begin{enumerate}
\item{$G$ is conjugate to $\mathrm{SO}(2,\bb{R})$ in \H.}
\item{$G$ is conjugate to $\mathrm{PSL}(2,\bb{R})$ in \H.}
\item{For every $f\in$\H and each finite set of points $x_{1},\dots,x_{n}\in\bb{S}^{1}$ there exists $g\in G$ such that
$g(x_{i})=f(x_{i})$ for each $i$.}
\item{$G$ is a cyclic cover of a conjugate of $\mathrm{PSL}(2,\bb{R})$ in
\H and hence conjugate to $\mathrm{PSL}_{k}(2,\bb{R})$ for some
$k>1$.}
\item{$G$ is a cyclic cover of a group satisfying condition 3 above.}
\end{enumerate}
\end{thm}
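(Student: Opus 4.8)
The plan is to prove this as a synthesis of the structural results of the preceding sections, organised as a case analysis along two discrete dichotomies; almost every step is a direct appeal to an earlier result, and the only genuinely new work is bookkeeping. First I would pass from the hypothesis to the standing assumption of the paper. Given the non-constant continuous path $f\co[0,1]\to G$, the path $f_{t}=f(t)\circ f(0)^{-1}$ is a continuous deformation of the identity, and it is non-constant precisely because $f$ is. Since $G$ is transitive, hence $1$--transitive, \fullref{prop:one} then shows that $G$ is continuously $1$--transitive, so all the subsequent machinery is available.

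The first dichotomy is supplied by \fullref{lem:fin}: either $J_{x}=\emptyset$ for every $x\in\bb{S}^{1}$, or $J_{x}$ has finite complement for every $x$. In the former case \fullref{thm:so} gives directly that $G$ is conjugate to $\mathrm{SO}(2,\bb{R})$, which is possibility (1). In the latter case \fullref{thm:nempty} furnishes a continuously $2$--transitive group $H$ together with a finite-order rotation $R$ commuting with all of $G$: if $R$ is trivial then $H=G$ is itself continuously $2$--transitive, while if $R$ is non-trivial then $G$ is a non-trivial cyclic cover of the continuously $2$--transitive quotient $H=G_{\Gamma}$.

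The core of the proof is the analysis of the continuously $2$--transitive group $H$, carried out uniformly in both subcases. By \fullref{prop:ttt}, $H$ is automatically continuously $3$--transitive, and I would branch on whether it is continuously $4$--transitive. If it is not, then \fullref{prop:gcon} shows $H$ is a convergence group, so by \fullref{thm:gab} it is conjugate to a subgroup of $\mathrm{PSL}(2,\bb{R})$; because $H$ is $3$--transitive and $\mathrm{PSL}(2,\bb{R})$ is sharply $3$--transitive on the circle, this subgroup must be all of $\mathrm{PSL}(2,\bb{R})$. If $H$ is continuously $4$--transitive, then \fullref{thm:ncts} together with induction on $n$ shows $H$ is continuously $n$--transitive, and in particular $n$--transitive, for every $n$, which is exactly the matching property of possibility (3). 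Feeding these two outcomes back through the previous step gives possibilities (2) and (3) when $R$ is trivial, and possibilities (4) and (5) when $R$ is non-trivial, the cover being non-trivial (so $k>1$) because $R$ has order at least $2$.

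The remaining task is mutual exclusivity, which I expect to be the one point requiring a short fresh observation rather than a citation. The degree of continuous transitivity separates most cases at once: $\mathrm{SO}(2,\bb{R})$ is not $2$--transitive, $\mathrm{PSL}(2,\bb{R})$ is $3$-- but not $4$--transitive, and a group satisfying (3) is $n$--transitive for all $n$. To separate the cyclic-cover cases (4) and (5) from the uncovered cases (2) and (3), I would use that the central rotation $R$ commutes with every element of $G$, so each $g\in G$ sends $R$--orbits to $R$--orbits; hence the pair $(x,R(x))$ can only be carried to a pair of the form $(x',R(x'))$, and $G$ cannot be $2$--transitive. Thus a non-trivial cyclic cover is never conjugate to a $2$--transitive group, which disjoins (4),(5) from (2),(3) and completes the classification.
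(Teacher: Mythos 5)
Your proposal is correct and takes essentially the same route as the paper's own proof: pass to a continuous deformation of the identity and apply \fullref{prop:one}, split along the $J_{x}$ dichotomy via \fullref{thm:so} and \fullref{thm:nempty}, and then classify the continuously $2$--transitive group (or its quotient, in the cyclic-cover case) using \fullref{prop:ttt}, \fullref{prop:gcon} together with \fullref{thm:gab}, and \fullref{thm:ncts}. The one place you go beyond the paper is mutual exclusivity, which the paper's proof does not address at all; your central-rotation argument correctly disjoins (4),(5) from (2),(3), but a complete treatment would also need to separate (1) from (4),(5) (for instance because $\mathrm{SO}(2,\bb{R})$ is abelian while a cyclic cover of a continuously $2$--transitive group is not) and (4) from (5).
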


\begin{proof}
Let $f\co [0,1]\to G$ be a non constant continuous path. Then
$$
f(0)^{-1}\circ f\co [0,1]\to G
$$
is a continuous deformation of the
identity in $G$. Consequently, \fullref{prop:one} tells us that $G$ is continuously
1--transitive.

If $J_{x}=\emptyset$ for every $x\in\bb{S}^{1}$ then by \fullref{thm:so} $G$ is conjugate to $\mathrm{SO}(2,\bb{R})$ in \H. If
$J_{x}\neq\emptyset$ for some and hence all $x\in\bb{S}^{1}$ then
by \fullref{thm:nempty} $G$ is either continuously
2--transitive or is a cyclic cover of a group $G'$ which is
continuously 2--transitive.

So assume that $G$ is continuously 2--transitive, then by
\fullref{prop:ttt} it is continuously 3--transitive. If
moreover $G$ is not continuously 4--transitive, then by \fullref{prop:gcon} it is a convergence group and hence conjugate to a
subgroup of $\mathrm{PSL}(2,\bb{R})$. On the other hand, since $G$
is continuously 3--transitive, it is 3--transitive, and hence must
be conjugate to the whole of $\mathrm{PSL}(2,\bb{R})$.

If we now assume that $G$ is continuously 4--transitive then by
\fullref{thm:ncts} it is continuously $n$--transitive and hence
$n$--transitive for every $n\in\bb{N}$. So if we take $f\in$\H and
a finite set of points $x_{1},\dots,x_{n}\in\bb{S}^{1}$ there
exists $g\in G$ such that $g(x_{i})=f(x_{i})$ and we are done.
\end{proof}

\begin{thm}\label{thm:clo}
Let $G$ be a closed transitive subgroup of \H which contains a non
constant continuous path. Then one of the following mutually
exclusive possibilities holds:
\begin{enumerate}
\item{$G$ is conjugate to $\mathrm{SO}(2,\bb{R})$ in \H.}
\item{$G$ is conjugate to $\mathrm{PSL}_{k}(2,\bb{R})$ in \H for some $k\geq 1$.}
\item{$G$ is conjugate to $\mathrm{Homeo}_{k}(\bb{S}^{1})$ in \H for some $k\geq 1$.}
\end{enumerate}
\end{thm}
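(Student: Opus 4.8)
The plan is to feed the non--closed classification \fullref{thm:cat} into the hypothesis and to use closedness only to collapse the two ``finite--point matching'' alternatives. Cases~1, 2 and 4 of \fullref{thm:cat} already assert that $G$ is conjugate in $\mathrm{Homeo}(\bb{S}^{1})$ to $\mathrm{SO}(2,\bb{R})$, to $\mathrm{PSL}(2,\bb{R})=\mathrm{PSL}_{1}(2,\bb{R})$, or to $\mathrm{PSL}_{k}(2,\bb{R})$ with $k>1$, and these are precisely conclusions~1 and~2 (with $k\geq 1$); so for those cases nothing further is needed. It remains to treat case~3 and case~5, and in both the idea is that the matching property forces a density statement which, together with closedness, is upgraded to an equality.

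First I would dispose of case~3, where for every $f\in\mathrm{Homeo}(\bb{S}^{1})$ and every finite set $x_{1},\dots,x_{n}$ there is $g\in G$ with $g(x_{i})=f(x_{i})$. I claim this makes $G$ dense. Given $f$ and $\varepsilon>0$, use uniform continuity of $f$ and $f^{-1}$ to choose a cyclically ordered set $x_{1},\dots,x_{n}$ so fine that each arc $[x_{i},x_{i+1}]$ and each image arc $f([x_{i},x_{i+1}])$ has length $<\varepsilon$, and pick $g\in G$ with $g(x_{i})=f(x_{i})$. As $g$ is orientation preserving it carries $[x_{i},x_{i+1}]$ onto $f([x_{i},x_{i+1}])$, so $g$ and $f$ send every point into a common arc of length $<\varepsilon$, bounding $\sup_{x}d_{\bb{S}^{1}}(g(x),f(x))$; applying the same reasoning to $g^{-1}$ and $f^{-1}$ at the points $f(x_{i})$, where they agree, bounds $\sup_{x}d_{\bb{S}^{1}}(g^{-1}(x),f^{-1}(x))$, so $d_{G}(g,f)<\varepsilon$. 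Thus $G$ is dense, and since $G$ is closed we get $G=\mathrm{Homeo}(\bb{S}^{1})=\mathrm{Homeo}_{1}(\bb{S}^{1})$, which is conclusion~3.

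For case~5, write $G$ as a cyclic cover of a group $G_{\Gamma}$ satisfying the matching property of case~3, with $R$ the commuting homeomorphism of \fullref{thm:nempty}, conjugate to a rotation of some order $k$. Conjugating by the homeomorphism carrying $R$ to the standard rotation $R_{0}$ by $2\pi/k$ (which preserves closedness, transitivity and the cover structure), every element of $G$ commutes with $R_{0}$, so $G\subset C(R_{0})=\mathrm{Homeo}_{k}(\bb{S}^{1})$, the homeomorphisms descending to the $k$--fold quotient circle $C=\bb{S}^{1}/\langle R_{0}\rangle$. Let $p\co \mathrm{Homeo}_{k}(\bb{S}^{1})\to\mathrm{Homeo}(C)$ be this descent: it is the quotient by the finite central subgroup $\langle R_{0}\rangle$, hence a $k$--sheeted covering homomorphism, and $p(G)=G_{\Gamma}$, which is dense in $\mathrm{Homeo}(C)$ by the case~3 argument. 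The crucial step is that $p$, being a finite covering, is a closed map, so $p(G)$ is closed; being also dense it equals all of $\mathrm{Homeo}(C)$. Then $\mathrm{Homeo}_{k}(\bb{S}^{1})=p^{-1}(\mathrm{Homeo}(C))=G\cdot\langle R_{0}\rangle$, so $G$ has finite index (at most $k$) in $\mathrm{Homeo}_{k}(\bb{S}^{1})$. Since $\mathrm{Homeo}_{k}(\bb{S}^{1})$ is path connected---lift $h$ to an increasing $\widetilde h\co \bb{R}\to\bb{R}$ with $\widetilde h(x+2\pi/k)=\widetilde h(x)+2\pi/k$ (the degree one condition forces this normalization) and run the straight--line homotopy $(1-t)\widetilde h+t\,\id$, which stays increasing and still commutes with translation by $2\pi/k$---a closed finite--index subgroup is clopen and therefore everything. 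Hence $G=\mathrm{Homeo}_{k}(\bb{S}^{1})$ after the conjugation, i.e.\ $G$ is conjugate to $\mathrm{Homeo}_{k}(\bb{S}^{1})$, which is conclusion~3.

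The main obstacle will be the topological bookkeeping in case~5 rather than any new group theory. Because $\mathrm{Homeo}(\bb{S}^{1})$ is not locally compact one cannot simply invoke ``proper maps are closed''; instead I would argue directly that the finite covering $p$ is a closed map, using that $p$ is a local homeomorphism with finitely many sheets, so that a sequence $a_{n}\in G$ with $p(a_{n})$ convergent has, after passing to a subsequence lying in a single evenly covered sheet, a limit in $G$. The other point needing care is the explicit homotopy establishing connectedness of $\mathrm{Homeo}_{k}(\bb{S}^{1})$; everything else reduces to \fullref{thm:cat} and the density estimate.
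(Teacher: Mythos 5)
Your proposal is correct, and on the ground it shares with the paper it follows the same route: feed $G$ into \fullref{thm:cat}, note that cases 1, 2, 4 are already the desired conclusions, and handle case 3 by showing the finite--point matching property forces density, so that closedness gives $G=\mathrm{Homeo}(\bb{S}^{1})$. (Your density estimate is the quantitative version of what the paper merely asserts with its countable dense set $\{a_{n}\}$; in particular you also control the inverse part of the metric $d_{G}$, which the paper's one--line "then $g_{n}$ will converge uniformly to $f$" passes over.) Where you genuinely diverge is case 5: the paper's proof dismisses it entirely, saying only "it remains to show that if $G$ satisfies condition 3 \dots then its closure is \H," and never explains why a \emph{closed} cyclic cover of a group with the matching property must be conjugate to $\mathrm{Homeo}_{k}(\bb{S}^{1})$. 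This is a real step, not a formality: closedness of $G$ upstairs does not tautologically give closedness of the quotient group $G_{\Gamma}$ downstairs, and surjectivity of $p(G)$ onto $\mathrm{Homeo}(\bb{S}^{1}/\langle R_{0}\rangle)$ does not by itself force $G$ to be all of $p^{-1}(\mathrm{Homeo}(\bb{S}^{1}/\langle R_{0}\rangle))$ unless one knows $\langle R_{0}\rangle\subset G$ or argues as you do. Your chain --- the descent homomorphism $p$ is a $k$--sheeted covering whose deck group acts by isometries with uniformly separated orbits, hence is a closed map by the subsequence-in-a-single-sheet argument (valid since both groups are metrizable); therefore $p(G)$ is closed and dense, so equals $\mathrm{Homeo}(\bb{S}^{1}/\langle R_{0}\rangle)$; therefore $G$ has index at most $k$ in $\mathrm{Homeo}_{k}(\bb{S}^{1})$, is closed hence open, and equals everything because the straight--line homotopy on normalized lifts shows $\mathrm{Homeo}_{k}(\bb{S}^{1})$ is path connected --- is exactly the missing bookkeeping, and each step checks out (including the identification of the centralizer of $R_{0}$ with $\mathrm{Homeo}_{k}(\bb{S}^{1})$, and the fact that uniform convergence of circle homeomorphisms to a homeomorphism forces uniform convergence of inverses, needed for the homotopy to be a path in the $d_{G}$ metric). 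So your write-up is not just correct but strictly more complete than the paper's own proof of this theorem.
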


\begin{proof}
Since $G$ is a transitive subgroup of \H which contains a non
constant continuous path, \fullref{thm:cat} applies. It
remains to show that if $G$ satisfies condition 3 in \fullref{thm:cat} then its closure is \H.

To see this, let $f$ be an arbitrary element of
$\mathrm{Homeo}(\bb{S}^{1})$. If we can find a sequence of
elements of $G$ which converges uniformly to $f$ then we shall be
done. So let $\{a_{n}:n\in\bb{N}\}$ be a countable and dense set
of points in $\bb{S}^{1}$. Choose a sequence of maps $g_{n}\in G$
so that $g_{n}(a_{k})=f(a_{k})$ for $1\leq k\leq n$. Then $g_{n}$
will converge uniformly to $f$ so that the closure of $G$ will
equal \H.
\end{proof}

\begin{thm}
$\mathrm{PSL}(2,\bb{R})$ is a maximal closed subgroup of \H.
\end{thm}

\begin{proof}
Let $G$ be a closed subgroup of \H containing
$\mathrm{PSL}(2,\bb{R})$. Then $G$ is 3--transitive and by applying
\fullref{thm:clo} we can see that \H and
$\mathrm{PSL}(2,\bb{R})$ are the only possibilities for $G$.
\end{proof}

\newpage
\bibliographystyle{gtart}
\bibliography{link}

\begin{thebibliography}{}
\providecommand\bibmarginpar{\leavevmode\marginpar}
\def\urlstyle#1{{\tt #1}}

\bibitem{BE}
\textbf{M Bestvina}, \emph{Questions in geometric group theory}
\ Available at \setbox0\hbox{\makeatletter\@url
{http://www.math.utah.edu/~bestvina/}}
\href{http://www.math.utah.edu/~bestvina/}
{\unhbox0}

\bibitem{CJ}
\textbf{A Casson}, \textbf{D Jungreis}, \emph{Convergence groups and {S}eifert
  fibered 3--manifolds}, Invent. Math. 118 (1994) 441--456 \xox{MR}{1296353}

\bibitem{GA}
\textbf{D Gabai},
  \href{http://links.jstor.org/sici?sici=0003-486X(199211)2:136:3%3C447:CGAFG%%
3E2.0.CO%3B2-9} {\emph{Convergence groups are {F}uchsian groups}}, Ann. of
  Math. $(2)$ 136 (1992) 447--510 \xox{MR}{1189862}

\bibitem{GM}
\textbf{F\,W Gehring}, \textbf{G\,J Martin}, \emph{Discrete quasiconformal
  groups I}, Proc. London Math. Soc. $(3)$ 55 (1987) 331--358 \xox{MR}{896224}

\bibitem{GH}
\textbf{{\'E} Ghys}, \emph{Groups acting on the circle}, Enseign. Math. $(2)$
  47 (2001) 329--407 \xox{MR}{1876932}

\bibitem{HI}
\textbf{A Hinkkanen},
  \href{http://links.jstor.org/sici?sici=0002-9947(199003)318:1%3C87:AANCGO%3E%
2.0.CO%3B2--X} {\emph{Abelian and nondiscrete convergence groups on the
  circle}}, Trans. Amer. Math. Soc. 318 (1990) 87--121 \xox{MR}{1000145}

\bibitem{TU}
\textbf{P Tukia}, \emph{Homeomorphic conjugates of {F}uchsian groups}, J. Reine
  Angew. Math. 391 (1988) 1--54 \xox{MR}{961162}

\end{thebibliography}

\end{document}